\documentclass[11pt, twoside, letterpaper]{amsart}
\usepackage{amsmath, hyperref, color}

\addtolength{\hoffset}{-1.9cm}
\addtolength{\textwidth}{3.8cm}
\addtolength{\voffset}{-0.7cm}
\addtolength{\textheight}{1.4cm}

\vfuzz2pt 
\hfuzz2pt 
\newtheorem{thm}{Theorem}[section]
\newtheorem{cor}[thm]{Corollary}
\newtheorem{lem}[thm]{Lemma}
\newtheorem{prop}[thm]{Proposition}

\theoremstyle{definition}
\newtheorem{defn}[thm]{Definition}

\newtheorem{ass}[thm]{Assumption}

\theoremstyle{remark}
\newtheorem{rem}[thm]{Remark}

\numberwithin{equation}{section}

\newcommand{\set}[1]{\left\{#1\right\}}
\newcommand{\Real}{\mathbb R}
\newcommand{\Natural}{\mathbb N}
\newcommand{\R}{\mathbb{R}}
\newcommand{\N}{\mathbb{N}}

\newcommand{\such}{\ | \ }
\newcommand{\nin}{n \in \Natural}
\newcommand{\prob}{\mathbb{P}}

\newcommand{\Exp}{\mathcal E}
\newcommand{\cE}{\mathcal{B}_E}
\newcommand{\FF}{\bF}

\newcommand{\PP}{\mathbb{P}}
\newcommand{\QQ}{\mathbb{Q}}
\newcommand{\EE}{\mathbb{E}}

\newcommand{\qprob}{\mathbb{Q}}

\newcommand{\expec}{\mathbb{E}}

\newcommand{\probtriple}{(\Omega, \mathcal{F}, \prob)}

\newcommand{\basispwf}{(\Omega, \, \F, \, \bF, \, \prob)}
\newcommand{\basis}{(\Omega, \, \bF)}
\newcommand{\basisg}{(\Omega, \, \bG)}

\newcommand{\basisp}{(\Omega, \, \bF, \, \prob)}

\newcommand{\basisgp}{(\Omega, \, \bG, \, \prob)}

\newcommand{\basisq}{(\Omega, \, \bF, \, \qprob)}

\newcommand{\F}{\mathcal{F}}
\newcommand{\G}{\mathcal{G}}

\newcommand{\cadlag}{c\`adl\`ag}

\newcommand{\ud}{\mathrm d}

\newcommand{\cB}{\mathcal{B}}
\newcommand{\cO}{\mathcal{O}}
\newcommand{\cP}{\mathcal{P}}


\newcommand{\limn}{\lim_{n \to \infty}}

\newcommand{\zi}{{\Real_+}}

\newcommand{\e}{\mathrm{e}}

\newcommand{\cS}{{}^\mathsf{c} \kern-0.21em S}
\newcommand{\cH}{{}^\mathsf{c} \kern-0.23em H}
\newcommand{\cMM}{{}^\mathsf{c} \kern-0.19em [M, M]}

\newcommand{\pare}[1]{\left(#1\right)}
\newcommand{\bra}[1]{\left[#1\right]}
\newcommand{\dbra}[1]{[\kern-0.15em[ #1 ]\kern-0.15em]}
\newcommand{\dbraco}[1]{[\kern-0.15em[ #1 [\kern-0.15em[}
\newcommand{\dbraoc}[1]{]\kern-0.15em] #1 ]\kern-0.15em]}
\newcommand{\dbraoo}[1]{]\kern-0.15em] #1 [\kern-0.15em[}

\newcommand{\Xhat}{\widehat{X}}

\newcommand{\oF}{\overline{\F}}

\newcommand{\X}{\mathcal{X}}

\newcommand{\bF}{\mathbf{F}}

\newcommand{\Lb}{\mathbb{L}}

\newcommand{\dfn}{\, := \,}

\newcommand{\lz}{\Lb^0}
\newcommand{\lzp}{{\lz_+}}

\newcommand{\bG}{\mathbf{G}}
\newcommand{\GG}{\mathbf{G}}
\newcommand{\indic}{\mathbb{I}}
\newcommand{\ind}{\mathbb{I}}

\newcommand{\iii}{{i \in \set{1, \ldots, d}}}

\newcommand{\tir}{t \in \Real_+}
\newcommand{\Y}{\mathcal{Y}}

\newcommand{\Yfq}{\Y(\bF, S, \qprob)}

\newcommand{\Yfp}{\Y(\bF, S, \prob)}
\newcommand{\Ygp}{\Y(\bG, S^\tau, \prob)}

\newcommand{\naone}{NA$_1$}
\newcommand{\naonef}{NA$_1 (\bF, S)$}
\newcommand{\naoneg}{NA$_1 (\bG, S^\tau)$}

\newcommand{\be}{\begin{equation}}
\newcommand{\ee}{\end{equation}}
\newcommand{\ba}{\begin{aligned}}
\newcommand{\ea}{\end{aligned}}

\newcommand{\lsi}{\left[\negthinspace\left[\right.\right.\!}

\newcommand{\pj}{\gamma}
\newcommand{\oFF}{\overline{\FF}}
\newcommand{\Stilde}{\widetilde{S}}

\begin{document}

\title[Arbitrage of the first kind and filtration enlargements in semimartingale models]{Arbitrage of the first kind and filtration enlargements in semimartingale financial models}%

\author{Beatrice Acciaio}%
\address{Beatrice Acciaio, Statistics Department, London School of Economics and Political Science}%
\email{b.acciaio@lse.ac.uk}%

\author{Claudio Fontana}%
\address{Claudio Fontana, Laboratoire de Probabilit\'es et Mod\`eles Al\'eatoires, Paris Diderot University}%
\email{fontana@math.univ-paris-diderot.fr}%
\thanks{The research of the second author was supported by a Marie Curie Intra European Fellowship within the 7th European Community Framework Programme under grant agreement PIEF-GA-2012-332345.}%

\author{Constantinos Kardaras}%
\address{Constantinos Kardaras, Statistics Department, London School of Economics and Political Science}%
\email{k.kardaras@lse.ac.uk}%

\subjclass[2010]{60G44, 91G10}
\keywords{Progressive enlargement of filtrations; initial enlargement of filtrations; arbitrage of the first kind; martingale deflator}%

\date{\today}%


\begin{abstract}
In a general semimartingale financial model, we study the stability of the \emph{No Arbitrage of the First Kind} (\naone) (or, equivalently, \emph{No Unbounded Profit with Bounded Risk}) condition under initial and under progressive filtration enlargements. In both cases, we provide a simple and general condition which is sufficient to ensure this stability for \emph{any fixed} semimartingale model. Furthermore, we give a characterisation of the \naone\ stability for \emph{all} semimartingale models.
\end{abstract}

\maketitle

\section*{Introduction}

In financial mathematics, market models with different sets of information have been widely studied, especially in relation to insider trading and credit risk modeling (see e.g. \cite{JYC09} and the references therein). Typically, one starts by postulating a model with respect to a given information set and then enlarges that set with some additional information not originally present in the market. From a mathematical point of view, this corresponds to considering an \emph{enlargement} of the original filtration on a given filtered probability space. Since the model aims at representing a financial market, a fundamental question is whether the additional information allows for arbitrage profits.

The present paper aims at answering the above question in the context of models driven by general semimartingales, both in the case where the additional information is added in a progressive way through time, and in the case where the additional information is fully added at the initial time. Referring to the terminology of the theory of enlargement of filtrations (see \cite{MR604176} for a complete account of the theory and \cite[\S~5.9]{JYC09} and \cite[Ch.~VI]{MR1037262} for a presentation of the main results), this corresponds to considering a filtration obtained as a \emph{progressive} or as an \emph{initial} enlargement, respectively, of the original filtration.

Our analysis focuses on the \emph{No Arbitrage of the First Kind} (\naone) condition (see \cite{Kar_09_fin_add_ftap}), which is equivalent to the \emph{No Unbounded Profit with Bounded Risk} (NUPBR) condition (see \cite[Proposition 1]{Kar_09_fin_add_ftap}). Mathematically, condition \naone \ is equivalent to existence of strictly positive local martingale deflators, and can be shown to be the minimal condition ensuring the well-posedness of expected utility maximisation problems (see \cite[Proposition 4.19]{MR2335830}). In the case of a progressive enlargement with respect to a random time $\tau$, we study the stability of \naone\ on the random time horizon $[0,\tau]$, showing that the existence of arbitrages of the first kind in the enlarged filtration is crucially linked to the possibility of the asset-price process exhibiting a jump at the same time when a particular nonnegative local martingale in the original filtration jumps to zero.
In turn, we show that the possibility of the latter event is intimately related to how local martingales from the original filtration behave in the enlarged filtration, up to a suitable normalisation. 
In the case of an initial enlargement of the original filtration, and under the classical density hypothesis of \cite{Jac85}, we establish an analogous set of results, showing that the validity of \naone\ in the enlarged filtration is linked to the possibility of the asset-price process jumping at the same time when a family of nonnegative martingales in the original filtration jumps to zero. In turn, as in the case of progressive enlargements, the latter possibility also fully characterises how local martingales from the original filtration behave in the enlarged filtration, up to a suitable normalisation.
 
In both cases of progressive and of initial enlargement, these results allow us to provide an easy sufficient condition ensuring the \naone\ stability for \emph{a fixed} semimartingale model, as well as to explicitly characterise the stability of \naone\ for \emph{all} semimartingale models. Although absent in the statements of our main results, an inspection of their proofs reveals a hands-on approach to the problem: using local martingale deflators in the original filtration, we explicitly construct local martingale deflators in the enlarged filtration in order to show validity of condition \naone.
In the process, we obtain some interesting new results on progressive as well as initial filtration enlargement, showing how the super/local martingale property of a process can be transferred from the original filtration to the enlarged one by suitably deflating the process.

For progressive filtration enlargement with respect to an honest time $\tau$ (see \cite[Ch.~VI]{MR1037262}), examples of arbitrage profits are provided in \cite{Imk02}, \cite{Zw07} and \cite{FJS}. In the context of continuous semimartingale models, as shown in \cite[Theorem 4.1]{FJS} (see also \cite[Lemma 6.7]{Kre13}), condition  \naone\ is always valid in the enlarged filtration on the random time horizon $[0,\tau]$. In the case of general semimartingale models, this is no longer true, see the example in \S~\ref{subsubsec: inacc}. 
In that context, the recent paper \cite{Ch_et_al_13} addresses the issue of \naone\ stability in progressively enlarged filtrations and represents one of the sources of inspiration for the present work. 
In particular, the key role of conditions equivalent to those given in Theorem~\ref{thm: NA1_G} and Remark~\ref{rem: NA1_G} has been first pointed out and proved in \cite{Ch_et_al_13} (see Remark~\ref{rem:comp_bis}) and the characterisation we obtain in Theorem~\ref{thm:gen_progr} turns out to be equivalent to the one already established in \cite{Ch_et_al_13} (see Remark~\ref{rem:comp}).
However, in comparison with the latter paper, we follow here a totally different approach and provide original and rather simple proofs to those results, avoiding the use of the compensated stochastic integral (see e.g. \cite[Definition~9.7]{MR1219534}) and, somewhat surprisingly, not relying on the classical Jeulin-Yor decomposition formula (see \cite[Proposition 4.16]{MR604176}). In contrast, we exploit the properties of an optional decomposition of the Az\'ema supermartingale associated to $\tau$ recently established in \cite{Kar_14}.
We also want to mention that, in the case of the classical \emph{No Free Lunch with Vanishing Risk} (NFLVR) condition (see \cite{MR1304434,MR1671792}), a study of its stability and of the relation with the preservation of the martingale property in progressively enlarged filtrations has been carried out in \cite{CJN12}.

In the initial filtration enlargement case, the possibility of realising arbitrage profits in the enlarged filtration has been studied in \cite{GP98}, \cite{MR1831271} and \cite{IPW01}, among others. Concerning the classical NFLVR condition, it is well-known that it is stable under an initial enlargement with respect to a random variable $J$ if the conditional law of $J$ for all times is equivalent to the unconditional one (see e.g. \cite{GP98}). However, to the best of our knowledge, the issue of \naone\ stability with respect to an initial enlargement has never been studied so far.
Interestingly, we show that both the progressive and the initial case can be treated by relying on the same methodological approach.

The paper is organised as follows. Section~\ref{sect.main} contains the framework and statements of our main results. 
In Section~\ref{sect.progr} we consider progressive enlargement of filtrations. We study the crucial stopping times that will be then used to pinpoint local martingales and to prove stability of the \naone\ condition in the enlarged filtrations.
In Section~\ref{sect.init} we perform the same analysis and obtain analogous results, \emph{mutatis mutandis}, in the case of initially enlarged filtrations.

\section{Main Results}\label{sect.main}

\subsection{Probabilistic set-up} \label{subsec:prob_set_up}
In all that follows, we work on a filtered probability space $\basispwf$, where $\bF = (\F_t)_{\tir}$ is a filtration satisfying the usual hypotheses of right-continuity and saturation by $\prob$-null sets. In general, $\F_\infty \subseteq \F$ holds, with the last set-inclusion being potentially strict.

We shall be using standard notation from the general theory of stochastic processes. For any unexplained notation and results, the reader can consult \cite{MR1219534} or \cite{MR1943877}.

\subsection{The market model} \label{subsec:market_model}
Fix $d \in \Natural = \set{1, 2, \ldots}$, and let $S \equiv (S^i)_\iii$ be a collection of nonnegative semimartingales on $\basisp$ 
\footnote{We want to mention that the nonnegativity assumption is not crucial for the following results to hold, provided that the notion of local martingale is suitably replaced by the notion of sigma-martingale (see \cite{MR1671792} and \cite{Tak_13}).}. 
Each $S^i$, $\iii$, models the price process of an asset, discounted by a baseline security in the market. Starting with initial capital $x \in [0, \infty)$ and following a $d$-dimensional, $\bF$-predictable and $S$-integrable strategy $H$, an investor's discounted wealth process is given by $X^{x, H} \dfn x + \int_0^\cdot \pare{H_t, \ud S_t}$. It should be noted that we are using vector stochastic integration throughout. Define $\X(\bF, S)$ to be the class of all \emph{nonnegative} processes $X^{x, H}$ in the previous notation. (In the definition of the class $\X(\bF, S)$, the initial capital $x \in [0, \infty)$ and $d$-dimensional, $\bF$-predictable and $S$-integrable strategies $H$ are arbitrary, as long as $X^{x, H} \geq 0$.)

\begin{defn}	\label{def:A1}
For $T \in (0 ,\infty)$, an \textsl{arbitrage of the first kind} with information $\bF$ and assets $S$ on $[0, T]$ is $\chi_T \in \lzp (\F_T)$ with $\prob \bra{\chi_T > 0} > 0$ and with the property that for all $x \in (0, \infty)$ there exists $X \in \X(\bF, S)$ with $X_0 = x$ (where the wealth process $X$ may depend on $x$) such that $\prob \bra{X_T \geq \chi_T} = 1$. If no arbitrage of the first kind with information $\bF$ and assets $S$ exists on any interval $[0, T]$ for $T \in (0 ,\infty)$, we say that condition \naonef \ holds.
\end{defn}

Whenever $\qprob \sim \prob$, we use $\Yfq$ to denote the class of all strictly positive $\bF$-adapted \cadlag \ processes $Y$ with $Y_0 = 1$, such that $Y$ and $Y S$ are local martingales on $\basisq$. The elements in $\Yfq$ are called \emph{strictly positive local martingale deflators} (for $S$ on $\basisq$). When strict positivity is replaced by nonnegativity, we simply talk of \emph{local martingale deflators}. If $Y^{\qprob}$ denotes the density process of $\qprob$ with respect to $\prob$, note that $\Yfq = \big \{ Y {(Y^\qprob_0 / Y^\qprob)}  \such Y \in \Yfp \big\}$ holds.
It comes as a consequence of \cite[Theorem 2.6]{Tak_13} that condition \naonef \ is equivalent to $\Yfq \neq \emptyset$ (where, of course, $\qprob \sim \prob$ is arbitrary). For our purposes (see Remark~\ref{rem:explanation} below), we need a more precise statement.

\begin{thm} \label{thm: num_loc_mart}
Condition \emph{\naonef} holds if and only if there exist $\qprob \sim \prob$ and strictly positive $\Xhat \in \X(\bF, S)$ such that $(1 / \Xhat) \in \Yfq$.
\end{thm}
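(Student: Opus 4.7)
The ``if'' direction is immediate: existence of $\qprob \sim \prob$ and a strictly positive $\hat X \in \X(\bF, S)$ with $1/\hat X \in \Yfq$ trivially gives $\Yfq \neq \emptyset$, and the equivalence \naonef\ $\iff \Yfq \neq \emptyset$ recalled just above the theorem (a consequence of \cite[Theorem~2.6]{Tak_13}) then yields \naonef.

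For the ``only if'' direction, my plan is as follows. Under \naonef, the same result of Takaoka guarantees $\Yfp \neq \emptyset$; fix any $Y \in \Yfp$. The strategy is to choose $\qprob$ whose density process with respect to $\prob$ is $Z := Y \hat X$, for an $\hat X \in \X(\bF, S)$ still to be constructed (strictly positive, with $\hat X_0 = 1$): Bayes' rule then gives $1/\hat X \in \Yfq$ as a direct consequence of the $\prob$-local-martingale properties of $Y = (1/\hat X)\, Z$ and $YS = (S/\hat X)\, Z$. So, provided $Z = Y\hat X$ turns out to be a uniformly integrable, strictly positive $\prob$-martingale with $Z_\infty > 0$ $\prob$-a.s., setting $\ud\qprob/\ud\prob := Z_\infty$ produces $\qprob \sim \prob$ for which $1/\hat X \in \Yfq$. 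A first technical observation is that, for any strictly positive $\hat X \in \X(\bF, S)$ with $\hat X_0 = 1$ and representation $\hat X = 1 + \int (H,\, \ud S)$, a direct integration by parts together with the product formula for the vector process $YS$ yields
\[
Y \hat X = 1 + \int_0^\cdot \bigl( \hat X_{t-} - \inner{H_t}{S_{t-}} \bigr)\, \ud Y_t + \int_0^\cdot \bigl( H_t,\, \ud (YS)_t \bigr),
\]
so that $Y\hat X$ is automatically a strictly positive $\prob$-local martingale starting at~$1$, being a sum of stochastic integrals against the $\prob$-local martingales $Y$ and $YS$.

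The main obstacle is therefore to select $\hat X \in \X(\bF, S)$ so that the positive local martingale $Y \hat X$ is a genuine uniformly integrable $\prob$-martingale with strictly positive terminal value. For a generic $Y \in \Yfp$ neither property is automatic: $Y$ itself may well fail to be UI or to converge to a strictly positive limit, so $\hat X$ must be produced within the constrained class of wealth processes in order to compensate. The natural candidate is (a suitable modification of) the num\'eraire portfolio supplied by \naonef\ \`a la Karatzas--Kardaras, combined with a localization-and-patching argument to obtain a global UI density; once such an $\hat X$ is in hand, the Bayes computation above then closes the loop.
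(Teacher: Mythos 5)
Your ``if'' direction is fine, and so is the bookkeeping in the ``only if'' direction: the product formula showing that $Y \Xhat$ is a strictly positive $\prob$-local martingale for any strictly positive $\Xhat = 1 + \int (H, \ud S) \in \X(\bF,S)$, and the Bayes argument converting the $\prob$-local martingale property of $Y$ and $YS$ into $(1/\Xhat) \in \Yfq$ \emph{once} $Y\Xhat$ is known to be the density process of some $\qprob \sim \prob$. The genuine gap is precisely the step you defer to the end: producing $\Xhat$ so that $Y\Xhat$ is a uniformly integrable $\prob$-martingale with $\prob$-a.s.\ strictly positive limit. Worse, the plan as you state it --- \emph{first} fix an arbitrary $Y \in \Yfp$, \emph{then} build $\Xhat$ --- cannot work. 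Take $d=1$ and $S \equiv 1$ on a space carrying a Brownian motion $W$: every element of $\X(\bF,S)$ is a nonnegative constant, while $\Yfp$ contains $Y = \exp(W_\cdot - \cdot/2)$, which converges to $0$ almost surely; for this $Y$ there is no admissible $\Xhat$ whatsoever making $Y\Xhat$ a uniformly integrable martingale with strictly positive terminal value (the theorem itself is of course trivially true here with $\qprob = \prob$, $\Xhat \equiv 1$ --- it is your strategy that breaks, not the statement). The deflator and the wealth process must be constructed \emph{jointly}, and that joint construction (num\'eraire portfolio arguments plus the passage from finite horizons to a single global equivalent measure, your ``localization-and-patching'') is exactly the nontrivial content of the proof of \cite[Theorem 2.6]{Tak_13}; gesturing at it does not close the loop.

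For comparison, the paper does not reprove anything: it observes that the proof of \cite[Theorem 2.6]{Tak_13} already delivers, under \naonef, a pair $\qprob \sim \prob$ and a strictly positive $\Xhat \in \X(\bF,S)$ with $Y^\qprob/(\Xhat Y^\qprob_0) \in \Yfp$, where $Y^\qprob$ is the density process of $\qprob$; the identity $\Yfq = \big\{ Y (Y^\qprob_0/Y^\qprob) \such Y \in \Yfp \big\}$ recalled in \S~\ref{subsec:market_model} then gives $(1/\Xhat) \in \Yfq$ at once. So either cite that proof as the paper does, or genuinely carry out the simultaneous construction of $(\qprob, \Xhat)$ --- which is the whole difficulty of the theorem, not a routine afterthought.
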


Note that, even though the statement of Theorem~\ref{thm: num_loc_mart} is sharper than~\cite[Theorem 2.6]{Tak_13}, it actually follows from the proof of the latter. Indeed,~\cite{Tak_13} prove that \naonef~implies the existence of $\qprob \sim \prob$ and strictly positive $\Xhat \in \X(\bF, S)$ such that $Y^{\qprob}/(\Xhat Y^{\qprob}_0)\in\Yfp$, with $Y^{\qprob}$ denoting the density process of $\qprob$ with respect to $\prob$.

The main purpose of the paper is the study of stability of the \naone\ condition when enlarging the filtration $\bF$ in a progressive or initial way. Naturally, the first issue to be settled is the preservation of the semimartingale property of processes, which is typically referred to in the literature as the $\mathcal{H}'$-hypothesis. In the case of progressive filtration enlargement by a random time $\tau$, it comes as a consequence of the Jeulin-Yor theorem that this always holds up to time $\tau$ (and that for honest times it holds on all $[0,\infty)$); see \cite{MR519998}. 
For the case of initial filtration expansion, one well-known situation where the preservation of the semimartingale property holds is when Jacod's density hypothesis is satisfied; see \cite{Jac85}. We want to remark that these facts will also come as consequences of our analysis in Section \ref{sect.progr} for the progressive enlargement case (see Corollary \ref{cor:H_prog}) and Section \ref{sect.init} for the initial enlargement case (see Remark \ref{rem:H_init}).

\begin{rem}	\label{rem:explanation}
Theorem~\ref{thm: num_loc_mart} will play a key role in the proof of our main theorems. In fact, it shows that \naone$(\bF,S)$ is equivalent to the existence of  $Y\in\Yfq$ such that $\set{\Delta S\neq0}=\set{\Delta Y\neq0}$, for some $\qprob \sim \prob$. As shown below (see Sections~\ref{subsec:proof_main_prog} and \ref{subsec:proofs_initial}), this property turns out to be crucial in order to construct local martingale deflators in enlarged filtrations starting from local martingale deflators from the original filtration (compare also with  Remarks \ref{rem:NA1_progr_takaoka} and \ref{rem:NA1_init_takaoka}).
\end{rem}

\subsection{Main results under progressive filtration enlargement}\label{subsec:main-progr}

We first study the stability of the \naone\ condition under a progressive enlargement of the filtration $\bF$ with respect to  an $\F$-measurable random time $\tau : \Omega \mapsto [0, \infty]$ such that $\prob \bra{\tau = \infty} = 0$. (We refer the reader to \cite[Chapter VI]{MR1037262} for a textbook account of the theory of enlargement of filtrations.) The progressively enlarged filtration $\bG = (\G_t)_{\tir}$ is defined via
\begin{equation}	\label{Gfiltr}
\G_t = \set{B \in \F \such B \cap \set{\tau > t} = B_t \cap \set{\tau > t} \text{ for some } B_t \in \F_t}, \quad \forall \tir.
\end{equation}
In particular, $\bG$ is a right-continuous filtration that contains $\bF$ and makes $\tau$ a stopping time, but note that $\bG$ is \emph{not} the smallest right-continuous filtration that contains $\bF$ and makes $\tau$ a stopping time, compare e.g. the discussion in \cite{GuoZeng}.

It comes as a consequence of the Jeulin-Yor theorem that $S^\tau \dfn \pare{S_{\tau \wedge t}}_{\tir}$ is a semimartingale on $\basisgp$ (see, for example, \cite{MR519998}; actually, we shall provide an alternative simple proof of this fundamental fact in Corollary~\ref{cor:H_prog}).
Then, the class $\X(\bG, S^\tau)$ can be defined exactly in the same way as the corresponding class $\X(\bF, S)$ of \S~\ref{subsec:market_model}. The notation \naoneg\ used in the sequel refers to absence of arbitrage of the first kind with information $\bG$ and assets $S^\tau$.

A key role in the study of progressive enlargement of filtrations is played by the Az\'ema supermartingale associated with $\tau$ (given by the optional projection of $\indic_{\dbraco{0,\tau}}$ on $\basisp$, see \cite{MR604176} and references therein), that we denote by $Z$. This means that $\prob \bra{\tau > \sigma \such \F_{\sigma}} = Z_{\sigma}$ for all finite stopping times $\sigma$ on $\basis$, and note that $Z_\infty := \lim_{t\rightarrow\infty}Z_t = 0$ holds in view of $\prob \bra{\tau = \infty} = 0$ (note that the limit $Z_{\infty}$ always exists due to the supermartingale convergence theorem). Furthermore, if $A$ denotes the dual optional projection of $\indic_{\dbraco{\tau, \infty}}$, it follows that $\mu:=A + Z$ is a nonnegative uniformly integrable martingale on $\basisp$ with $\mu_t=\expec \bra{A_{\infty} | \F_t}$, for all $t\geq0$ (see e.g. \cite[Section 8.2]{Nik06}).
Moreover, by the general properties of the dual optional projection (see e.g. \cite[Theorem 5.27]{MR1219534}), for any stopping time $\sigma$ on $\basis$, it holds that $\Delta A_{\sigma}=\prob \bra{\tau = \sigma \such \F_{\sigma}}$ on $\set{\sigma < \infty}$.

For all $\nin$, let $\zeta_n \dfn \inf \set{t \in \Real_+ \such Z_t < 1/n}$. Furthermore, set
\begin{equation}\label{zeta}
\zeta \dfn \limn \zeta_n = \inf \set{t \in \Real_+ \such Z_{t-} = 0 \text{ or } Z_t = 0} = \inf \set{t \in \Real_+ \such  Z_t = 0},
\end{equation}
where the last equality holds from the fact that $Z$ is a nonnegative supermartingale on $\basisp$.
We now introduce a stopping time that will be of major importance in the sequel. Consider the $\F_\zeta$-measurable event $\Lambda \dfn \set{\zeta < \infty, \, Z_{\zeta -} > 0, \, \Delta A_{\zeta} = 0}$, and define
\begin{equation}\label{eta}
\eta \dfn \zeta_\Lambda = \zeta \indic_\Lambda + \infty \indic_{\Omega \setminus \Lambda}.
\end{equation}
Clearly, $\eta$ is a stopping time on $\basis$, and it satisfies $\prob\bra{\eta>\tau}=1$. Indeed, $\prob\bra{\tau>\eta | \F_\eta}=Z_\eta=0$ and $\prob\bra{\tau=\eta<\infty | \F_\eta}=\Delta A_\eta \indic_{\{\eta<\infty\}}=\Delta A_\zeta \indic_\Lambda=0$ (remember that $\prob\bra{\tau=\infty}=0$ by assumption).
In \S~\ref{subsec: examples}, it is shown that $\eta$ may be totally inaccessible or accessible. However, Lemma~\ref{lem: key-loc-mart} shows that $\prob \bra{\eta = \sigma < \infty \such \F_{\sigma -} } < 1$ holds for all predictable times $\sigma$ on $\basis$.

The results below establish  stability of condition \naone\ in the current setting of progressive filtration enlargement. Together with their counterparts for initially enlarged filtrations (Theorems~\ref{NA1-init-thm} and~\ref{thm:gen_init}), they are the main results of this paper. 

The first result is concerned with stability of the \naone \ condition for \emph{a fixed} semimartingale model.

\begin{thm}[]\label{thm: NA1_G} 
If \emph{\naonef} holds and $\prob\bra{\eta<\infty,\Delta S_{\eta}\neq0}=0$, then \emph{\naoneg} holds.
\end{thm}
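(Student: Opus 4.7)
The aim is to construct a strictly positive $\bG$-local martingale deflator for $S^\tau$ starting from a strictly positive $\bF$-local martingale deflator for $S$; the existence of such a $\bG$-deflator, via Theorem~\ref{thm: num_loc_mart} applied in the $\bG$-setting, then yields \naoneg.

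First, Theorem~\ref{thm: num_loc_mart} supplies $\qprob\sim\prob$ and a strictly positive $\hX\in\X(\bF,S)$ with $Y:=1/\hX\in\Yfq$. Since $Z$, $\tau$, $\eta$ and the evanescence class of $\set{\Delta S\neq 0}$ do not depend on the choice between $\prob$ and $\qprob$, and since \naoneg\ is preserved under equivalent measure change, I will work under $\qprob$ in what follows. As $\hX = x + H\cdot S$ for some $\bF$-predictable, $S$-integrable $H$, every jump of $Y=1/\hX$ forces a jump of $\hX$, and hence of $S$, so the hypothesis $\qprob\bra{\eta<\infty,\,\Delta S_\eta\neq 0}=0$ gives $\qprob\bra{\eta<\infty,\,\Delta Y_\eta\neq 0}=0$. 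This is exactly the jump-matching provided by Remark~\ref{rem:explanation}.

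Second, I introduce the candidate $\bG$-deflator for $S^\tau$ built out of the ratio $Y^\tau/Z^\tau$, with the appropriate convention at $\tau$ (relying on left-limits of $Z$ where needed and making the process absorbing after $\tau$, so that $\widetilde Y S^\tau$ stays a local martingale on $\dbraoc{\tau,\infty}$ for free). Strict positivity of $Z$ on $\dbraco{0,\zeta}$, combined with the fact that $Y$ has no jump at $\eta$, will ensure well-definedness and strict positivity of $\widetilde Y$ on $\dbraco{0,\tau}$; the case $\tau=\zeta$ falls outside $\Lambda$, hence outside the support of $\eta$, and is handled separately using $\Delta A_\tau>0$ there. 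The entire construction is phrased using the optional multiplicative decomposition of $Z$ established in \cite{Kar_14}, which furnishes an $\bF$-local martingale and a nondecreasing predictable process expressing $Z$ in a form well suited to the subsequent enlargement computations.

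Third, and this is where the main obstacle lies, verify that $\widetilde Y$ and $\widetilde Y\,S^\tau$ are $\bG$-local martingales under $\qprob$. The classical way to transfer the local martingale property from $\bF$ to $\bG$ is the Jeulin--Yor decomposition, which the introduction explicitly announces will be avoided. Instead, I plan to localise along the stopping times $\zeta_n$ defined before \eqref{zeta}, on which $1/Z$ is bounded, and then apply It\^o's product rule to $Y^\tau\cdot(1/Z^\tau)$ using the optional decomposition of $Z$ from \cite{Kar_14}: the $\bF$-local martingale piece of $Z$ is absorbed into a $\bG$-local martingale via the stopping at $\tau$, and a direct computation shows that the remaining predictable-finite-variation terms cancel against the multiplicative corrections, provided no jump of $Y$ occurs at a time where $Z$ jumps to zero away from $\tau$. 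The hypothesis $\Delta Y_\eta=0$ on $\set{\eta<\infty}$ is precisely what secures this cancellation. The same argument, applied to $(YS)^\tau\cdot(1/Z^\tau)$ in place of $Y^\tau\cdot(1/Z^\tau)$, shows that $\widetilde Y\,S^\tau$ is a $\bG$-local martingale. With $\widetilde Y\in\Ygq$ established, \naoneg\ follows from Theorem~\ref{thm: num_loc_mart} in $\bG$.
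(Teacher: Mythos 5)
There is a genuine gap: your candidate deflator $Y^\tau/Z^\tau$ is not a $\bG$-local martingale in general, even under the hypotheses of the theorem. Take $\bF$ trivial, $S\equiv 1$, $Y\equiv 1$ and $\tau$ exponentially distributed; then $Z_t=\e^{-t}$, $\eta=\infty$ a.s., and your candidate is $\e^{t\wedge\tau}$, which is strictly increasing on $\dbraco{0,\tau}$, hence not a local martingale on $\basisgp$, although \naoneg\ obviously holds here. The reason is structural: writing $Z=L(1-K)$ (the optional multiplicative decomposition of Theorem~\ref{thm: doleans sharpened}; note that $K$ is optional, not predictable as you state), dividing by $Z^\tau$ instead of $L^\tau$ leaves the nondecreasing factor $1/(1-K)^\tau$ uncompensated, and there is nothing in your construction for this drift to ``cancel against''---the no-jump condition at $\eta$ is unrelated to it. Moreover, the step ``the $\bF$-local martingale piece of $Z$ is absorbed into a $\bG$-local martingale via the stopping at $\tau$'' assumes away the whole difficulty of progressive enlargement: an $\bF$-local martingale stopped at $\tau$ is in general not a $\bG$-local martingale, and Proposition~\ref{mart-progr} shows that even $1/L^\tau$ is a $\bG$-local martingale only when $\prob\bra{\eta<\infty}=0$.

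What the paper does, and what your sketch is missing, is two-fold. First, the deflator is killed at $\eta$: with $D$ the predictable compensator of $\indic_{\dbraco{\eta,\infty}}$, Lemma~\ref{lem: key-loc-mart} gives that $\Exp(-D)^{-1}\indic_{\dbraco{0,\eta}}$ is a local martingale on $\basisp$, and the hypothesis $\prob\bra{\eta<\infty,\,\Delta S_\eta\neq0}=0$ (whence $\Delta Y_\eta=0$ and $\Delta(YS)_\eta=0$ on $\set{\eta<\infty}$) is used precisely, via a covariation computation with the predictable finite-variation process $\Exp(-D)^{-1}$, to show that $M\dfn Y\,\Exp(-D)^{-1}\indic_{\dbraco{0,\eta}}$ and $MS^i$ remain local martingales on $\basisp$, with $\set{M>0}=\dbraco{0,\eta}$. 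Second, the transfer to $\bG$ divides by $L^\tau$, not $Z^\tau$: Proposition~\ref{prop: loc_mart at random time}(2), whose proof rests on the identity \eqref{eq: main} $\expec\bra{V_\tau}=\expec\bra{\int_{\zi}V_t L_t\,\ud K_t}$ (this is where the $K$-part of $Z$ gets absorbed, not in an It\^o product rule), shows that a nonnegative local martingale on $\basisp$ vanishing on $\dbraco{\eta,\infty}$ becomes, after stopping at $\tau$ and division by $L^\tau$, a local martingale on $\basisgp$; applied to $M$ and $MS^i$ this yields $M^\tau/L^\tau\in\Ygp$ (Proposition~\ref{prop: suff_na1}). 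The vanishing at $\eta$ is not a technicality: without it the transfer genuinely fails whenever $\prob\bra{\eta<\infty}>0$, which is the content of Theorem~\ref{thm:gen_progr}(2). Your first step (reduction via Theorem~\ref{thm: num_loc_mart}, Lemma~\ref{lem: eta_P_Q} and the jump-matching $\Delta Y_\eta=0$) agrees with the paper, but the construction and verification of the $\bG$-deflator would have to be redone along the lines above.
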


\begin{rem}	\label{rem: NA1_G}
The message of the above theorem is that, to ensure the preservation of \naone \ under progressive filtration enlargement, one only needs to check whether the price process jumps at time $\eta$. It is then clear that, if \naone$(\mathbf{F},\Stilde)$ holds for $\Stilde:=S^{\eta-}=S^{\eta}-\Delta S_{\eta}\indic_{\dbraco{\eta,\infty}}$, then \naoneg \ holds as well, since $\prob\bra{\eta>\tau}=1$. Actually, in order to have \naoneg, it is sufficient that \naone$(\mathbf{F},\Stilde^{\zeta_n})$ holds for all $n\in\N$. Indeed, note that \naone$(\mathbf{F},\Stilde^{\zeta_n})$ implies \naone$(\mathbf{G},S^{\tau\wedge\zeta_n})$, and that the intervals $\dbra{0,\tau\wedge\zeta_n}$ exhaust $\dbra{0,\tau}$, since $\prob\bra{\zeta\geq\tau}=1$. Now the claim follows since the \naone \ condition can be given locally\footnote{Here we provide a proof by way of contradiction. Assume there are $T \in (0 ,\infty)$ and $\chi_T \in \lzp (\G_T)$ such that $\prob \bra{\chi_T > 0} > 0$, satisfying the condition that, for all $x \in (0, \infty)$, there exists $X^x=x+(H^x\cdot S^\tau) \in \X(\bG, S^\tau)$ with $\prob \bra{X^x_T \geq \chi_T} = 1$. Consider the set $A:=\{\chi_T>0\}$ and take $n$ big enough such that $B:=\{\zeta_n\wedge T\geq\tau\wedge T\}\cap A\in\G_T$ satisfies $\prob\bra{B}>0$. Note that $\psi_T:=\chi_T\indic_B\in \lzp (\G_T)$ is such that $\prob \bra{\psi_T > 0} > 0$. Now, for every $x \in (0, \infty)$, define the process $Y^x:=x+(H^x\cdot S^{\tau\wedge\zeta_n}) \in \X(\bG, S^{\tau\wedge\zeta_n})$. By definition of admissibility, $\prob \bra{Y^x_T\geq0}=1$. Moreover, on $B$ we have $Y^x_T=x+(H^x\cdot S^{\tau\wedge\zeta_n})_T=x+(H^x\cdot S^\tau)_T\geq\chi_T=\psi_T$. Altogether this gives $\prob \bra{Y^x_T\geq\psi_T}=1$, which is in contradiction to \naone$(\mathbf{G},S^{\tau\wedge\zeta_n})$.}.
\end{rem}

\begin{rem}	\label{rem:comp_bis}
Define $\widetilde{Z}$ to be the optional projection of $\indic_{\dbra{0, \tau}}$ on $\basisp$ (see also \cite[Section IV.1]{MR604176}); in other words, for any stopping time $\sigma$ on $\basis$, $\widetilde{Z}_\sigma = \prob \bra{\tau \geq \sigma \such \F_{\sigma}}$ holds on $\set{\sigma < \infty}$, so that $\widetilde{Z} = Z + \Delta A$.
It is then straightforward to see that condition $\prob \bra{\eta < \infty, \, \Delta S_{\eta} \neq 0} = 0$ is equivalent to evanescence of the set $\{ Z_{-} > 0, \, \widetilde{Z} = 0, \, \Delta S\neq 0\}$. 
Hence, Theorem~\ref{thm: NA1_G} corresponds exactly to the result proved in \cite[Corollary~2.20, part (b)]{Ch_et_al_13}, by means of different techniques. Moreover, when $S$ is a quasi-left-continuous semimartingale (see \cite[Definition~I.2.25]{MR1943877}), \cite[Theorem~2.8]{Ch_et_al_13} shows that the validity of \naone$(\mathbf{F},\Stilde^{\zeta_n})$, for all $n\in\N$, is actually necessary and sufficient for the preservation of the \naone \ property in $\bG$ (see also \cite[Remark 2.9]{Ch_et_al_13}).
\end{rem}

Theorem~\ref{thm: NA1_G} recovers the already-known fact that condition \naone\ is stable under progressive enlargement for all continuous semimartingales; see \cite{FJS} and \cite{Kre13}.
Moreover, it implies that the condition $\PP\bra{\eta < \infty} = 0$ is sufficient to guarantee \naone\ stability for \emph{any} collection of asset-price processes. In the next result we show that this condition is also necessary in order to have this general stability. In fact, for $\PP\bra{\eta < \infty} > 0$, we provide an explicit example of arbitrage of the first kind, which further shows how condition $\prob \bra{\eta < \infty, \, \Delta S_{\eta} \neq 0} = 0$ in Theorem~\ref{thm: NA1_G} cannot be dropped; see also \S~\ref{subsubsec: inacc}.
Statement (1) of the following theorem is an immediate consequence of Theorem~\ref{thm: NA1_G}, while the proof of statement (2) is given in Section~\ref{subsec:proof_main_prog}.

\begin{thm}\label{thm:gen_progr}
The following statements hold true:
\begin{enumerate}
\item If $\PP\bra{\eta < \infty} = 0$, then for \emph{any} $S$ such that \emph{\naonef} holds, \emph{\naoneg} also holds.
\item Suppose that $\PP\bra{\eta < \infty} > 0$. Then, with $D$ being the predictable compensator of $\indic_{\dbraco{\eta, \infty}}$ on $\basisp$, the nonnegative process $S \dfn \Exp(-D)^{-1} \indic_{\dbraco{0, \eta}}$ is a local martingale on $\basisp$, and $S^\tau$ is nondecreasing with $\prob \bra{S_\tau > 1} > 0$. In particular, condition \emph{\naonef} holds but condition \emph{\naoneg} fails.
\end{enumerate}
\end{thm}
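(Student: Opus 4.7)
Part (1) is immediate from Theorem~\ref{thm: NA1_G}, since $\PP\bra{\eta<\infty}=0$ trivially implies $\PP\bra{\eta<\infty,\Delta S_\eta\neq0}=0$ for any $S$.

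For part (2), the plan is to construct an explicit $S$ witnessing the failure of stability and to verify the listed properties in turn. Lemma~\ref{lem: key-loc-mart} ensures $\Delta D<1$ at every predictable time (hence everywhere, since $D$ is predictable), so $\Exp(-D)>0$ and $S = \Exp(-D)^{-1}\indic_{\dbraco{0,\eta}}$ is well-defined and nonnegative. To see that $S$ is an $\bF$-local martingale, I would note that $M := \indic_{\dbraco{\eta,\infty}} - D$ is an $\bF$-local martingale by the defining property of $D$, and then use integration by parts with the predictable finite-variation process $\Exp(-D)^{-1}$ to obtain an identity of the form $dS_t = -\Exp(-D)_{t-}^{-1}(1-\Delta D_t)^{-1}\,dM_t$; this is the classical representation for stochastic exponentials of single-jump compensators. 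Since $\prob\bra{\tau<\eta}=1$ (shown in the discussion following the definition of $\eta$), on $\dbra{0,\tau}$ we have $S = \Exp(-D)^{-1}$, which is nondecreasing because $D$ is. Moreover, \naonef\ holds trivially: the constant $Y\equiv 1$ is a strictly positive $\bF$-local martingale deflator for $S$ (both $Y$ and $YS=S$ are $\bF$-local martingales), so Theorem~\ref{thm: num_loc_mart} applies.

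The main technical step is to verify $\prob\bra{S_\tau>1}>0$, equivalently $\prob\bra{D_\tau>0}>0$. The compensator identity $\EE\bra{\int_0^\infty H_s\,dD_s} = \EE\bra{H_\eta\indic_{\{\eta<\infty\}}}$ (for predictable $H$) gives $\EE\bra{D_\infty}=\prob\bra{\Lambda}>0$, so $D$ is nontrivial; but translating non-triviality of the $\bF$-predictable process $D$ into positive mass on the $\bG$-stopped interval $\dbra{0,\tau}$ is subtle because $\tau$ is not an $\bF$-stopping time. I expect this to be the main obstacle. A clean approach is to combine the above compensator identity with the Az\'ema-type identity $\prob\bra{\tau\geq s\mid\F_{s-}} = Z_{s-}$ (valid at predictable times $s$) so as to express $\EE\bra{D_\tau} = \EE\bra{\int_0^\infty Z_{s-}\,dD_s}$; the hypothesis $Z_{\eta-}>0$ on $\Lambda$ (together with $\tau<\eta$ a.s.\ on $\Lambda$) then forces this integral to be strictly positive.

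Granted this, \naoneg\ fails via the buy-and-hold strategy $H\equiv 1$: for any $x>0$, the wealth $X^{x,1}_t = x + S^\tau_t - 1$ stays $\geq x>0$ by monotonicity of $S^\tau$, so $X^{x,1}\in\X(\bG,S^\tau)$; choosing $T$ large enough that $\prob\bra{S_{\tau\wedge T}>1}>0$ (possible by monotone convergence, since $S_{\tau\wedge T}\uparrow S_\tau$), the contingent claim $\chi_T := (S_{\tau\wedge T}-1)^+ \in \lzp(\G_T)$ satisfies $\prob\bra{\chi_T>0}>0$ and is dominated by $X^{x,1}_T$ for every $x>0$, realising an arbitrage of the first kind on $[0,T]$ with information $\bG$ and asset $S^\tau$.
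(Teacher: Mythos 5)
Your proposal is correct, and for the decisive step it takes a genuinely different route from the paper's. The parts you share: part (1) from Theorem~\ref{thm: NA1_G}; the construction of $S$, with $\Delta D<1$ and the local martingale property coming from Lemma~\ref{lem: key-loc-mart} (your integration-by-parts identity $\ud S_t=-\Exp(-D)_{t-}^{-1}(1-\Delta D_t)^{-1}\ud M_t$ is exactly what is derived inside that lemma, so you could simply invoke its part (2)); the monotonicity of $S^\tau$ from $\prob\bra{\tau<\eta}=1$; and the final buy-and-hold arbitrage, which is just a spelled-out version of the paper's one-line conclusion. Where you diverge is the key identity behind $\prob\bra{S_\tau>1}>0$, namely $\expec\bra{D_\tau}=\expec\bra{Z_{\eta-}\indic_{\set{\eta<\infty}}}$. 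The paper reaches it through the optional representation pair $(K,L)$ of Theorem~\ref{thm: doleans sharpened}: it applies \eqref{eq: main} with $V=D$, uses $Z=L(1-K)$ and the local martingale property of $L$ to pass from $\expec\bra{\int D_t L_t\,\ud K_t}$ to $-\expec\bra{\int D_t\,\ud Z_t}$, integrates by parts to obtain $\expec\bra{\int Z_{t-}\,\ud D_t}$, and then uses the compensator property of $D$. You instead write $\expec\bra{D_\tau}=\expec\bra{\int \indic_{\dbra{0,\tau}}(t)\,\ud D_t}$ and, since $D$ is predictable and increasing, replace the integrand by its predictable projection ${}^{\p}\indic_{\dbra{0,\tau}}=Z_-$ (your Az\'ema-type identity, which is correct), arriving at $\expec\bra{\int Z_{t-}\,\ud D_t}$; applying the compensator identity to the bounded predictable integrand $Z_-$ then gives $\expec\bra{Z_{\eta-}\indic_{\set{\eta<\infty}}}>0$, because $Z_{\eta-}>0$ on $\set{\eta<\infty}$ by the definition of $\Lambda$. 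Do write out this last chain explicitly (your sketch only gestures at how the two identities combine), but the plan is sound. Your route is more classical and self-contained: it bypasses the $(K,L)$ decomposition of \cite{Kar_14} altogether and avoids the paper's tacit interchange of expectation with the local-martingale integral $\int D\,(1-K_-)\,\ud L$; what the paper's route buys is uniformity with the rest of Section~\ref{sect.progr}, where the pair $(K,L)$ is the single tool driving all the results in the enlarged filtration.
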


\begin{rem} \label{rem:comp}
Similarly to the discussion in Remark~\ref{rem:comp_bis}, condition $\prob \bra{\eta < \infty} = 0$ is equivalent to evanescence of the set $\{ Z_{-} > 0, \, \widetilde{Z} = 0\} = \{ Z_{-} > 0, \, Z = 0, \, \Delta A = 0\}$. 
Therefore, the characterisation we obtain in Theorem~\ref{thm:gen_progr} is equivalent to that proved in \cite[Theorem~2.22]{Ch_et_al_13}, by means of different techniques.
\end{rem}

Section \ref{sect.progr} is devoted to the proof of Theorem \ref{thm: NA1_G} and Theorem \ref{thm:gen_progr}; several interesting side results are also included there. In \S~\ref{subsec: examples} that follows, a couple of illustrative examples are given. 

\subsection{Main results under initial filtration enlargement} \label{subsec:main_res_init}

We now study the stability of condition \naone\ under an initial enlargement of the filtration $\FF$ with respect to an $\F$-measurable random variable $J$ taking values in a Lusin space $(E,\cE)$, where $\cE$ denotes the Borel $\sigma$-field of $E$. With some abuse of notation, we denote by $\GG=(\G_t)_{t\in \Real_+}$ the right-continuous augmentation of the filtration $\GG^0=(\G^0_t)_{t\in \Real_+}$ defined by $\G^0_t:=\F_t\vee\sigma(J)$, for all $t\in \Real_+$. Let $\gamma : \cE \mapsto [0,1]$ be the law of $J$ (so that $\gamma \bra{B} = \prob \bra{J \in B}$ holds for all $B \in \cE$). Furthermore, for all $t\in \Real_+$, let $\pj_t : \Omega \times \cE \mapsto [0,1]$ be a regular version of the $\F_t$-conditional law of $J$, which exists since $(E,\cE)$ is Lusin.

\begin{ass}	\label{ass:Jac_sep}
Throughout \S \ref{subsec:main_res_init}, we work under the following condition:
\begin{itemize}
	\item[(J)]\quad for all $t\in \Real_+$, $\pj_t \ll\gamma$ holds in the $\PP$-a.s. sense.
\end{itemize}
\end{ass}

Assumption~\ref{ass:Jac_sep} is the classical density hypothesis introduced in \cite{Jac85}. Indeed, as shown in \cite[Proposition 1.5]{Jac85} (see also \cite[Theorem VI.11]{MR1037262}), condition (J) holds if and only if, for all $t\in \Real_+$ there exists a $\sigma$-finite measure $\nu_t$ on $(E,\cE)$ such that $\pj_t \ll \nu_t$ holds in the $\PP$-a.s. sense. 
Jacod's density hypothesis plays a prominent role in financial mathematics, notably in relation to the modeling of additional information (see e.g. \cite{AIS98,GP98,MR1831271,Bau03,GVV06,KH07,KO11}).

The next auxiliary result (the proof of which is postponed to Section~\ref{sect.init}) implies the existence a good version of conditional densities. It essentially corresponds to \cite[Lemma 1.8]{Jac85} (see also \cite[Appendix A.1]{Amen}). Note that $\cO(\FF)$ denotes the $\FF$-optional $\sigma$-field on $\Omega\times \Real_+$.

\begin{lem}	\label{lem-Jac}
There exists a $\pare{\cE \otimes \cO(\FF)}$-measurable function $E \times \Omega \times \Real_+ \ni (x, \omega,t)\mapsto p^x_t(\omega) \in [0, \infty)$, \cadlag \ in $t \in \Real_+$ and such that:
\begin{itemize}
\item[(i)]
for every $t\in \Real_+$, $\pj_t(\ud x) = p^x_t \,\gamma(\ud x)$ holds $\prob$-a.s;
\item[(ii)]
for every $x\in E$, the process $p^x=(p^x_t)_{t\in \Real_+}$ is a martingale on $\basisp$.
\end{itemize}
\end{lem}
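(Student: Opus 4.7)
\smallskip
\noindent\textbf{Proof plan for Lemma~\ref{lem-Jac}.}
The plan is to build $p^x_t(\omega)$ as a single Radon--Nikodym derivative on a product space so that joint measurability comes for free, and then to extract the martingale and the \cadlag\ properties by testing against a countable separating family of bounded Borel functions on $E$. To begin, fix a finite measure $\lambda$ on $\Real_+$ equivalent to Lebesgue measure (for example $\lambda(dt)=e^{-t}\,dt$), and define two finite measures on $(\Omega\times E\times\Real_+,\F_\infty\otimes\cE\otimes\cB(\Real_+))$ by
\begin{equation*}
R(d\omega,dx,dt) \dfn \prob(d\omega)\otimes\gamma(dx)\otimes\lambda(dt),
\qquad
Q(H) \dfn \expec\!\left[\int_{\Real_+}\!\!\int_E \indic_H(\omega,x,t)\,\pj_t(\omega,dx)\,\lambda(dt)\right].
\end{equation*}
Assumption~(J) gives $Q\ll R$, so by Radon--Nikodym there is a jointly measurable nonnegative function $q^x_t(\omega)\dfn (dQ/dR)$ which, by a standard Doob-type argument (or by a projection to the predictable/optional $\sigma$-field), may be chosen $\pare{\cE\otimes\cO(\FF)}$-measurable. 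A Fubini check shows that for $\lambda$-a.e.\ $t$, the identity $\pj_t(d x)=q^x_t\,\gamma(dx)$ holds $\prob$-a.s.

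Next, I would recover the martingale property. Pick a countable family $(f_n)_{n\geq 1}$ of bounded Borel functions on $E$ that is measure-determining. By definition of $q^x_t$ and by $\pj_t(f)=\expec[f(J)\mid\F_t]$, for every $n$ the process $M^n_t\dfn\int_E f_n(x)\,q^x_t\,\gamma(dx)$ agrees, for $\lambda$-a.e.\ $t$, with the uniformly integrable $\FF$-martingale $t\mapsto\expec[f_n(J)\mid\F_t]$; taking its \cadlag\ version $\widetilde M^n$, one obtains a genuine \cadlag\ $\FF$-martingale. I would then define, on the $\prob$-full event where all countably many $\widetilde M^n$ exist, a \cadlag\ version $p^x_t(\omega)$ by setting, for rational $t$, $p^x_t(\omega)\dfn \limsup_{s\downarrow t,\,s\in\mathbb Q}q^x_s(\omega)$ (passing first through a selection that makes $p^x_\cdot(\omega)$ \cadlag\ in $t$ for every fixed $x$ and $\omega$, via the standard $D$-upcrossing argument applied to the family $q^x_\cdot$ for $\gamma$-a.e.\ $x$, and modifying on a $\gamma$-null exceptional set). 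Then property~(i) is preserved, and for every bounded Borel $f$ on $E$, the process $\int_E f(x)\,p^x_t\,\gamma(dx)$ is a \cadlag\ version of $\expec[f(J)\mid\F_t]$.

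Finally, I would verify~(ii) for \emph{every} $x\in E$, not only $\gamma$-a.e. This is the step I expect to be the main obstacle, because the martingale identity is established originally only on $\gamma$-null-complemented sets. The fix is to redefine $p^x$ on an appropriate exceptional $\gamma$-null set by setting $p^x\equiv 0$ there, and to use the $\cE\otimes\cO(\FF)$-measurability of the Radon--Nikodym derivative together with a monotone class argument over the separating family $(f_n)$ to conclude: for each fixed $x$, taking $f_n$ approximating $\indic_{\{x\}}$-type indicators through the Lusin structure (or, more cleanly, verifying the martingale equality on every rational pair $s\le t$ via Fubini and the identity $\int_E g(x)\,\expec[p^x_t\mid\F_s]\,\gamma(dx)=\int_E g(x)\,p^x_s\,\gamma(dx)$ for all bounded Borel $g$) forces $\expec[p^x_t\mid\F_s]=p^x_s$ $\prob$-a.s. for every $x$ in a $\gamma$-full set, and the redefinition on its complement yields the statement for every $x\in E$. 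The delicate point throughout is to keep joint measurability intact while performing the countably many null-set modifications, which is why the product-space Radon--Nikodym construction, rather than a $t$-by-$t$ selection, is essential.
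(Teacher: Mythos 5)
Your plan is, in effect, an attempt to re-prove from scratch what the paper settles by citation: the paper's proof invokes \cite[Lemma 1.8]{Jac85}, which already produces densities that are c\`adl\`ag and martingales for \emph{every} $x$ and jointly measurable with respect to the optional $\sigma$-field of the filtration $\oF_t=\bigcap_{s>t}\pare{\cE\otimes\F_s}$ on $E\times\Omega$, and then the parametrized measurable-version result of \cite{SY} (Remark 1 after Proposition 3) to upgrade to $\cE\otimes\cO(\FF)$-measurability. Measured against this, your construction has gaps precisely where those two references do the work. The first is at the very start: the measure $Q$ is not well defined as written, because the kernels $\pj_t(\omega,\ud x)$ are regular versions chosen separately for each $t$, so $(\omega,t)\mapsto\pj_t(\omega,B)$ need not be jointly measurable and the integral against $\lambda(\ud t)$ has no a priori meaning; producing a jointly measurable version of the conditional laws is itself part of what is to be proved. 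If you try to sidestep this by using the unconditional joint law of $(\omega,J,t)$ instead, absolute continuity with respect to $\prob\otimes\gamma\otimes\lambda$ fails on $\F_\infty\otimes\cE\otimes\cB(\Real_+)$ (hypothesis (J) concerns each $\F_t$, not $\F_\infty$; think of $J$ being $\F_\infty$-measurable with diffuse law), so the Radon--Nikodym derivative must be taken on an optional/predictable-type $\sigma$-field on $E\times\Omega\times\Real_+$ generated by sets $B\times A\times(s,\infty)$ with $A\in\F_s$ --- which is essentially Jacod's original construction rather than a shortcut.

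Second, the claim that the raw derivative ``may be chosen $\cE\otimes\cO(\FF)$-measurable by a standard Doob-type argument or an optional projection'' is exactly the Stricker--Yor selection theorem the paper cites, and it cannot be invoked at that stage: for fixed $x$ the raw $q^x$ is defined only up to $\prob\otimes\lambda$-null sets, is neither adapted nor regular in $t$, and an optional projection depending measurably on the parameter requires the same machinery; it should be applied at the end, to the regularized family. Third, ``property (i) is preserved'' for \emph{every} $t$ (not just $\lambda$-a.e.\ $t$) is one of the genuinely delicate points: it needs the interchange of $\lim_{s\downarrow t}$ with the $\gamma$-integral, e.g.\ via backward martingale convergence in $\Lb^1(\prob)$ for $\gamma$-a.e.\ $x$ together with the uniform bound $\expec[q^x_s]=1$ (or a Scheff\'e argument), none of which appears in the plan; likewise the martingale property for $\gamma$-a.e.\ $x$ cannot be obtained by quantifying over all $A\in\F_s$ (not countably generated) but should be argued through conditional expectations on the product space. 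Your final device of setting $p^x\equiv0$ on the exceptional $\gamma$-null set is fine --- the zero process is a martingale --- provided that set is shown to lie in $\cE$. All of this is repairable, but then the argument you obtain is, in substance, the proof of Jacod's Lemma 1.8; as written, the proposal asserts rather than proves the steps that carry the content.
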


For every $x\in E$ and $n\in\N$, define families of stopping times on $\basis$ via
\be	\label{st-init}
\zeta^x_n:=\inf\{t\in\R_+ \such p^x_t<1/n\}
\qquad\text{and}\qquad
\zeta^x:=\inf\{t\in\R_+ \such p^x_t=0\}.
\ee

For all $x\in E$, it holds that $(\zeta^x_n)_{\nin}$ is a nondecreasing sequence, $\prob \bra{\limn \zeta^x_n = \zeta^x} = 1$, and $p^x=0$ on $\lsi\!\zeta^x,\infty\!\lsi$ (see also \cite[Lemma 1.8]{Jac85}).
Note also that, due to \cite[Corollary 1.11]{Jac85}, it holds that $\prob \bra{\zeta^J < \infty} = 0$, with $\zeta^J (\omega) \dfn \zeta^{J(\omega)}(\omega)$ for all $\omega \in \Omega$.
For every $x\in E$, we consider the $\F_{\zeta^x}$-measurable event $\Lambda^x:=\{\zeta^x<\infty,p^x_{\zeta^x-}>0\}$. Define
\be	\label{eta-init}
\eta^x := \zeta^x_{\Lambda^x} = \zeta^x\ind_{\Lambda^x}+\infty\ind_{\Omega\setminus\Lambda^x}, \quad \forall x \in E,
\ee
which is a stopping time on $\basis$ and represents the time at which $p^x$ jumps to zero.

Under Assumption \ref{ass:Jac_sep}, we now discuss counterparts to Theorems~\ref{thm: NA1_G} and~\ref{thm:gen_progr} on the validity of \naone\ in initially enlarged filtrations. Note that Assumption \ref{ass:Jac_sep} guarantees that $S$ is a semimartingale on $\basisgp$, by \cite[Theorem 1.1]{Jac85}, which is proved by relying on the Bichteler-Dellacherie characterisation of semimartingales. (In this respect, see also Remark~\ref{rem:H_init} of the present paper.) This allows us to define the class $\X(\bG, S)$ and the condition NA$_1 (\bG, S)$  as done in \S~\ref{subsec:market_model} with respect to the filtration $\bF$. The first result is concerned with stability of condition \naone\ for \emph{a fixed} semimartingale model.

\begin{thm} \label{NA1-init-thm}
Under Assumption \ref{ass:Jac_sep}, suppose further that the space $\Lb^1 (\Omega, \F, \prob)$ is separable and $\PP\bra{\eta^x<\infty,\Delta S_{\eta^x}\neq0}=0$ holds for $\gamma$-a.e. $x\in E$.  If \emph{\naonef} holds, then \emph{\naone$(\bG,S)$} holds.
\end{thm}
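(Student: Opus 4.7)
The plan is to mirror the argument of Theorem~\ref{thm: NA1_G}: from an $\bF$-local martingale deflator provided by \naonef\ one constructs a $\bG$-local martingale deflator for $S$ by multiplicative twisting with the conditional density $p^J$ supplied by Lemma~\ref{lem-Jac}, and Theorem~\ref{thm: num_loc_mart} applied in the enlarged filtration then yields \naone$(\bG,S)$. First, Theorem~\ref{thm: num_loc_mart} and Remark~\ref{rem:explanation} would produce $\qprob\sim\prob$ and a strictly positive $Y\in\Yfq$ with $Y_0=1$ and $\set{\Delta Y\neq 0}=\set{\Delta S\neq 0}$; absorbing the density process of $\qprob$ with respect to $\prob$ into $Y$, I may assume $\qprob=\prob$, so that $Y$ and $YS$ are $\bF$-local martingales under $\prob$.

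I would then introduce the candidate $\bG$-deflator $\tilde Y_t := Y_t/p^J_t$. Since $\prob\bra{\zeta^J<\infty}=0$ by \cite[Corollary~1.11]{Jac85}, the process $p^J$ is strictly positive and \cadlag\ in $\bG$, so $\tilde Y$ is well defined and strictly positive. The $\bG$-stopping times $\tau_n := \inf\set{t\in\Real_+\such p^J_t\notin(1/n,n)}$ satisfy $\tau_n\uparrow\infty$ almost surely and would serve as a localising sequence, combined when needed with an $\bF$-localising sequence for $Y$.

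The heart of the proof is to show that $\tilde Y^{\tau_n}$ and $(\tilde Y S)^{\tau_n}$ are $\bG$-local martingales under $\prob$. Setting $\tau^x_n := \inf\set{t\such p^x_t\notin(1/n,n)}$, for $s\leq t$, $F\in\F_s$ and $B\in\cE$ Lemma~\ref{lem-Jac}(i) together with Fubini would give
\[
\EE\bra{\tilde Y_{t\wedge\tau_n}\indic_{F\cap\set{J\in B}}}
=\int_B\EE\bra{Y_{t\wedge\tau^x_n}\indic_F}\gamma(\ud x),
\]
the cancellation $p^x/p^x=1$ being valid on $[0,\tau^x_n]$, and the $\bF$-martingale property of $Y^{\tau^x_n}$ then produces the same expression at time $s$. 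The identical reasoning with $Y$ replaced by $YS$ handles $(\tilde Y S)^{\tau_n}$ provided that stopping by $\tau^x_n$ does not introduce an uncompensated jump. This is where the hypothesis $\prob\bra{\eta^x<\infty,\Delta S_{\eta^x}\neq 0}=0$ would intervene: at $\eta^x$ the martingale $p^x$ jumps from a strictly positive value to zero and $\tau^x_n\uparrow\eta^x$ on $\set{p^x_{\eta^x-}\in(1/n,n)}$, and the absence of a simultaneous jump of $S$ (hence, by $\set{\Delta Y\neq 0}=\set{\Delta S\neq 0}$, of $Y$) is what keeps the identity intact for the product $YS$. The separability of $\Lb^1(\Omega,\F,\prob)$ would be used to select a jointly $\cE\otimes\cO(\FF)$-measurable version of the densities and of the relevant null sets, so that the $\gamma$-a.e.\ statements in $x$ may be invoked uniformly inside the integral.

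Once $\tilde Y\in\Y(\bG,S,\prob)$ is established, Theorem~\ref{thm: num_loc_mart} in the enlarged filtration immediately gives \naone$(\bG,S)$. The hardest part, I expect, will be the simultaneous-in-$x$ control at the jump times $\eta^x$: combining the jump hypothesis with the $\Lb^1$-separability to rule out pathological contributions from the blow-up times of $p^x$ as $x$ varies over $E$, and to carry out the Jacod-type disintegration for $YS$ without inheriting any phantom drift from the division by $p^J$.
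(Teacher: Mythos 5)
There is a genuine gap, and it sits at the heart of your construction: the candidate deflator $\tilde Y=Y/p^J$ is in general \emph{not} a local martingale on $\basisgp$ under the hypotheses of the theorem, and the disintegration identity you display is where this breaks. Carrying out the computation correctly via \eqref{exp-init}, one gets
$\expec \bra{\tilde Y_{t\wedge\tau_n}\indic_{F\cap\set{J\in B}}}=\int_B\expec\bra{(Y_{t\wedge\tau^x_n}/p^x_{t\wedge\tau^x_n})\indic_{\set{p^x_{t\wedge\tau^x_n}>0}}\,p^x_t\,\indic_F}\gamma(\ud x)$,
and after optional sampling the inner expectation equals $\expec\bra{Y_{t\wedge\tau^x_n}\indic_{\set{p^x_{t\wedge\tau^x_n}>0}}\indic_F}$, \emph{not} $\expec\bra{Y_{t\wedge\tau^x_n}\indic_F}$: the event $\set{p^x_{t\wedge\tau^x_n}=0}$, i.e.\ $\set{\eta^x\leq t\wedge\tau^x_n}$, is thrown away, and on it $Y$ is strictly positive, so mass is genuinely lost whenever $\prob\bra{\eta^x<\infty}>0$ on a $\gamma$-non-null set. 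This failure has nothing to do with jumps of $S$ or of $YS$ — take $S$ continuous (so the hypothesis $\prob\bra{\eta^x<\infty,\Delta S_{\eta^x}\neq0}=0$ holds trivially) and $Y\equiv1$: then $\tilde Y=1/p^J$, and Proposition~\ref{all-mart-init} shows this is a local martingale on $\basisgp$ \emph{if and only if} $\prob\bra{\eta^x<\infty}=0$ for $\gamma$-a.e.\ $x$. So your route only works in the situation already covered by statement (1) of Theorem~\ref{thm:gen_init} (cf.\ Remark~\ref{rem:NA1_init_takaoka}); in the interesting case where the densities $p^x$ can jump to zero, $Y/p^J$ is only a strict supermartingale and cannot be fed into Theorem~\ref{thm: num_loc_mart} in $\bG$. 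The jump hypothesis on $S$ at $\eta^x$ cannot repair your identity for $\tilde Y S$ because it already fails for $\tilde Y$ itself.

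The missing idea is a correction \emph{before} dividing by $p^J$: one must first kill the $\bF$-deflator at $\eta^x$ and compensate for the killing. This is what the paper does: with $D^x$ the predictable compensator of $\indic_{\dbraco{\eta^x,\infty}}$, Lemma~\ref{compensator} gives that $\Exp(-D^x)^{-1}\indic_{\dbraco{0,\eta^x}}$ is a local martingale on $\basisp$, and one sets $M^x \dfn Y\,\Exp(-D^x)^{-1}\indic_{\dbraco{0,\eta^x}}$, so that $\set{M^x>0}=\dbraco{0,\eta^x}$. The hypothesis $\prob\bra{\eta^x<\infty,\Delta S_{\eta^x}\neq0}=0$ enters exactly here (not in the disintegration): since $Y$ and $YS$ then do not jump at $\eta^x$, the covariation with the single-jump correction vanishes as in the proof of Theorem~\ref{thm: NA1_G}, so $M^x$ and $M^xS$ remain local martingales on $\basisp$ for $\gamma$-a.e.\ $x$. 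Only for such a family, vanishing on $\dbraco{\eta^x,\infty}$, does the transfer to $\bG$ work (Proposition~\ref{mart-init}(2)/Proposition~\ref{NA1-init-prop}), yielding the deflator $M^J/p^J\in\Y(\bG,S,\prob)$, which is strictly positive because $\prob\bra{\eta^J=\infty}=1$. Separability of $\Lb^1(\Omega,\F,\prob)$ is needed to choose a jointly $\cE\otimes\cP(\FF)$-measurable version of $\Exp(-D^x)$, and the reduction to $\qprob=\prob$ requires Lemma~\ref{eta-inv} (invariance of the $\eta^x$ under equivalent measure changes), a point your ``absorb the density into $Y$'' step glosses over.
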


Note that separability is a mild technical assumption that only allows us to use the results of \cite[Proposition 4]{SY}; as the authors of the latter paper mention, it is satisfied in all cases of practical interest.

In \S~\ref{subsubsec: ex_in} we will provide an example showing how condition $\PP\bra{\eta^x<\infty,\Delta S_{\eta^x}\neq0}=0$, for $\gamma$-a.e. $x\in E$, cannot be dropped.

As was the case for progressively enlarged filtrations, Theorem \ref{NA1-init-thm} has the following consequence: if $\PP\bra{\eta^x<\infty}=0$ for $\gamma$-a.e. $x\in E$, condition \naonef \ implies condition \naone$(\bG,S)$ \ for \emph{any} asset-price process $S$. In order to formulate the counterpart to statement (2) of Theorem \ref{thm:gen_progr} (regarding stability of the \naone\  condition for \emph{all} semimartingale models) in the case of initially enlarged filtrations, we have to slightly depart from our original setting. More precisely, the explicit example of an arbitrage of the first kind in the enlarged filtration when $\PP\bra{\eta^x<\infty} > 0$ will involve a potentially infinite collection of semimartingales. (However, see Remark \ref{rem:finite_arb1}.) To wit, with $D^x$ denoting the predictable compensator of $\indic_{\dbraco{\eta^x, \infty}}$ on $\basisp$ for all $x \in E$, define the collection $(S^x)_{x \in E}$ via
\begin{equation} \label{eq:S^x}
S^x \dfn \Exp(-D^x)^{-1} \indic_{\dbraco{0, \eta^x}}, \quad \forall x \in E.
\end{equation}
In Section \ref{sect.init}, under separability assumption on the space $\Lb^1 (\Omega, \F, \prob)$, it is established that one can obtain a version of the function $E \times \Omega \times \Real_+ \ni (x,\omega,t) \mapsto S^x_t(\omega)$ which is  $\cE\otimes\cO(\FF)$-measurable. The process $S^J$ defined via $S^J (\omega, t) \dfn S^{J(\omega)}_t (\omega)$ for all $(\omega, t) \in \Omega \times \Real_+$ is a semimartingale on $\basisgp$, and has the following financial interpretation: an insider with knowledge of $J$ and unit initial capital takes at time zero a position on a single unit of the stock with index $J$, and keeps it indefinitely. Although this strategy may involve an infinite number of assets, it is of the simplest possible buy-and-hold nature. 
Statement (1) of the following theorem is an immediate consequence of Theorem~\ref{NA1-init-thm}, while the proof of statement (2) is given in Section~\ref{subsec:proofs_initial}.

\begin{thm} \label{thm:gen_init}
Under Assumption \ref{ass:Jac_sep}, the following statements hold true:
\begin{enumerate}
\item If $\PP\bra{\eta^x < \infty} = 0$ holds for $\gamma$-a.e $x \in E$, then for \emph{any} $S$ such that \emph{\naonef} holds, \emph{\naone$(\bG,S)$} also holds.
\item Suppose that the space $\Lb^1 (\Omega, \F, \prob)$ is separable and that $\int_E \PP\bra{\eta^x < \infty} \gamma \bra{\ud x} > 0$. Then, the family $(S^x)_{x \in E}$ in \eqref{eq:S^x} consists of local martingales on $\basisp$, and $S^J$ is nondecreasing with $\prob \bra{S^J_t = S^J_0, \, \forall \tir} < 1$.
In particular, \emph{\naone}$(\bF,S^x)$ holds, for every $x\in E$, but \emph{\naone}$(\bG,S^J)$ fails.
\end{enumerate}
\end{thm}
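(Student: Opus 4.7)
Part~(1) follows immediately from Theorem~\ref{NA1-init-thm}, since $\prob[\eta^x < \infty] = 0$ trivially implies $\prob[\eta^x < \infty, \Delta S_{\eta^x} \neq 0] = 0$. For Part~(2), my plan is to mirror the structure of statement~(2) of Theorem~\ref{thm:gen_progr}, replacing the single stopping time $\eta$ by the family $\set{\eta^x}_{x \in E}$ and transferring information from each fixed $x$ to the random variable $J$ via the $\F_\infty$-conditional density $p^x_\infty\,\gamma(\ud x)$.

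\textbf{Local martingale property of $S^x$.} For fixed $x$, $\eta^x$ is an $\bF$-stopping time with predictable compensator $D^x$ of $N^x := \indic_{\dbraco{\eta^x, \infty}}$, so $M^x := N^x - D^x$ is a local martingale on $\basisp$. An integration-by-parts computation analogous to the one used for statement~(2) of Theorem~\ref{thm:gen_progr} (see \S\ref{sect.progr}) shows that $S^x = \Exp(-D^x)^{-1}(1 - N^x)$ satisfies
\[
\ud S^x_t \,=\, - \frac{(1 - N^x_{t-}) \, S^x_{t-}}{1 - \Delta D^x_t} \, \ud M^x_t,
\]
which identifies $S^x$ as a nonnegative local martingale on $\basisp$, strictly positive on $\dbraco{0, \eta^x}$ and jumping to zero at $\eta^x$. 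Since $Y \equiv 1$ is strictly positive with both $Y$ and $Y S^x = S^x$ local martingales, one has $Y \in \Y(\bF, S^x, \prob)$, so NA$_1(\bF, S^x)$ holds (by the equivalence recalled after Theorem~\ref{thm: num_loc_mart}). The joint $\cE \otimes \cO(\bF)$-measurable version of $(x, \omega, t) \mapsto S^x_t(\omega)$ will be constructed in Section~\ref{sect.init} via Lemma~\ref{lem-Jac}, the separability of $\Lb^1(\Omega, \F, \prob)$, and \cite[Proposition~4]{SY}; setting $S^J_t(\omega) := S^{J(\omega)}_t(\omega)$ then yields a $\bG$-adapted finite-variation semimartingale on $\basisgp$.

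\textbf{Monotonicity and non-triviality of $S^J$.} Since \cite[Corollary~1.11]{Jac85} yields $\prob[\zeta^J < \infty] = 0$ and hence $\eta^J = \infty$ $\prob$-a.s., $S^J = \Exp(-D^J)^{-1}$ holds up to evanescence, which is nondecreasing ($D^J$ is nondecreasing with jumps in $[0, 1)$). The claim $\prob \bra{S^J_t = S^J_0, \, \forall \tir} < 1$ is equivalent to $\prob[D^J_\infty > 0] > 0$, which I would derive from $\expec[D^J_\infty] > 0$. The density property yields, for any $\cE \otimes \F$-measurable $H \geq 0$, the identity $\expec[H(J, \cdot)] = \int_E \expec[H(x, \cdot) \, p^x_\infty] \, \gamma(\ud x)$; applied to $H(x, \omega) := D^x_\infty(\omega)$, this gives $\expec[D^J_\infty] = \int_E \expec[D^x_\infty \, p^x_\infty] \, \gamma(\ud x)$. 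For each fixed $x$, the dual-predictable-projection identity applied to the (deterministic in time) integrand $p^x_\infty$, whose predictable projection equals $p^x_{s-}$ by the martingale property of $p^x$, gives $\expec[D^x_\infty \, p^x_\infty] = \expec[\int_0^\infty p^x_{s-}\,\ud D^x_s]$; a second application, exploiting that $D^x$ is the dual predictable projection of $N^x$, then rewrites this as
\[
\expec\bra{D^x_\infty\,p^x_\infty} \,=\, \expec\bra{\int_0^\infty p^x_{s-}\,\ud N^x_s} \,=\, \expec\bra{p^x_{\eta^x -}\indic_{\set{\eta^x < \infty}}}.
\]
Since $p^x_{\eta^x -} > 0$ on $\set{\eta^x < \infty}$ by the very definition of $\eta^x$, this is strictly positive whenever $\prob[\eta^x < \infty] > 0$; the hypothesis $\int_E \prob[\eta^x < \infty]\,\gamma(\ud x) > 0$ therefore forces $\expec[D^J_\infty] > 0$.

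\textbf{Arbitrage construction.} As $S^J$ is nondecreasing and $\set{S^J_T > 1}$ is nondecreasing in $T$, there exists $T < \infty$ with $\prob[S^J_T > 1] > 0$. Then $\chi_T := S^J_T - 1 \in \lzp(\G_T)$ satisfies $\prob[\chi_T > 0] > 0$, while for every $x \in (0, \infty)$ the buy-and-hold strategy $X_t := x - 1 + S^J_t$ belongs to $\X(\bG, S^J)$ (since $S^J \geq 1$ gives $X \geq x > 0$), has $X_0 = x$, and satisfies $X_T \geq \chi_T$; this exhibits an arbitrage of the first kind, so NA$_1(\bG, S^J)$ fails. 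The main obstacle of the proof is the measurable-selection step producing a jointly $\cE \otimes \cO(\bF)$-measurable version of $x \mapsto S^x$, which genuinely needs the separability hypothesis and selection-theoretic machinery; once this is in place, the remaining probabilistic ingredients (the local-martingale identification and the conditional-density computation above) follow by direct calculation.
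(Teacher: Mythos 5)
Your overall route for part (2) is the paper's: identify each $S^x$ as a nonnegative local martingale on $\basisp$ via the compensator $D^x$ (your SDE is exactly the integration-by-parts identity behind part (2) of Lemma~\ref{compensator}), get joint measurability from the separability hypothesis and \cite[Proposition 4]{SY}, deduce monotonicity of $S^J$ from $\prob\bra{\eta^J=\zeta^J=\infty}=1$, and show non-triviality of $D^J$ by a conditional-density computation; the explicit buy-and-hold arbitrage at the end is a correct (and standard) supplement. Two caveats, one minor and one genuine. The minor one: part (1) of the theorem does not assume separability of $\Lb^1(\Omega,\F,\prob)$, whereas Theorem~\ref{NA1-init-thm} does, so "follows immediately" needs the extra observation (Remark~\ref{rmk:sep}) that separability only enters through the measurable version of $\pare{\Exp(-D^x)}_{x\in E}$, which is vacuous when $\prob\bra{\eta^x<\infty}=0$ for $\gamma$-a.e.\ $x$.

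The genuine gap is that your non-triviality computation is run at $t=\infty$, where neither of the two identities you invoke is available. First, the identity $\expec\bra{H(J,\cdot)}=\int_E\expec\bra{H(x,\cdot)\,p^x_\infty}\gamma\bra{\ud x}$ is not a consequence of Assumption~\ref{ass:Jac_sep}: condition (J) and formula \eqref{exp-init} are only assumed/valid at finite times, and the density property typically fails at infinity (e.g.\ if $J$ is $\F_\infty$-measurable with diffuse law $\gamma$, the terminal conditional law is a Dirac mass singular to $\gamma$, and in fact $p^x_\infty=0$ $\prob$-a.s.\ for $\gamma$-a.e.\ $x$, so your right-hand side vanishes while the left-hand side need not). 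Second, the predictable projection of the constant-in-time process $p^x_\infty$ is $\expec\bra{p^x_\infty \such \F_{s-}}$, which equals $p^x_{s-}$ only when the martingale $p^x$ is uniformly integrable, i.e.\ closed by $p^x_\infty$; this is not guaranteed under (J) (for a martingale of the type $p^x_t=\e^t\indic_{\set{t<\zeta}}$ with $\zeta$ standard exponential, the projection is $0$ while $p^x_{s-}>0$, so $\expec\bra{D^x_\infty p^x_\infty}=0$ whereas $\expec\bra{p^x_{\eta^x-}\indic_{\set{\eta^x<\infty}}}>0$). Thus both intermediate equalities in your chain can fail, even though the resulting formula for $\expec\bra{D^J_\infty}$ happens to be correct. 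The repair — and this is what the paper does — is to perform the whole computation at a finite horizon $T$, where \eqref{exp-init} applies to the $\cE\otimes\F_T$-measurable function $(x,\omega)\mapsto D^x_T(\omega)$ and the predictable projection of the martingale $p^x$ is $p^x_-$:
\begin{equation*}
\expec\bra{D^J_T}=\int_E\expec\bra{D^x_T\,p^x_T}\gamma\bra{\ud x}=\int_E\expec\bra{\int_{(0,T]}p^x_{t-}\,\ud D^x_t}\gamma\bra{\ud x}=\int_E\expec\bra{p^x_{\eta^x-}\indic_{\set{\eta^x\leq T}}}\gamma\bra{\ud x},
\end{equation*}
which, under $\int_E\prob\bra{\eta^x<\infty}\gamma\bra{\ud x}>0$, is strictly positive for some finite $T$, giving $\prob\bra{D^J_T>0}>0$ and hence the non-constancy of $S^J$ (if desired, your $t=\infty$ statement then follows by monotone convergence in $T$).
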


Loosely speaking, in part (2) of Theorem \ref{thm:gen_init}, the insider identifies from the beginning a single asset in the family $\pare{S^x}_{x \in E}$ which will \emph{not} default and can therefore arbitrage.

\begin{rem} \label{rem:finite_arb1}
If $\sum_{k \in \Natural} \prob \bra{J = x_k} = 1$ holds for a family $\set{x_k \such k \in \Natural} \subseteq E$, one can find a single asset that will lead to arbitrage of the first kind. Indeed, $\int_E \PP\bra{\eta^x < \infty} \gamma \bra{\ud x} > 0$ implies that there exists $\kappa \in \Natural$ such that $\prob \bra{\eta^{x_\kappa} < \infty} > 0$. Since $\prob \bra{\zeta^J < \infty} = 0$, $\prob \bra{J = x_\kappa, \, \eta^{x_\kappa} < \infty} = 0$ follows in a straightforward way; therefore, the buy-and-hold strategy $\indic_{\set{J = x_\kappa}}$ results in the arbitrage $\indic_{\set{J = x_\kappa}} \cdot S^{x_\kappa}$. 

When the law $\gamma$ has a diffuse component the previous argument may not work; however, one can still obtain an arbitrage of the first kind using a single asset under an assumption that is stronger (more precisely, at least not weaker) than $\int_E \PP\bra{\eta^x < \infty} \gamma \bra{\ud x} > 0$ as in part (2) of Theorem \ref{thm:gen_init}. To wit, for $B \in \cE$ with $\gamma \bra{B} > 0$, define $\eta^B$ in the obvious way, as the time that the martingale $\pare{\pj_t \bra{B}}_{\tir}$ jumps to zero. Note the equality $\pj_t \bra{B} = \int_B p^x_t \gamma \bra{\ud x}$, for all $\tir$; in particular, $\prob \bra{\eta^B < \infty} > 0$ implies that $\int_E \PP\bra{\eta^x < \infty} \gamma \bra{\ud x} > 0$. (It is an open question whether the converse implication is also true for some set $B\in \cE$.) Under the assumption $\prob \bra{\eta^B < \infty} > 0$ for some $B \in \cE$ with $\gamma \bra{B} > 0$, upon defining $S \dfn \Exp(-D^B)^{-1} \indic_{\dbraco{0, \eta^B}}$ where $D^B$ denotes the predictable compensator of $\indic_{\dbraco{\eta^B, \infty}}$ on $\basisp$, it can be shown that $S$ is a local martingale on $\basisp$, and $\indic_{\set{J \in B}} \cdot S$ is nondecreasing with $\prob \bra{S_t = S_0, \, \forall \tir} < 1$, that is, \naonef\ holds while \naone$(\bG,S)$ fails. 
\end{rem}

\begin{rem}
It is interesting to observe that the necessary and sufficient conditions given in Theorem~\ref{thm:gen_progr} and in Theorem~\ref{thm:gen_init} for the preservation of the \naone \ property under filtration enlargements bear resemblance to the necessary and sufficient condition obtained in \cite{F14} for the preservation of the \naone \ property under absolutely continuous (but not necessarily equivalent) changes of measure. This similarity is not a coincidence, given the deep link existing between filtration enlargements and non-equivalent changes of measure, as shown in \cite{Y85}.
\end{rem}

The proof of Lemma~\ref{lem-Jac} as well as of Theorems~\ref{NA1-init-thm} and \ref{thm:gen_init} is given in \S~\ref{sect.init}. An example in the initial enlargement framework involving the Poisson process is given in \S~\ref{subsec: examples} below.

\subsection{Examples} \label{subsec: examples}

The first two examples are in the progressive filtration enlargement framework. In the first one, the stopping time $\eta$ is totally inaccessible and assertion (2) of Theorem~\ref{thm:gen_progr} is illustrated by explicit computations; the second example contains a set-up where $\eta$ is accessible. 
The last example shows how condition $\PP\bra{\eta^x<\infty,\Delta S_{\eta^x}\neq0}=0$, for $\gamma$-a.e. $x\in E$, cannot be dropped in Theorem~\ref{NA1-init-thm}.

\subsubsection{An example under progressive filtration enlargement where $\eta$ is totally inaccessible} \label{subsubsec: inacc}

Let $\probtriple$ be a complete probability space supporting an $\F$-measurable random variable $\zeta : \Omega \mapsto \Real_+$ such that $\prob \bra{\zeta > t} = \exp(-t)$ holds for all $t \in \Real_+$. Set $\bF = (\F_t)_{t \in \Real_+}$ to be the smallest filtration that satisfies the usual hypotheses and makes $\zeta$ a stopping time. Define $\tau \dfn \zeta / 2$, and consider the filtration $\bG$ obtained as the progressive enlargement of $\bF$ with respect to $\tau$. Let $Z$ and $A$ be defined as in \S~\ref{subsec:main-progr}.

Note that $Z_t = 0$ holds on $\set{\zeta \leq t}$, while $Z_t = \exp(-t)$ holds on $\set{t < \zeta}$, the last fact following from $\tau = \zeta / 2$ and the memoryless property of the exponential law. Therefore, $Z_t = \exp(-t) \indic_{\set{t < \zeta}}$ is true for all $t \in \Real_+$. 
Similarly, $\Delta A_\sigma = \prob \bra{\tau = \sigma \such \F_\sigma} = \prob \bra{\zeta = 2 \sigma \such \F_\sigma} = 0$ is true for all bounded stopping times $\sigma$ on $\basis$, which implies that $\Delta A = 0$. Note that $\zeta = \inf \set{t \in \Real_+ \such Z_{t-} = 0 \text{ or } Z_t = 0}$ and $Z_{\zeta-} = \exp \pare{- \zeta} > 0$. Since $\Delta A = 0$, for $\eta$ defined as in \eqref{eta}, we obtain that $\eta = \zeta$. The predictable compensator of $\indic_{\dbraco{\eta, \infty}}$  on $\basisp$ is equal to $D \dfn (\eta \wedge t)_{t \in \Real_+}$; in particular, $\zeta = \eta$ is totally inaccessible on $\basisp$.

Here we have $\prob \bra{\eta<\infty} = 1$, hence we can proceed to construct a local martingale $S$ as in Theorem~\ref{thm:gen_progr}-(2). To wit, $S \dfn \Exp(-D)^{-1} \indic_{\dbraco{0, \eta}}=\exp(D) \indic_{\dbraco{0, \eta}}$, that is, $S_t = \exp (t) \indic_{\set{t < \zeta}}$ for $t \in \Real_+$. Note that $S$ is a quasi-left-continuous nonnegative martingale on $\basisp$, so that \naonef\ trivially holds. However, since $S$ is \emph{strictly} increasing up to $\tau$, \naoneg \ fails.

\subsubsection{An example under progressive filtration enlargement where $\eta$ is accessible}

Let $\probtriple$ be a complete probability space that supports an $\F$-measurable random variable $\zeta : \Omega \mapsto \Natural$ such that $p_k \dfn \prob \bra{\zeta = k} \in (0, 1)$ holds for all $k \in \Natural$, where $\sum_{k =1}^\infty p_k = 1$. Set $\bF = (\F_t)_{t \in \Real_+}$ to be the smallest filtration that satisfies the usual hypotheses and makes $\zeta$ a stopping time. Since $\zeta$ is $\Natural$-valued, it is an accessible time on $\basisp$. Define $\tau \dfn \zeta - 1$, and consider the progressively enlarged filtration $\bG$. Let $Z$ and $A$ be defined as in \S~\ref{subsec:main-progr}.

Again, one may compute $Z$ explicitly. In fact, $Z_t = 0$ holds on $\set{\zeta \leq t}$; furthermore, upon defining $q_k = \sum_{n = k + 1}^\infty p_n$ for all $k \in \set{0, 1, \ldots}$, and denoting by $\lceil \cdot \rceil$ the integer part, we have 
\[
Z_t = \prob \bra{\tau > t \such \F_t} = \prob \bra{\zeta > t + 1 \such \F_t} = \prob \bra{\zeta > \lceil t + 1 \rceil \such \F_t} = \frac{q_{\lceil t + 1\rceil}}{q_{\lceil t \rceil}}, \quad \text{on } \set{t < \zeta}.
\]
Note that $\zeta = \inf \set{t \in \Real_+ \such Z_{t-} = 0 \text{ or } Z_t = 0}$ and $Z_{\zeta-} = q_{\lceil \zeta \rceil} / q_{\lceil \zeta - 1 \rceil} > 0$.
Furthermore, $\Delta A_\zeta = \prob \bra{\tau = \zeta \such \F_\zeta} = 0$ holds true. It follows that, for $\eta$ defined as in \eqref{eta}, $\eta = \zeta$; in particular, $\eta$ is \emph{accessible} on $\basisp$.

\subsubsection{An example under initial filtration enlargement}  \label{subsubsec: ex_in}

Let us consider a probability space $\probtriple$ supporting a Poisson process $N$ with intensity $\lambda>0$ stopped at time $T\in(0,\infty)$. Let $\FF$ be the right-continuous filtration generated by $N$ and consider the random variable $J\dfn N_T$. As in \cite[\S~4.2]{GVV06} (compare also with \cite[\S~4.3]{MR1831271}), it can be checked that
\[
p^x_t = \e^{-\lambda t}\frac{\bigl(\lambda(T-t)\bigr)^{x-N_t}}{(\lambda T)^x}\frac{x!}{(x-N_t)!}\indic_{\{N_t\leq x\}},\quad \textrm{for all $t\in[0,T)$},
\]
and $p^x_T=\e^{-\lambda T}x!/(\lambda T)^x\indic_{\{N_T=x\}}$, so that Jacod's criterion (Assumption \ref{ass:Jac_sep}) is satisfied. 

Consider then the process $S$ defined by $S_t\dfn\exp\bigl(N_t-\lambda t(\e-1)\bigr)$, for all $t\in[0,T]$. The process $S$ is a strictly positive $\FF$-martingale (see e.g. \cite[Proposition 8.2.2.1]{JYC09}), so that \naone$(\FF,S)$ holds.
However, \naone$(\GG,S)$ does not hold. To see this, define the $\GG$-stopping time $\sigma\dfn\inf\set{t\in[0,T]\such N_t=N_T}$ and consider the strategy $-\indic_{\dbraoc{\sigma,T}}$. Then, for all $t\in[0,T]$, we get
\[
(-\indic_{\dbraoc{\sigma,T}}\cdot S)_t
= \indic_{\{t>\sigma\}}\exp\bigl(N_{\sigma}-\lambda\sigma(\e-1)\bigr)\Bigl(1-\exp\bigl(-\lambda(t-\sigma)(\e-1)\bigr)\Bigr).
\]
In particular, the process $-\indic_{\dbraoc{\sigma,T}}\cdot S$ is nondecreasing and $\PP\bra{\sigma<T}=1$, thus implying that \naone$(\GG,S)$ fails to hold.
Indeed, in the context of the present example, the processes $p^x$ have a positive probability to jump to zero and this event occurs exactly in correspondence of the jump times of the Poisson process $N$, thus showing that the condition $\PP\bra{\eta^x<\infty,\Delta S_{\eta^x}\neq0}=0$ for $\gamma$-a.e. $x\in E$ fail to hold.

\section{Arbitrage of the First Kind in Progressively Enlarged Filtrations}\label{sect.progr}

In this section, the proof of Theorem \ref{thm: NA1_G} and Theorem \ref{thm:gen_progr} will be given. In the process, we will also obtain certain interesting results concerning the behaviour (up to the random time $\tau$) of nonnegative super/local martingales on $\basisp$ in the enlarged filtration $\bG$ (see Section~\ref{subsec: enlarged_loc_marts}). In particular, these results do not follow from classical results of enlargement of filtrations theory.

\subsection{Representation pair associated with $\tau$}
The next result is \cite[Theorem 1.1]{Kar_14}.

\begin{thm} \label{thm: doleans sharpened}
For any random time $\tau$ on $\basis$ satisfying $\prob \bra{\tau=\infty}=0$ there exists a pair of processes $(K, L)$ with the following properties:
\begin{enumerate}
  \item $K$ is $\bF$-adapted, right-continuous, nondecreasing, with $0 \leq K \leq 1$.
  \item $L$ is a nonnegative local martingale on $\basisp$, with $L_0 = 1$.
  \item For any nonnegative optional processes $V$ on $\basis$, we have 
\begin{equation} \label{eq: main}
\expec[V_\tau] = \expec \bra{\int_{\zi} V_t \,  L_t \ud K_t}.
\end{equation}
  \item 
$\int_{\zi}\indic_{\set{K_{t-} = 1}} \ud L_t = 0$ and $\int_{\zi} \indic_{\set{L_t  = 0}} \ud K_t = 0$ hold $\prob$-a.s.
\end{enumerate}
\end{thm}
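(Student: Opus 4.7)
My plan is to exploit the fundamental identity
\[
\expec[V_\tau] = \expec\bra{\int_{\zi} V_t \, \ud A_t}
\]
for every nonnegative optional process $V$ on $\basis$, where $A$ is the dual optional projection of $\indic_{\dbraco{\tau, \infty}}$ on $\basisp$, and then to exhibit a factorization $\ud A = L\, \ud K$ of this random measure. Since $\mu := A + Z$ is a nonnegative uniformly integrable martingale on $\basisp$ with $\mu_0 = \expec[A_\infty] = \prob[\tau < \infty] = 1$, this martingale carries the local-martingale information, while a suitable scalar normalization will produce a process bounded in $[0, 1]$.

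Concretely, I would first introduce the $\bF$-adapted nondecreasing process
\[
C_t \dfn \int_{(0, t]} \frac{\indic_{\set{\mu_{s-} > 0}}}{\mu_{s-}} \, \ud A_s, \qquad \tir,
\]
and then set $1 - K \dfn \Exp(-C)$ in the sense of the Dol\'eans--Dade exponential, so that $K$ is $\bF$-adapted, right-continuous and nondecreasing. The pathwise bound $\Delta A_s \leq \mu_{s-}$ on $\set{\mu_{s-} > 0}$, inherited from $A \leq \mu$ combined with the predictable projection structure of $A$, guarantees that each jump factor of $\Exp(-C)$ lies in $[0, 1]$, whence $K \in [0, 1]$. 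Finally, define $L \dfn \mu / (1 - K)$ on $\set{1 - K > 0}$ and $L \dfn 0$ elsewhere, so that $L_0 = \mu_0 / 1 = 1$.

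Verification of properties (2)--(4) rests on an It\^o/integration-by-parts calculation applied to the identity $\mu = L(1 - K)$. The defining equation $\ud K = (1 - K_-) \, \ud C = (1 - K_-) \, (\indic_{\set{\mu_- > 0}}/\mu_-) \, \ud A$, combined with the martingale property of $\mu$, should cancel all finite-variation contributions to $\ud L$, exhibiting $L$ as a nonnegative local martingale; the identity $L\, \ud K = \ud A$ then follows by unwinding the same formula and using the compatibility between the jumps of $\mu$, $A$ and $K$. The two conditions in (4) are essentially automatic: $\ud K$ is supported on $\set{\mu_- > 0} \subseteq \set{L > 0}$, giving $\int \indic_{\set{L = 0}} \, \ud K = 0$; and $K_- = 1$ forces $1 - K_- = 0$, hence $\mu_- = 0$ and $L_- = 0$, after which $L$ remains at $0$ and $\int \indic_{\set{K_- = 1}} \, \ud L = 0$.

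The principal technical obstacle will be the jump-time algebra: establishing the pathwise bound $\Delta A_s \leq \mu_{s-}$ rather than merely in expectation, handling the simultaneous jumps of $A$, $Z$ and $\mu$ cleanly in the It\^o calculation to deduce the local-martingale property of $L$, and managing the boundary set $\set{1 - K = 0}$, where $L$ is truncated to zero but must still be consistent with the local-martingale framework.
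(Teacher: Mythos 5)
First, a point of comparison: the paper does not prove this theorem at all --- it is quoted verbatim from \cite{Kar_14}, where it is Theorem 1.1. Your overall strategy (factor the dual optional projection as $\ud A = L \, \ud K$ through a Dol\'eans-type multiplicative decomposition) is indeed the idea behind that result, but your concrete construction fails on two counts. (i) The claimed pathwise bound $\Delta A_s \leq \mu_{s-}$ is false. Take $N$ a Poisson process with rate $\lambda$, $\bF$ its filtration, and $\tau$ the last jump time of $N$ in $[0,1]$ (with $\tau := 0$ if there is none). Then $Z_t = \bigl(1 - \e^{-\lambda(1-t)}\bigr) \indic_{\set{t < 1}}$, $\Delta A_0 = \e^{-\lambda}$, and $\Delta A_{T_1} = \e^{-\lambda(1 - T_1)}$, $A_{T_1 -} = \e^{-\lambda}$ on $\set{T_1 \leq 1}$; hence on the positive-probability event $\set{T_1 < 1, \, 2\e^{-\lambda(1-T_1)} > 1 + \e^{-\lambda}}$ one has $\Delta A_{T_1} > Z_{T_1-} + A_{T_1-} = \mu_{T_1-}$. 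So your jump factor $1 - \Delta A_s / \mu_{s-}$ can be negative, $\Exp(-C)$ changes sign, and $K$ is neither $[0,1]$-valued nor nondecreasing. The correct normalisation divides by $\widetilde{Z} = Z + \Delta A$ (not by $\mu_-$, and not by $Z_-$ either, for the same reason), which gives jump factors $Z / \widetilde{Z} \in [0,1]$. (ii) Independently of the sign issue, you factor the martingale $\mu$ instead of the supermartingale $Z$, and this destroys properties (2) and (3). Already when $A$ and $\mu$ are continuous, your $L = \mu / (1-K) = \mu \, \e^{C}$ satisfies $\ud L = \e^{C} \ud \mu + \e^{C} \ud A$, a process with a nontrivial increasing drift, so $L$ is not a local martingale; the cancellation only occurs for $Z \e^{C}$, because the finite-variation part of $Z$ is exactly $- \ud A$. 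Likewise, at a jump time your definitions give $L_s \Delta K_s = \mu_s \Delta A_s / (\mu_{s-} - \Delta A_s)$, which equals $\Delta A_s$ only if $\Delta \mu_s = - \Delta A_s$; so $L \, \ud K \neq \ud A$ and \eqref{eq: main} fails.

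Even after the correct set-up ($1 - K = \Exp\bigl(-\int \ud A / \widetilde{Z}\bigr)$ and $L$ defined so that $Z = L(1-K)$, suitably extended beyond the zero of $Z$), the genuinely hard content of \cite[Theorem 1.1]{Kar_14} is precisely what your sketch defers to ``an It\^o calculation'': proving that $L$ is a local martingale globally, in particular at and after the time where $Z$ (and possibly $1-K$) vanishes, and establishing the two support properties in (4). Your argument for (4) is also not sound as stated: $K_- = 1$ does not force $\mu_- = 0$ or $L_- = 0$, since $1 - K$ can reach zero through a single vanishing jump factor while the martingale part stays strictly positive. As it stands, the proposal would need to be rebuilt around the decomposition $Z = L(1-K)$ with the $\widetilde{Z}$-normalisation, and the local-martingale and support statements proved rather than asserted; alternatively, simply cite \cite{Kar_14} as the paper does.
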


It also comes as part of the results in \cite[\S 1.1]{Kar_14} that $Z = L (1 - K)$, which gives a particular multiplicative \emph{optional} decomposition of $Z$. In general, there are many possible optional multiplicative decompositions; the properties described in Theorem~\ref{thm: doleans sharpened} specify the pair $(K, L)$ in a unique way.
Note also that, in the special case where $\prob\bra{\tau=\sigma}=0$ for every stopping time $\sigma$ on $\basisp$, the decomposition $Z=L(1-K)$ coincides with the multiplicative Doob-Meyer decomposition of the supermartingale $Z$ (see \cite[Remark 1.6]{Kar_14}).

\begin{rem}	\label{rem: cond_L_K}
Let $\sigma$ be a stopping time on $\basis$. For any $B \in \F_\sigma$, \eqref{eq: main} applied to the process $V = \indic_B \indic_{\dbraoo{\sigma, \infty}}$, combined with $Z = L(1- K)$ and the definition of $Z$, implies that
\[
\expec \bra{L_\sigma (1 - K_\sigma) \indic_B} = \expec \bra{Z_\sigma \indic_B} = \expec \bra{V_\tau} = \expec \bra{\indic_B \int_{(\sigma, \infty)} L_t \ud K_t}.
\]
Since the above equality holds for all $B \in \F_\sigma$, it follows that
\begin{equation} \label{eq: cond_L_K}
L_\sigma (1 - K_\sigma) = \expec \bra{\int_{(\sigma, \infty)} L_t \ud K_t \ \Big| \ \F_\sigma}.
\end{equation}
\end{rem}

\begin{rem} \label{rem: L_tau_pos}
Another use of \eqref{eq: main} gives
\[
\prob \bra{L_\tau = 0} = \expec \bra{\int_{\zi} \indic_{\set{L_t = 0}}  L_t \ud K_t} = 0.
\]
Since $L$ is a nonnegative local martingale on $\basisp$, it follows that $\dbra{0, \tau} \subseteq \set{L > 0}$.
\end{rem}

\begin{lem} \label{lem: event_eq}
For $\zeta$ defined in \eqref{zeta}, and $A$ denoting the dual optional projection of $\indic_{\dbraco{\tau, \infty}}$, the following set equality holds:
\begin{equation} \label{eq: event_eq}
\set{\zeta < \infty, \, Z_{\zeta -} > 0, \, \Delta A_{\zeta} = 0} = \set{\zeta < \infty, \ K_{\zeta - } < 1, \, L_{\zeta -} > 0, \, \Delta K_{\zeta} = 0}.
\end{equation}
Furthermore, $L_{\zeta} = 0$ holds on the above event.
\end{lem}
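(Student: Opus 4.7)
The plan is to translate everything in sight through the optional multiplicative decomposition $Z = L(1-K)$ and the identity $\Delta A_\sigma = L_\sigma \Delta K_\sigma$ on $\set{\sigma<\infty}$ for any stopping time $\sigma$ on $\basis$. The latter identity is obtained by feeding the optional process $V = \indic_B \indic_{\dbra{\sigma,\sigma}}$ (for arbitrary $B\in\F_\sigma$) into the formula \eqref{eq: main}: the left-hand side is $\expec[\indic_B \indic_{\set{\tau = \sigma < \infty}}] = \expec[\indic_B \Delta A_\sigma \indic_{\set{\sigma<\infty}}]$, while the right-hand side equals $\expec[\indic_B L_\sigma \Delta K_\sigma \indic_{\set{\sigma<\infty}}]$. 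Since $B\in\F_\sigma$ is arbitrary, $\Delta A_\sigma = L_\sigma \Delta K_\sigma$ on $\set{\sigma<\infty}$.

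With this in hand, I work on the event $\set{\zeta<\infty}$. Since $Z_{\zeta-} = L_{\zeta-}(1-K_{\zeta-})$ and both factors are nonnegative, the condition $Z_{\zeta-}>0$ is equivalent to $L_{\zeta-}>0$ together with $K_{\zeta-}<1$. Applying the identity above at $\sigma = \zeta$ gives $\Delta A_\zeta = L_\zeta \Delta K_\zeta$, so $\Delta A_\zeta = 0$ iff $L_\zeta = 0$ or $\Delta K_\zeta = 0$. Now by definition of $\zeta$, $Z_\zeta = L_\zeta(1-K_\zeta) = 0$, which on the event $\set{K_{\zeta-}<1,\,\Delta K_\zeta = 0}$ forces $K_\zeta = K_{\zeta-} < 1$ and hence $L_\zeta = 0$; this yields the inclusion $\supseteq$ in \eqref{eq: event_eq}, as well as the final assertion $L_\zeta = 0$ on the common event.

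For the reverse inclusion $\subseteq$, the only remaining scenario to exclude on $\set{\zeta<\infty,\,L_{\zeta-}>0,\,K_{\zeta-}<1}$ is $\set{L_\zeta = 0,\,\Delta K_\zeta > 0}$, which would produce $\Delta A_\zeta = 0$ through the vanishing of $L_\zeta$ rather than of $\Delta K_\zeta$. This is where property (4) of Theorem~\ref{thm: doleans sharpened} is crucial: the a.s.\ identity $\int_{\zi}\indic_{\set{L_t = 0}}\,\ud K_t = 0$ forces, in particular, $\Delta K_\zeta \indic_{\set{L_\zeta = 0}} = 0$ a.s., ruling out precisely that scenario. The identification that property (4) of the representation pair is exactly what removes this unwanted possibility is the only subtle point; once it is noticed, everything else is bookkeeping with $Z = L(1-K)$ and the jump identity for $A$.
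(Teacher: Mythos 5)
Your proof is correct, and its overall skeleton matches the paper's: factor $Z_{\zeta-} = L_{\zeta-}(1-K_{\zeta-})$ to handle the condition $Z_{\zeta-}>0$, translate $\Delta A_\zeta=0$ into $\Delta K_\zeta=0$, and read $L_\zeta=0$ off $0 = Z_\zeta = L_\zeta(1-K_\zeta)$ with $K_\zeta = K_{\zeta-}<1$. Where you differ is in how the equivalence between $\Delta A_\zeta=0$ and $\Delta K_\zeta=0$ is justified: the paper settles both implications by appealing to the explicit definition of $K$ in equation (1.1) of \cite{Kar_14}, whereas you stay inside the statement of Theorem~\ref{thm: doleans sharpened} as quoted here, first extracting the identity $\Delta A_\sigma = L_\sigma \Delta K_\sigma$ on $\set{\sigma<\infty}$ from \eqref{eq: main} applied to $V=\indic_B\indic_{\dbra{\sigma}}$ (combined with $\Delta A_\sigma = \prob\bra{\tau=\sigma \such \F_\sigma}$), and then using property (4), $\int_{\zi}\indic_{\set{L_t=0}}\ud K_t=0$, to rule out the scenario $L_\zeta=0$, $\Delta K_\zeta>0$ that would otherwise break the inclusion $\subseteq$. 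Your route costs a short extra argument (the jump identity at stopping times, which amounts to identifying $\ud A = L\,\ud K$) but makes the lemma self-contained relative to the properties (1)--(4) listed in this paper, without invoking the construction of $K$ from the cited reference; the paper's route is shorter but relies on that external formula. Both arguments are sound, and your correct identification of property (4) as the tool excluding the case $L_\zeta=0$, $\Delta K_\zeta>0$ is precisely the subtle point.
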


\begin{proof}
Since $Z = L (1 - K)$, $\set{\zeta < \infty, \, Z_{\zeta -} > 0} = \set{\zeta < \infty, \ K_{\zeta - } < 1, \, L_{\zeta -} > 0}$ is immediate. According to the definition of $K$ in \cite[equation (1.1)]{Kar_14}, it follows that, on $\set{\zeta < \infty}$, $\Delta A_{\zeta} = 0$ implies $\Delta K_{\zeta} = 0$. Furthermore, on $\set{\zeta < \infty, \, Z_{\zeta -} > 0}$, $\Delta K_{\zeta} = 0$ implies that $K_{\zeta} = K_{\zeta -} < 1$, which gives that $\Delta A_{\zeta} = 0$ upon using \cite[equation (1.1)]{Kar_14} again. The set-equality \eqref{eq: event_eq} has been established. Finally, note that the fact that $0 = Z_{\zeta} = L_{\zeta} (1 - K_{\zeta})$ implies that $L_{\zeta} = 0$ has to hold on $\set{\zeta < \infty, \ K_{\zeta - } < 1, \, L_{\zeta -} > 0, \, \Delta K_{\zeta} = 0}$.
\end{proof}

\subsection{Results regarding the stopping time $\eta$}

Recall that $\eta = \zeta \indic_\Lambda + \infty \indic_{\Omega \setminus \Lambda}$, where $\Lambda \dfn \set{\zeta < \infty, \, Z_{\zeta -} > 0, \, \Delta A_{\zeta} = 0}$. In view of \eqref{eq: event_eq}, $\Lambda = \set{\zeta < \infty, \ K_{\zeta - } < 1, \, L_{\zeta -} > 0, \, \Delta K_{\zeta} = 0}$. In the proof of the next result, it is established \emph{inter alia} that $\eta$ is not predictable, when finite.

\begin{lem} \label{lem: key-loc-mart}
Let $D$ be the predictable compensator of $\indic_{\dbraco{\eta, \infty}}$  on $\basisp$. Then:
\begin{enumerate}
	\item $\Delta D < 1$, $\PP$-a.s.; in particular, $\Exp (- D)$ is nonincreasing and strictly positive;
	\item the nonnegative process $\Exp (- D)^{-1} \indic_{\dbraco{0, \eta}}$ is a local martingale on $\basisp$.
\end{enumerate}
\end{lem}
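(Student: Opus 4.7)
The plan is to handle the two statements in sequence. Part (1) rests on the impossibility for a nonnegative local martingale to jump from a positive value to zero with full conditional probability at a predictable time; part (2) on identifying $\Exp(-D)^{-1} \indic_{\dbraco{0,\eta}}$ as $1$ minus a stochastic integral against the local martingale $M \dfn \indic_{\dbraco{\eta,\infty}} - D$.

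For (1), the strategy uses the standard identity $\Delta D_\sigma = \PP[\eta = \sigma \mid \F_{\sigma-}]$ valid for every predictable stopping time $\sigma$ on $\basisp$, together with the nonnegative local martingale $L$ from Theorem~\ref{thm: doleans sharpened}. Arguing by contradiction, if $B \in \F_{\sigma-}$ has $\PP(B) > 0$ and $\Delta D_\sigma = 1$ on $B$, then $\eta = \sigma$ $\PP$-a.s.\ on $B$, so Lemma~\ref{lem: event_eq} delivers $L_{\sigma-} > 0$ and $L_\sigma = 0$ on $B$. Localize $L$ via $T_n \uparrow \infty$ making each $L^{T_n}$ a uniformly integrable martingale and restrict to $B_{m,n} \dfn B \cap \{\sigma \leq T_n\} \cap \{L_{\sigma-} > 1/m\} \in \F_{\sigma-}$, which satisfies $\PP(B_{m,n}) > 0$ for $m, n$ large. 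The identity $\EE[L_\sigma \mid \F_{\sigma-}] = L_{\sigma-}$ valid at a predictable $\sigma$ for integrable $L_\sigma$ (applied to $L^{T_n}$) then yields $0 = \EE[L_\sigma \indic_{B_{m,n}}] = \EE[L_{\sigma-}\indic_{B_{m,n}}] \geq \PP(B_{m,n})/m > 0$, a contradiction. Hence $\Delta D < 1$ almost surely, so $\Exp(-D)_t = e^{-D^c_t}\prod_{s \leq t}(1-\Delta D_s)$ is strictly positive and nonincreasing.

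For (2), set $\alpha \dfn \Exp(-D)^{-1}$, which by (1) is positive, predictable, and nondecreasing. A direct jump-by-jump check yields $\Delta \alpha_t = \alpha_t \Delta D_t$ (from $\alpha_t = \alpha_{t-}/(1-\Delta D_t)$), which leads to the pathwise Stieltjes identity $\alpha_t = 1 + \int_0^t \alpha_s \, \ud D_s$. Since $D$ is stopped at $\eta$ (as the predictable compensator of the single-jump process $\indic_{\dbraco{\eta,\infty}}$), integration by parts applied to $L'_t = \alpha_t(1 - \indic_{\{t \geq \eta\}})$ produces
\[
L'_t \;=\; \alpha_{t \wedge \eta} - \alpha_\eta \indic_{\{\eta \leq t\}} \;=\; 1 - \int_0^t \alpha_s \, \ud M_s.
\]
To conclude, it remains to show that $\alpha \cdot M$ is a local martingale. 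Decompose $\int_0^t \alpha_s \, \ud M_s = \int_0^t \alpha_{s-} \, \ud M_s + \sum_{s \leq t} \Delta \alpha_s \Delta M_s$. The first term is a standard stochastic integral of a predictable, locally bounded integrand (localize $\alpha_-$ at $\inf\{t : \alpha_{t-} \geq n\}$) against the local martingale $M$, hence a local martingale. The second is a pure-jump process supported on predictable stopping times $\sigma$ (the jump times of $D$), where $\Delta \alpha_\sigma$ is $\F_{\sigma-}$-measurable and $\EE[\Delta M_\sigma \mid \F_{\sigma-}] = 0$ by the defining property of the predictable compensator $D$; hence $\EE[\Delta \alpha_\sigma \Delta M_\sigma \mid \F_{\sigma-}] = 0$, and this sum is itself a local martingale.

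The main delicate point is really in part (1): ruling out predictable jumps of $L$ from a strictly positive value straight to zero with conditional probability one. The uniform-integrability localization, together with the further restriction to $\{L_{\sigma-} > 1/m\}$, is the essential tool to turn this qualitative statement into the quantitative contradiction described above.
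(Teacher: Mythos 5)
Your proof is correct, and while part (2) essentially reproduces the paper's computation, your part (1) follows a genuinely different route. The paper rules out $\prob\bra{\eta=\sigma<\infty \such \F_{\sigma-}}=1$ by working with the uniformly integrable martingale $\mu=A+Z$: since $\sigma$ is predictable, $\expec\bra{\Delta(A+Z)_\sigma \such \F_{\sigma-}}=0$, and computing $\expec\bra{\Delta A_\sigma\such\F_{\sigma-}}$ in two ways (using $\Delta A_\eta=0$ and $\Delta Z_\eta=-Z_{\eta-}$) forces $\expec\bra{Z_{\eta-}\indic_{\set{\eta=\sigma<\infty}}\such\F_{\sigma-}}=0$, contradicting $Z_{\eta-}>0$. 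You instead invoke the pair $(K,L)$ of Theorem~\ref{thm: doleans sharpened} through Lemma~\ref{lem: event_eq}, so that $L$ is a nonnegative local martingale jumping from a strictly positive left limit to zero at $\eta$, and you derive the contradiction from the fairness of predictable times for martingales, after localising $L$ to uniformly integrable stopped martingales and truncating on $\set{L_{\sigma-}>1/m}$ (note that $B\cap\set{\sigma\leq T_n}\cap\set{L_{\sigma-}>1/m}$ is indeed in $\F_{\sigma-}$, as you use). Both arguments exploit the same phenomenon — a martingale cannot have an almost surely strictly negative jump at a predictable time on a set of positive probability — but the paper's choice of $\mu=A+Z$ buys a true uniformly integrable martingale and hence no localisation, and uses only $Z$ and $A$; your choice buys a cleaner ``positive left limit, zero value'' contradiction at the price of the extra localisation and of relying on Lemma~\ref{lem: event_eq}, which is legitimately available at that point of the paper. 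Two small presentational remarks: the passage from ``$\Delta D_\sigma<1$ for every predictable $\sigma$'' to ``$\Delta D<1$ up to evanescence'' should explicitly cite the predictable section theorem (as the paper does); and in part (2) your justification that the jump-sum $\sum_{s\leq\cdot}\Delta\alpha_s\Delta M_s$ is a local martingale via conditional expectations at predictable times is a touch informal — it is cleaner to observe that $\Delta\alpha=\alpha-\alpha_-$ is predictable and locally bounded (a c\`adl\`ag predictable process is locally bounded), so the whole integrand $\alpha$ can be handled at once, which is exactly the implicit step in the paper's one-line conclusion.
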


\begin{proof}
For any predictable time $\sigma$ on $\basis$, it holds that $\Delta D_\sigma = \prob \bra{\eta = \sigma \such \F_{\sigma -}}$ on $\set{\sigma < \infty}$ (see e.g. \cite[Theorem 5.27]{MR1219534}). In the next paragraph, we shall show that $\Delta D_\sigma < 1$ holds on $\set{\sigma < \infty}$ for any predictable time $\sigma$ on $\basis$. Then, the predictable section theorem implies that $\Delta D < 1$ $\PP$-a.s.; in particular, the process $\Exp (- D)^{-1} \indic_{\dbraco{0, \eta}}$ will be well-defined. This will establish part (1).

We proceed in showing that $\prob \bra{\eta = \sigma < \infty \such \F_{\sigma -} } < 1$ holds for any fixed predictable time $\sigma$ on $\basis$. Suppose that $\Sigma \dfn \set{\prob \bra{\eta = \sigma < \infty \such \F_{\sigma -} } = 1} \in \F_{\sigma -}$ is such that $\prob \bra{\Sigma} > 0$. Upon replacing $\sigma$ by the predictable time $\sigma_\Sigma \dfn \sigma \indic_{\Sigma} + \infty \indic_{\Omega \setminus \Sigma}$, we infer the existence of a predictable time $\sigma$ on $\basis$ such that $\prob \bra{\sigma < \infty} > 0$ and $\set{\sigma < \infty} = \set{\prob \bra{\eta = \sigma < \infty \such \F_{\sigma -} } = 1}$ hold. From the previous set-equality it follows that $\prob \bra{\eta = \sigma < \infty \such \F_{\sigma -} } = \indic_{\{\eta = \sigma < \infty\}}$, which in particular implies that $\set{\eta = \sigma < \infty} \in \F_{\sigma -}$. Therefore, since $\expec \bra{\Delta (A + Z)_\sigma \such \F_{\sigma-}} = 0$ holds on $\set{\sigma < \infty}$ (because $A + Z$ is a martingale on $\basisp$ and $\sigma$ is predictable on $\basis$),
\[
\expec \bra{\Delta A_\sigma  \such \F_{\sigma -}} = - \expec \bra{\Delta Z_\sigma \such \F_{\sigma -}} = - \expec \bra{\Delta Z_\eta  \such \F_{\sigma -}} = \expec \bra{ Z_{\eta-}  \such \F_{\sigma -}}, \quad \text{on } \set{\eta = \sigma < \infty},
\]
where in the last equality we have used the definition of $\eta$. On the other hand, using again the definition of $\eta$, we obtain that $\expec \bra{\Delta A_\sigma  \such \F_{\sigma -}} = \expec \bra{\Delta A_\eta  \such \F_{\sigma -}} = 0$ holds on $\set{\eta = \sigma < \infty}$. It follows that $\expec \bra{ Z_{\eta-}  \such \F_{\sigma -}} = 0$ on $\set{\eta = \sigma < \infty}$. Since $Z_{\eta -} > 0$ holds on $\set{\eta < \infty}$, the equality $\expec \bra{ Z_{\eta-} \indic_{\set{\eta = \sigma < \infty}} \such \F_{\sigma -}} = 0$ implies that $\prob \bra{\eta = \sigma < \infty} = 0$, which contradicts the fact that $\prob \bra{\sigma < \infty} > 0$ and $\set{\sigma < \infty} = \set{\prob \bra{\eta = \sigma < \infty \such \F_{\sigma -} } = 1}$ hold. Therefore, $\prob \bra{\eta = \sigma < \infty \such \F_{\sigma -} } < 1$ holds for any predictable time $\sigma$ on $\basis$.

We continue in establishing part (2). Let $I = \indic_{\dbraco{\eta, \infty}}$, so that $I - D$ is a local martingale on $\basisp$. Integration-by-parts gives
\[
\Exp (- D)^{-1} \indic_{\dbraco{0, \eta}}  = 1- \int_0^\cdot \Exp (- D)_t^{-1} \ud I_t + \int_0^\cdot \pare{1 - I_{t-} } \ud \Exp (- D)_t^{-1}   = - \int_0^\cdot \Exp (- D)_t^{-1} \ud I_t + \Exp (- D)^{-1},
\]
where the second equality follows from the facts that $1 - I_- = \indic_{\dbra{0, \eta}}$ and $\Exp (- D)^{-1}$ is constant on $\dbraco{\eta, \infty}$. Using It\^o's formula (actually, integration theory for finite-variation processes is sufficient), it is straightforward to check that 
\[
\Exp (- D)^{-1} = 1 + \int_0^\cdot \Exp(- D)^{-1}_t \ud D_t.
\]
It then follows that
\[
\Exp (- D)^{-1} \indic_{\dbraco{0, \eta}}  = 1 - \int_0^\cdot \Exp (- D)_t^{-1} \ud \pare{I - D}_t,
\]
which concludes the argument in view of the fact that $I - D$ is a local martingale on $\basisp$.
\end{proof}

We write $\qprob \sim \prob$ whenever $\qprob$ is a probability that is equivalent to $\prob$ on $\F$. Note that all the quantities that we have defined and depend on $\tau$ (in particular, $\eta$) depend on the underlying probability measure. For establishing Theorem~\ref{thm: NA1_G}, it is important that $\eta$ remains invariant under equivalent changes of probability. The next result ensures that this is indeed the case.

\begin{lem} \label{lem: eta_P_Q}
Let $\qprob \sim \prob$, and let $\eta^\qprob$ be the stopping time on $\basis$ defined under $\qprob$ in analogy to $\eta \equiv \eta^\prob$ defined in \eqref{eta} under $\prob$. Then $\eta^\qprob = \eta$ holds almost surely (under both $\prob$ and $\qprob$).
\end{lem}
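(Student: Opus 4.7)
The plan is to verify, under the equivalence $\QQ\sim\PP$, the invariance of each of the three ingredients defining $\Lambda=\{\zeta<\infty,\,Z_{\zeta-}>0,\,\Delta A_{\zeta}=0\}$, so that---since $\eta=\zeta\indic_\Lambda+\infty\indic_{\Omega\setminus\Lambda}$---the desired identity $\eta^{\PP}=\eta^{\QQ}$ almost surely will follow. The starting observation is that, for every $A\in\F$ and every $t\in\R_+$, the $\F_t$-measurable events $\{\PP[A\,|\,\F_t]=0\}$ and $\{\QQ[A\,|\,\F_t]=0\}$ coincide up to null sets: both characterise, up to null sets, the largest $\F_t$-measurable subset of $\{\indic_A=0\}$, and $\PP$ and $\QQ$ share the same null sets. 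Applied to $A=\{\tau>t\}$ this yields $\{Z^{\PP}_t=0\}=\{Z^{\QQ}_t=0\}$ up to null sets for each $t$. Since $\{Z=0\}$ is absorbing for the nonnegative supermartingale $Z$ and $Z$ is right-continuous, $\{\zeta^{\PP}\leq t\}=\{Z^{\PP}_t=0\}$ for each $t$, and analogously under $\QQ$; hence $\zeta^{\PP}=\zeta^{\QQ}$ almost surely, and I denote the common value by $\zeta$.

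Second, $\tau\leq\zeta$ almost surely (since $\PP[\tau>\zeta\,|\,\F_{\zeta}]=Z_{\zeta}=0$, and likewise under $\QQ$). Combined with $\Delta A^{\PP}_{\zeta}=\PP[\tau=\zeta\,|\,\F_{\zeta}]$ on $\{\zeta<\infty\}$, this gives $\{\Delta A^{\PP}_{\zeta}=0\}\cap\{\zeta<\infty\}=\{\tau<\zeta<\infty\}$ up to $\PP$-null sets. Since the event $\{\tau<\zeta<\infty\}$ is intrinsic and the same identification holds under $\QQ$, the condition $\Delta A_{\zeta}=0$ is measure-invariant up to null sets.

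The subtle remaining step is to show that $\{Z_{\zeta-}>0\}$ is measure-invariant, the complication being that this involves a left limit of the measure-dependent process $Z$. My plan here is to exploit the representation pair $(K,L)$ of Theorem~\ref{thm: doleans sharpened} and its $\QQ$-analogue $(K^{\QQ},L^{\QQ})$. Letting $M$ denote the density process of $\QQ$ with respect to $\PP$ and writing $\expec^{\QQ}[V_\tau]$ in two ways via~\eqref{eq: main}---once directly under $\QQ$, and once under $\PP$ using $\expec^{\QQ}[V_\tau]=\expec[V_\tau M_\tau]$ together with~\eqref{eq: main} applied to the optional process $VM$, combined with a Fubini-type identity that replaces $M_{\infty}$ under the integral by its optional projection $M_t$---produces the random-measure identity $L\,dK=L^{\QQ}\,dK^{\QQ}$ on $\Omega\times\R_+$. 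Together with property~(4) of Theorem~\ref{thm: doleans sharpened} applied under both measures, this forces $dK$ and $dK^{\QQ}$ to have matching support. Via Lemma~\ref{lem: event_eq}, the event $\{\zeta<\infty,\,Z_{\zeta-}>0\}$ rewrites as $\{\zeta<\infty,\,K_{\zeta-}<1,\,L_{\zeta-}>0\}$, and propagating the above measure-identification to this left-limit event (using that $L$ and $L^{\QQ}$ are strictly positive on the supports of the respective random measures up to $\zeta$) yields equality with the $\QQ$-analogue up to null sets. Combining the three steps gives $\Lambda^{\PP}=\Lambda^{\QQ}$ up to null sets, and the claim follows. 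The main hurdle is precisely this third step, where converting an equality of random measures into an equality of left-limit events requires care.
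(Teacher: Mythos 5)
Your step 3 --- which you correctly identify as the crux --- does not work as proposed. The identity $\expec_{\qprob}\bra{V_\tau}=\expec\bra{V_\tau M_\tau}$ is false in general: $\tau$ is \emph{not} a stopping time of $\bF$, so the density $D:=\ud\qprob/\ud\prob$ cannot be replaced by the value $M_\tau$ of its density process at $\tau$ (this failure is precisely what makes the progressive enlargement nontrivial), and the process $(t,\omega)\mapsto V_t(\omega)D(\omega)$ is not $\bF$-optional, so \eqref{eq: main} cannot be applied to it either. Consequently the asserted random-measure identity $L\,\ud K=L^{\qprob}\,\ud K^{\qprob}$ is simply wrong: take $\bF$ trivial, so that $L= L^{\qprob}\equiv1$, $K_t=\prob\bra{\tau\le t}$ and $K^{\qprob}_t=\qprob\bra{\tau\le t}$; choosing $D$ a nonconstant function of $\tau$ makes $\ud K\neq\ud K^{\qprob}$. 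So the measure-invariance of $\set{Z_{\zeta-}>0}$ is not established, and the subsequent ``propagation to left-limit events'' is in any case only sketched. The paper never touches the pair $(K,L)$ here: it shows $\set{Z>0}=\set{Z^{\qprob}>0}$ modulo evanescence by the optional section theorem (your step 1 is an elementary variant of this) and handles the jump of the dual optional projection at $\eta$ simply via $\prob\bra{\tau=\eta}=0\Leftrightarrow\qprob\bra{\tau=\eta}=0$. For the left-limit invariance that you are after, a working route in the same spirit is via predictable projections: for every predictable time $\sigma$ one has $Z_{\sigma-}=\prob\bra{\tau\ge\sigma\such\F_{\sigma-}}$ and $Z^{\qprob}_{\sigma-}=\qprob\bra{\tau\ge\sigma\such\F_{\sigma-}}$ on $\set{\sigma<\infty}$, whence $\set{Z_{-}>0}=\set{Z^{\qprob}_{-}>0}$ modulo evanescence by the predictable section theorem, and evaluating this indistinguishability along the graph of $\zeta$ gives $\set{\zeta<\infty,\,Z_{\zeta-}>0}=\set{\zeta<\infty,\,Z^{\qprob}_{\zeta-}>0}$ up to null sets.

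There is also a (fixable) error in your step 2: the claimed equality $\set{\Delta A_{\zeta}=0}\cap\set{\zeta<\infty}=\set{\tau<\zeta<\infty}$ modulo $\prob$-null sets is false; only the inclusion ``$\subseteq$'' holds. For a counterexample, take $\bF$ trivial and $\tau$ with an atom at a deterministic time $\zeta$ plus diffuse mass before it: then $\Delta A_{\zeta}=\prob\bra{\tau=\zeta}\in(0,1)$ is a nonzero constant, while $\set{\tau<\zeta}$ has positive probability, so the event $\set{\tau<\zeta<\infty}$ is not contained in $\set{\Delta A_{\zeta}=0}$. The conclusion of that step (invariance of the condition $\Delta A_{\zeta}=0$) is nevertheless correct and follows from your own opening observation: once $\zeta^{\prob}=\zeta^{\qprob}=:\zeta$ is known, apply the invariance of zero sets of conditional probabilities with the $\sigma$-field $\F_{\zeta}$ and the event $\set{\tau=\zeta}$; alternatively, argue at $\eta$ as the paper does, using $\prob\bra{\tau=\eta}=0$.
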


\begin{proof}
Denote by $Z^\qprob$ the Az\'ema supermartingale associated with $\tau$ on $\basisq$. We claim that $\{Z^\qprob>0\}=\{Z>0\}$ holds modulo evanescence. Indeed, this follows from the optional section theorem, upon noting that 
\[
\set{Z_\sigma = 0} = \set{\prob \bra{\tau > \sigma \such \F_{\sigma}} = 0} = \set{\qprob \bra{\tau > \sigma \such \F_{\sigma}} = 0} =  \big\{ Z^\qprob_{\sigma} = 0 \big \}
\]
holds for all bounded stopping times $\sigma$ on $\basis$, where the second set-equality holds because $\qprob \sim \prob$.
In particular, $Z_\eta=0$ and $Z_{\eta-}>0$ imply $Z^\qprob_\eta=0$ and $Z^\qprob_{\eta-}>0$.
Now denote by $A^\qprob$ the dual optional projection of $\indic_{\dbraco{\tau, \infty}}$ on $\basisq$. Since $\qprob \sim \prob$ and $\prob \bra{\tau = \eta}=0$, it follows that $\Delta A^\qprob_\eta=\qprob \bra{\tau = \eta \such \F_{\eta}}=0$. Together with the previous observation, this implies $\eta^\qprob\leq\eta$. Upon interchanging the roles of $\prob$ and $\qprob$, one obtains the reverse inequality, completing the proof.
\end{proof}

\subsection{Super/local martingales in the progressively enlarged filtration} \label{subsec: enlarged_loc_marts}

The next result, which will be key in the development, is also of independent interest. 

\begin{prop} \label{prop: loc_mart at random time}
The following statements hold true:
\begin{enumerate}
\item
Let $X$ be a nonnegative supermartingale on $\basisp$. Then, the process $X^{\tau} / L^{\tau}$  is a supermartingale on $\basisgp$.
\item
Let $X$ be a nonnegative local martingale on $\basisp$ such that $\dbraco{\eta, \infty} \, \subseteq \set{X = 0}$ holds (modulo evanescence). Then, the process $X^{\tau} / L^{\tau}$ is a local martingale on $\basisgp$.
\end{enumerate}
\end{prop}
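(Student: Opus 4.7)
My plan is to establish both parts via a single master identity for $Y := X^\tau/L^\tau$, which is well-defined everywhere thanks to $\dbra{0,\tau} \subseteq \set{L > 0}$ (Remark~\ref{rem: L_tau_pos}). Fix $0 \le s \le t$ and $A \in \G_s$, and pick $A_s \in \F_s$ with $A \cap \set{\tau > s} = A_s \cap \set{\tau > s}$, as provided by \eqref{Gfiltr}. Since $Y_t = Y_s$ on $\set{\tau \le s}$, the analysis reduces to $\set{\tau > s}$, which I split into $\set{\tau > t}$ and $\set{s < \tau \le t}$. On the first, conditioning on $\F_t$ and using $Z_t = L_t(1-K_t)$ gives $\expec\bra{X_t(1-K_t) \indic_{A_s}}$. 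On the second, applying equation~\eqref{eq: main} to the nonnegative optional process $V_u := (X_u/L_u)\indic_{A_s}\indic_{(s,t]}(u)$---using part~(4) of Theorem~\ref{thm: doleans sharpened} to ignore $\set{L=0}$---gives $\expec\bigl[\indic_{A_s}\int_{(s,t]} X_u\,\ud K_u\bigr]$. Since also $\expec\bra{Y_s \indic_A \indic_{\set{\tau > s}}} = \expec\bra{X_s(1-K_s)\indic_{A_s}}$, an integration-by-parts computation for the finite variation process $K$, namely $X_t(1-K_t) - X_s(1-K_s) + \int_{(s,t]} X_u\,\ud K_u = \int_{(s,t]} (1-K_{u-})\,\ud X_u$, yields the master identity
\begin{equation*}
\expec\bra{Y_t \indic_A} - \expec\bra{Y_s \indic_A} = \expec\Bigl[\indic_{A_s}\int_{(s,t]} (1-K_{u-})\,\ud X_u\Bigr].
\end{equation*}

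For part~(1), since $1 - K_-$ is predictable with values in $[0,1]$, the process $(1-K_-) \cdot X$ is a local $\bF$-supermartingale. Evaluating the integration-by-parts identity at $s=0$ forces $(1-K_-) \cdot X \ge -X_0(1-K_0) \ge -X_0$, so this local supermartingale is bounded below by an integrable $\F_0$-measurable random variable. Fatou's lemma then upgrades it to a genuine $\bF$-supermartingale, rendering the right-hand side of the master identity nonpositive for every $A_s \in \F_s$ and yielding the $\bG$-supermartingale property of $Y$.

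For part~(2), when $X$ is a nonnegative $\bF$-local martingale, $(1-K_-) \cdot X$ is itself a local $\bF$-martingale. The plan is to choose an $\bF$-localizing sequence $(T_n)_{\nin}$, $T_n \uparrow \infty$, such that $X^{T_n}$ lies in the Hardy space $H^1$; the bounded integrand $1 - K_-$ then preserves this property, so that $\bigl((1-K_-) \cdot X\bigr)^{T_n} = (1-K_-) \cdot X^{T_n}$ is a genuine UI $\bF$-martingale. Each $T_n$ is then a $\bG$-stopping time for which the (suitably adapted) master identity applied to the stopped process $Y^{T_n}$ produces a vanishing right-hand side, giving the $\bG$-local martingale property. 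The main technical obstacle is that the naive application of the master identity to $Y^{T_n}$ calls for substituting $X^{T_n}$ in place of $X$, yet $Y^{T_n} = X^{T_n \wedge \tau}/L^{T_n \wedge \tau}$ does not coincide with $(X^{T_n})^\tau/L^\tau$ on $\set{\tau > T_n}$, because $Y$ stops both numerator and denominator at $\tau$. The hypothesis $\dbraco{\eta,\infty} \subseteq \set{X=0}$---equivalent to $X = X^\eta$---combined with $\tau \le \eta$ almost surely is precisely the ingredient that reconciles the two expressions after localization. Its necessity is visible from the case $X \equiv 1$: then $Y = 1/L^\tau$, which is typically not a $\bG$-local martingale.
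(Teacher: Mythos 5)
Your ``master identity'' is false, and its failure is precisely the crux of the proposition. On $\set{\tau>t}$ one has $L_t>0$ a.s., so conditioning on $\F_t$ gives $\expec\bra{\indic_{A_s}\indic_{\set{\tau>t}}X_t/L_t}=\expec\bra{\indic_{A_s}X_t\indic_{\set{L_t>0}}(1-K_t)}$, \emph{not} $\expec\bra{\indic_{A_s}X_t(1-K_t)}$; the same indicator is missing from your time-$s$ term. The discrepancy $\expec\bra{\indic_{A_s}X_t\indic_{\set{L_t=0}}(1-K_t)}$ is nonzero in general: with $X\equiv1$ your identity has vanishing right-hand side and would make $1/L^\tau$ a $\bG$-martingale for \emph{every} $\tau$, contradicting your own closing remark (and Proposition~\ref{mart-progr}, which shows this holds iff $\prob\bra{\eta<\infty}=0$). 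For part (1) the damage is repairable: keeping the indicators, bounding $\indic_{\set{L_t>0}}\leq\indic_{\set{L_s>0}}$, and noting that $K$ does not charge $\set{L=0}$ (Theorem~\ref{thm: doleans sharpened}(4)), your computation yields the inequality $\expec\bra{Y_t\indic_A}-\expec\bra{Y_s\indic_A}\leq\expec\bra{\indic_{A_s\cap\set{L_s>0}}\int_{(s,t]}(1-K_{u-})\,\ud X_u}$, and your observation that $(1-K_-)\cdot X$ is a local supermartingale bounded below by the integrable variable $-X_0$, hence a true supermartingale, then finishes part (1); that step is a legitimate and slightly slicker substitute for the paper's Doob--Meyer/$\mathcal{H}^1$/Fatou argument.

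Part (2), however, has a genuine gap. Since the relation is only an inequality, making the stochastic-integral term have zero expectation is not enough: one must also kill the correction term $\expec\bra{\indic_{A_s}X_t\indic_{\set{L_t=0}}(1-K_t)}$, and this is exactly where the hypothesis $\dbraco{\eta,\infty}\subseteq\set{X=0}$ enters. It can only enter through the set identity $\set{L_\sigma=0,\,K_\sigma<1}=\set{\sigma=\eta}$, which holds for stopping times $\sigma\leq\zeta_n$ (Lemma~\ref{lem: event_eq}); hence the localizing times must be capped at $\zeta_n$, as in the paper's choice $\sigma_n=\inf\set{t\in\Real_+ \such [X,X]_t>n}\wedge\zeta_n$, with the conclusion reached because $Y$ is stopped at $\tau$ and $\prob\bra{\zeta_n<\tau}\to0$. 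Your $T_n\uparrow\infty$ with $X^{T_n}\in\mathcal{H}^1$ has no such cap, so the correction term survives on the event where $L$ reaches zero without $\eta$ occurring (e.g.\ continuously), where the hypothesis is silent. Moreover, the obstacle you do flag is not resolved by the hypothesis as you claim: on $\set{\tau>T_n}$ one has $Y^{T_n}=X_{T_n}/L_{T_n}$ while $(X^{T_n})^\tau/L^\tau$ carries $L_{\tau\wedge\cdot}$ in the denominator, and $X=X^\eta$ says nothing about $L$ after $T_n$. The paper sidesteps this mismatch entirely: part (1) applied to $X$ itself makes $Y^{\sigma_n}$ a $\bG$-supermartingale by stopping, and then \eqref{eq: main} is used to compute $\expec\bra{X_{\tau\wedge\sigma_n}/L_{\tau\wedge\sigma_n}}=\expec\bra{X_0}$ directly, which is the step where the hypothesis removes the indicator $\indic_{\set{L_{\sigma_n}>0}}$. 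As it stands, your part (2) is a plan whose central computation is neither carried out nor correctly located.
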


\begin{proof}
Note first that, by Remark~\ref{rem: L_tau_pos}, $1 / L^\tau$ is well defined. 
If $X$ is a nonnegative supermartingale on $\basisp$, the Doob-Meyer decomposition gives that $X=N-B$, where $N$ is a (non-negative) local martingale on $\basisp$ and $B$ is an increasing predictable process on $\basis$ with $B_0=0$. 
Let $s<t$ and $G\in\G_s$. By \eqref{Gfiltr} there exists a set $G_s\in\F_s$ such that $G\cap\set{\tau>s}=G_s\cap\set{\tau>s}$. Define then the nonnegative optional process $Y \dfn \indic_{G_s}\indic_{\dbraoo{s,\infty}}(X^{t}/L^{t})\indic_{\set{L^{t}>0}}$ on $\basis$, so that $\indic_{G\cap\set{\tau>s}}X^{\tau}_t/L^{\tau}_t=Y_{\tau}$. 
In view of Theorem~\ref{thm: doleans sharpened}, it follows that
\begin{equation}	\label{eq:loc_mart_new_proof_1}	\begin{aligned}
\expec\left[Y_{\tau}\right]
&= \expec\left[\int_{[0,\infty)} Y_u L_u \ud K_u\right] \\
&= \expec\left[\indic_{G_{s}}\int_{(s,t]}\frac{X_u}{L_u} \indic_{\{L_u>0\}}L_u \ud K_u
+ \frac{X_{t}}{L_{t}}\indic_{G_{s}\cap\set{L_{t}>0}}\int_{(t,\infty)} L_u \ud K_u\right]	\\
&= \expec\left[\indic_{G_{s}}\int_{(s,t]}X_u\indic_{\{L_u>0\}} \ud K_u
+ \frac{X_{t}}{L_{t}}\indic_{G_{s}\cap\set{L_{t}>0}}L_{t}(1-K_{t})\right] \\
&= \expec\left[\indic_{G_{s}}\left(\int_{(s,t]}X_u \ud K_u
+ X_{t}\indic_{\set{L_{t}>0}}(1-K_{t})\right)\right],
\end{aligned}	\end{equation}
where \eqref{eq: cond_L_K} was used in the third equality above. 
Noting that $\set{L_{t}>0}\subseteq\set{L_{s}>0}$, integration-by-parts then implies that
\begin{equation}	\label{eq:loc_mart_new_proof_2}	\begin{aligned}
\expec\left[Y_{\tau}\right]
& \leq \expec\left[\indic_{G_{s}\cap\set{L_{s}>0}}\left(\int_{(s,t]}X_u \ud K_u
+ X_{t}(1-K_{t})\right)\right] \\
&= \expec\left[\indic_{G_{s}\cap\set{L_{s}>0}}\left(X_{s}(1-K_{s})+\int_{(s,t]}(1-K_{u-}) \ud X_u\right)\right].	
\end{aligned}	\end{equation}
Furthermore, since $0\leq K\leq 1$ and the process $B$ is increasing, it holds that 
\begin{equation}	\label{eq:loc_mart_new_proof_4}
\int_{(s,t]}(1-K_{u-})\ud X_u=\int_{(s,t]}(1-K_{u-})\ud N_u-\int_{(s,t]}(1-K_{u-})\ud B_u	
\leq \int_{(s,t]}(1-K_{u-})\ud N_u
\end{equation}
and $\bigl[(1-K_-) \cdot N, \, (1-K_-) \cdot N\bigr] \leq [N, N]$.
Suppose first that $N\in\mathcal{H}^1$, i.e., $\expec\bigl[[N,N]_{\infty}^{1/2}\bigr]<\infty$, from which it follows that
\begin{equation}	\label{eq:loc_mart_new_proof_3}
\expec \bra{\bigl[(1-K_-) \cdot N, \, (1-K_-) \cdot N\bigr]^{1/2}_{\infty} } \leq \expec \bra{ [N, N]_{\infty}^{1/2} } < \infty.
\end{equation}
Together with \eqref{eq:loc_mart_new_proof_4}, this implies that $\expec\left[\indic_{G_s\cap\set{L_s>0}}\int_{(s,t]}(1-K_{u-})\ud X_u\right]\leq 0$. 
Hence, due to \eqref{eq:loc_mart_new_proof_2},
\begin{align*}
 \expec\left[\indic_{G_{s}\cap\set{\tau>s}}\frac{X^{\tau}_t}{L^{\tau}_t}\right]
&= \expec\left[Y_{\tau}\right]
 \leq \expec\left[\indic_{G_{s}\cap\set{L_{s}>0}}X_{s}(1-K_{s})\right] 
= \expec\left[\indic_{G_{s}\cap\set{L_{s}>0}}\frac{X_{s}}{L_{s}}L_{s}(1-K_{s})\right]	\\
&= \expec\left[\indic_{G_{s}\cap\set{L_{s}>0}}\frac{X_{s}}{L_{s}}\indic_{\set{\tau>s}}\right]
= \expec\left[\indic_{G_{s}\cap\set{\tau>s}}\frac{X_{s}}{L_{s}}\right],
\notag
\end{align*}
where $L_{s}(1-K_{s})=Z_{s}=\prob[\tau>s \such \F_{s}]$ and $\dbra{0,\tau}\subseteq\set{L>0}$ were used in the last line.
Since
\begin{align*}
\expec\left[\indic_G\frac{X^{\tau}_t}{L^{\tau}_t}\right]
&= \expec\left[\indic_{G_{s}\cap\set{\tau>s}}\frac{X^{\tau}_t}{L^{\tau}_t}\right]
+ \expec\left[\indic_{G\cap\set{\tau\leq s}}\frac{X^{\tau}_s}{L^{\tau}_s}\right] \\
&\leq \expec\left[\indic_{G_{s}\cap\set{\tau>s}}\frac{X^{\tau}_s}{L^{\tau}_s}\right]
+ \expec\left[\indic_{G\cap\set{\tau\leq s}}\frac{X^{\tau}_s}{L^{\tau}_s}\right]
= \expec\left[\indic_G\frac{X^{\tau}_s}{L^{\tau}_s}\right],
\end{align*}
we have thus proved that $X^{\tau}/L^{\tau}$ is a supermartingale on $\basisgp$. The general case follows by localization. In fact, by~\cite[Theorem IV.51]{MR1037262}, every local martingale $N$ on $\basisp$ admits a nondecreasing sequence $(\sigma_n)_{\nin}$ of stopping times (under $\bF$ and, \emph{a fortiori}, under $\bG$) $\prob$-a.s. converging to infinity such that $N^{\sigma_n}\in\mathcal{H}^1$ for all $\nin$. The preceding arguments imply that $X^{\sigma_n\wedge\tau}/L^{\tau}$ is a supermartingale on $\basisgp$ for all $\nin$ and statement (1) of the proposition then follows by Fatou's lemma.

In order to prove part (2), define the nondecreasing sequence $(\sigma_n)_{\nin}$ of stopping times via $\sigma_n \dfn \inf \set{t \in \Real_+ \such  [X, X]_t > n} \wedge \zeta_n$, for all $\nin$. For future reference, note that $\sigma_n \leq \zeta \leq \eta$ holds for all $\nin$.
It is straightforward to check that $\limn \prob \bra{\zeta_n < \tau} = 0$; therefore, in order to prove the result, it suffices to show that $X^{\tau \wedge \sigma_n} / L^{\tau \wedge \sigma_n}$ is a martingale on $\basisgp$ for all $\nin$. 
By part (1), the process $X^{\tau \wedge \sigma_n} / L^{\tau \wedge \sigma_n}$ is a supermartingale on $\basisgp$ for all $\nin$.
It follows that it suffices to show that $\expec \bra{X_{\tau  \wedge \sigma_n} / L_{\tau  \wedge \sigma_n}} = \expec\bra{X_0}$ holds for all $\nin$. 
Similarly as in the first part of the proof, set $Y \dfn (X / L) \indic_{\set{L > 0}}$, and note that $Y^{\sigma_n}$ is optional on $\basis$ and $X_{\tau \wedge \sigma_n} / L_{\tau \wedge \sigma_n} = Y^{\sigma_n}_{\tau}$ holds for all $\nin$. 
Computations analogous to \eqref{eq:loc_mart_new_proof_1} allow then to show that
\begin{equation}	\label{comp-mart-progr}
\expec \bra{ \frac{X_{\tau \wedge \sigma_n}}{L_{\tau \wedge \sigma_n}}} 
=\expec\bra{Y^{\sigma_n}_{\tau}}
= \expec \bra{\int_{[0,\sigma_n]}  X_t  \ud K_t +  X_{\sigma_n} \indic_{\set{L_{\sigma_n} > 0}} (1 - K_{\sigma_n}) }.
\end{equation}
Note that $X_{\sigma_n} \indic_{\set{L_{\sigma_n} = 0}} (1 - K_{\sigma_n}) = 0$ holds for all $\nin$; indeed, this follows from Lemma~\ref{lem: event_eq} since $\set{L_{\sigma_n} = 0, \, K_{\sigma_n} < 1} = \set{\sigma_n = \eta}$  holds for all $\nin$. Therefore, similarly as in \eqref{eq:loc_mart_new_proof_2}, integration-by-parts yields that
\[
\expec \bra{ \frac{X_{\tau \wedge \sigma_n}}{L_{\tau \wedge \sigma_n}}} = \expec \bra{\int_{[0,\sigma_n]}  X_t  \ud K_t +  X_{\sigma_n}  (1 - K_{\sigma_n}) } 
= \expec \bra{\int_{[0,\sigma_n]}  (1-K_{t-})  \ud X_t  }
=\expec\bra{X_0},
\]
where the last equality makes use of inequality \eqref{eq:loc_mart_new_proof_3} (now applied with respect to the martingale $X^{\sigma_n}$, with the convention $K_{0-}=0$), for all $\nin$. This completes the argument.
\end{proof}

Proposition \ref{prop: loc_mart at random time} shows that, up to a normalisation with respect to $1/L^{\tau}$, the supermartingale property can always be transferred from the original filtration $\bF$ to the enlarged filtration $\bG$ and provides a sufficient criterion for transforming $\bF$-local martingales into $\bG$-local martingales. As shown in Section~\ref{subsec:proof_main_prog}, this result will play a key role in proving Theorem \ref{thm: NA1_G}. 

In the rest of this section we provide a couple of interesting side results which, though not used in the sequel, are intimately connected to Proposition \ref{prop: loc_mart at random time}. The first one provides a characterisation of the  local martingale property of $X^{\tau}/L^{\tau}$ on $\basisgp$ for \emph{every} nonnegative local martingale $X$ on $\basisp$.

\begin{prop}	\label{mart-progr}
The following statements are equivalent:
\begin{enumerate}
\item For every nonnegative local martingale $X$ on $\basisp$, the process $X^{\tau}/L^{\tau}$ is a local martingale on $\basisgp$.
\item The process $1/L^{\tau}$ is a local martingale on $\basisgp$.
\item $\prob[\eta < \infty]=0$.
\end{enumerate}
\end{prop}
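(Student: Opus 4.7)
I will establish the three implications separately.

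For \emph{$(1)\Rightarrow(2)$}, take $X\equiv 1$; this is a nonnegative local martingale on $\basisp$, so (2) follows from (1). For \emph{$(3)\Rightarrow(1)$}, observe that the assumption $\prob[\eta<\infty]=0$ renders $\dbraco{\eta,\infty}$ evanescent, so the hypothesis $\dbraco{\eta,\infty}\subseteq\set{X=0}$ of Proposition~\ref{prop: loc_mart at random time}(2) holds vacuously for every nonnegative $\bF$-local martingale $X$.

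The crux is \emph{$(2)\Rightarrow(3)$}, which I plan to establish by contradiction. Suppose $M:=1/L^\tau$ is a $\bG$-local martingale while $\prob[\Lambda]>0$, where $\Lambda=\set{\eta<\infty}$. By Lemma~\ref{lem: key-loc-mart}(2), the process $S:=\Exp(-D)^{-1}\indic_{\dbraco{0,\eta}}$ is a nonnegative $\bF$-local martingale with $\dbraco{\eta,\infty}\subseteq\set{S=0}$, so Proposition~\ref{prop: loc_mart at random time}(2) yields that $N:=S^\tau/L^\tau$ is also a $\bG$-local martingale. Using $\tau<\eta$ $\prob$-a.s., we have $S^\tau=\Exp(-D)^{-1}_{\tau\wedge\cdot}$, which is strictly positive, nondecreasing, and starts at $1$.

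Next, through a localized change of measure, I will show $S^\tau\equiv 1$. Pick bounded $\bG$-stopping times $(\sigma_k)_{k\in\N}$ with $\sigma_k\uparrow\infty$ such that $M^{\sigma_k}$ is a uniformly integrable $\bG$-martingale, and define $\tprob_k\sim\prob$ on $\G_{\sigma_k}$ via $d\tprob_k/d\prob=M_{\sigma_k}$. Since $N$ and $M$ are $\prob$-local martingales and $M>0$, Bayes' rule gives that $(N/M)^{\sigma_k}=S^{\tau\wedge\sigma_k}$ is a $\tprob_k$-local martingale. But a nondecreasing local martingale $Y$ is a.s.\ constant---for $Y-Y_0\geq 0$ is then a nonnegative local martingale with zero initial expectation, hence a nonnegative supermartingale of zero mean, hence identically zero---so $S^{\tau\wedge\sigma_k}$ is $\tprob_k$-a.s.\ constant, and equivalence transfers this to $\prob$. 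Letting $k\to\infty$ gives $S^\tau_t=1$ for all $t\geq 0$, $\prob$-a.s.; equivalently, $D_\tau=0$ $\prob$-a.s.

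The contradiction emerges by computing $\expec[D_\tau]$. Using the martingale property of $\mu=Z+A$, the $\bF$-predictable projection of $\indic_{\set{\cdot\leq\tau}}$ coincides with $Z_-$, and therefore
\[
\expec[D_\tau]=\expec\!\left[\int_{[0,\infty)}Z_{s-}\,dD_s\right].
\]
On $\Lambda$ one has $Z_{s-}>0$ for all $s\leq\eta$ (by the definition of $\Lambda$), and since $D$ is constant on $\dbraoc{\eta,\infty}$ and $\expec[D_\infty]=\prob[\Lambda]>0$ forces $\prob[\Lambda\cap\set{D_\eta>0}]>0$, the integrand is strictly positive on a set of positive measure, yielding $\expec[D_\tau]>0$ and contradicting $D_\tau=0$ a.s. The main technical hurdle is the change-of-measure step---justifying the application of Bayes' rule on $\G_{\sigma_k}$, transferring the a.s.\ constancy of $S^{\tau\wedge\sigma_k}$ through the equivalent measure, and passing to the limit $k\to\infty$.
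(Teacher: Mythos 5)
Your implications $(1)\Rightarrow(2)$ and $(3)\Rightarrow(1)$ coincide with the paper's (take $X\equiv 1$, resp.\ invoke Proposition~\ref{prop: loc_mart at random time}(2)). For the crucial implication $(2)\Rightarrow(3)$ you take a genuinely different route: the paper localises $1/L^\tau$ along $\tau_n=\inf\{t: 1/L^\tau_t>n\}$, rewrites these times through $\nu_n=\inf\{t: L_t<1/n\}$, evaluates $\expec\bra{1/L_{\tau\wedge\nu_n}}$ directly via the representation formula \eqref{eq: main} (as in \eqref{comp-mart-progr}), and concludes $\set{K_\infty<1}\cap\set{L_{\nu_n}=0}=\emptyset$, hence $\prob\bra{\eta<\infty}=0$ via Lemma~\ref{lem: event_eq}. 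You instead argue by contradiction with the explicit local martingale $S=\Exp(-D)^{-1}\indic_{\dbraco{0,\eta}}$ of Lemma~\ref{lem: key-loc-mart}: Proposition~\ref{prop: loc_mart at random time}(2) makes $S^\tau/L^\tau$ a $\bG$-local martingale, and if $1/L^\tau$ were one too, a localised equivalent change of measure with density $M_{\sigma_k}$ plus the fact that a nondecreasing local martingale is constant forces $D_\tau=0$ a.s. This is correct (the Bayes step, the transfer of a.s.\ constancy through $\tprob_k\sim\prob$, and the limit $k\to\infty$ all go through), and it is conceptually appealing: it shows that (2) would turn $1/L^\tau$ into a strictly positive deflator for the arbitrage asset of Theorem~\ref{thm:gen_progr}(2), which cannot be. The paper's computation is shorter and stays entirely within the $(K,L)$-calculus, avoiding Girsanov-type arguments.

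There is, however, one inference at the end that does not hold as stated. From ``$D$ is constant on $\dbraco{\eta,\infty}$'' and ``$\expec\bra{D_\infty}=\prob\bra{\eta<\infty}>0$'' you cannot conclude $\prob\bra{\eta<\infty,\,D_\eta>0}>0$: a priori the mass of $\expec\bra{D_\infty}$ could be carried entirely by $\set{\eta=\infty}$, where $D$ may well be strictly positive (think of the compensator of a stopping time that occurs only with positive probability). The conclusion you need is true, but it requires one more use of the compensator property rather than this accounting of expectations; the cleanest repair is to drop the support argument altogether and finish exactly as in the paper's proof of Theorem~\ref{thm:gen_progr}(2): since $Z_-$ is predictable and $D$ is the compensator of $\indic_{\dbraco{\eta,\infty}}$, one has $\expec\bra{D_\tau}=\expec\bra{\int_{[0,\infty)} Z_{t-}\,\ud D_t}=\expec\bra{\int_{[0,\infty)} Z_{t-}\,\ud \indic_{\set{\eta\le t}}}=\expec\bra{Z_{\eta-}\indic_{\set{\eta<\infty}}}>0$ whenever $\prob\bra{\eta<\infty}>0$, because $Z_{\eta-}>0$ on $\set{\eta<\infty}$. (Alternatively, $\prob\bra{\eta<\infty,\,D_\eta=0}=0$ follows by applying the same identity to the predictable integrand $\indic_{\set{D=0}}$.) With this one-line fix your argument is complete.
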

\begin{proof}
Implication (1)~$\Rightarrow$~(2) is trivial, while (3)~$\Rightarrow$~(1) follows from part (2) of Proposition \ref{prop: loc_mart at random time}.
In order to prove (2)~$\Rightarrow$~(3), note that the sequence $\{\tau_n\}_{\nin}$ defined by $\tau_n \dfn \inf\set{t\in\Real_+ \such 1/L^{\tau}_t>n}$, for all $\nin$, is a localising sequence for $1/L^{\tau}$ on $\basisgp$. Define the sequence $\{\nu_n\}_{n\in\N}$ of stopping times on $\basis$ via $\nu_n \dfn \inf \set{t \in \Real_+ \such L_t < 1/n}$, for all $\nin$, and observe that $\tau_n=\nu_n\indic_{\set{\nu_n\leq\tau}}+\infty\indic_{\set{\nu_n>\tau}}$. Then, by computations analogous to \eqref{comp-mart-progr}, we obtain
\[
1 = \expec\left[\frac{1}{L_{\tau\wedge\tau_n}}\right]
= \expec\left[\frac{1}{L_{\tau\wedge\nu_n}}\right]
= \expec\left[K_{\nu_n}+\indic_{\set{L_{\nu_n}>0}}(1-K_{\nu_n})\right]
= 1 - \expec\left[\indic_{\set{L_{\nu_n}=0}}(1-K_{\infty})\right],
\]
where in the last equality we have used the fact that $K$ does not increase on $\set{L=0}$. In turn, this implies that $\set{K_{\infty}<1}\cap\set{L_{\nu_n}=0}=\emptyset$ holds (modulo evanescence). Due to Lemma \ref{lem: event_eq} and since $\set{\Delta K>0}\subseteq\set{L>0}$ holds modulo evanescence (see \cite{Kar_14}), this implies that $\prob \bra{\eta < \infty} = 0$.
\end{proof}

Part (1) of Proposition~\ref{prop: loc_mart at random time} leads to a quick and easy proof of the classical result of \cite{MR519998} on the semimartingale property of $X^{\tau}$ on $\basisgp$ for any semimartingale $X$ on $\basisp$.

\begin{cor}	\label{cor:H_prog}
For any semimartingale $X$ on $\basisp$, the process $X^\tau$ is a semimartingale on $\basisgp$.
\end{cor}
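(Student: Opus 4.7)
The plan is to bootstrap from part (1) of Proposition~\ref{prop: loc_mart at random time}, first applied to the constant process and then to general nonnegative local martingales, and finally to reduce an arbitrary semimartingale to a finite-variation part plus a local-martingale part via a Bichteler--Dellacherie style decomposition.

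First, I would apply Proposition~\ref{prop: loc_mart at random time}(1) to the constant process $X \equiv 1$, which is trivially a nonnegative supermartingale on $\basisp$. This yields that $1/L^{\tau}$ is a supermartingale on $\basisgp$, and in particular a semimartingale on $\basisgp$. Remark~\ref{rem: L_tau_pos} ensures $\dbra{0,\tau} \subseteq \set{L > 0}$, so $L^{\tau}$ is strictly positive; applying It\^o's formula to the map $x \mapsto 1/x$ to the semimartingale $1/L^{\tau}$ then shows that $L^{\tau}$ itself is a semimartingale on $\basisgp$.

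Next, for any nonnegative local martingale $N$ on $\basisp$, another application of Proposition~\ref{prop: loc_mart at random time}(1) shows that $N^{\tau}/L^{\tau}$ is a supermartingale on $\basisgp$, hence a semimartingale. Integration by parts applied to the product $N^{\tau} = L^{\tau} \cdot (N^{\tau}/L^{\tau})$ then shows that $N^{\tau}$ is a semimartingale on $\basisgp$. By a standard localisation-plus-translation argument -- localising a local martingale $M$ on $\basisp$ to a bounded martingale $M^{\rho_n}$ with $|M^{\rho_n}|\le c_n$, and writing $M^{\rho_n} = (M^{\rho_n} + c_n) - c_n$ as the difference of a nonnegative bounded martingale and a constant, noting that each $\bF$-stopping time $\rho_n$ is also a $\bG$-stopping time -- one obtains that $M^{\tau}$ is a semimartingale on $\basisgp$ for \emph{any} local martingale $M$ on $\basisp$.

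Finally, an arbitrary semimartingale $X$ on $\basisp$ admits a (non-unique) decomposition $X = X_0 + M + A$ with $M$ a local martingale on $\basisp$ and $A$ a finite-variation process on $\basisp$. Since $A^{\tau}$ is still of finite variation, it is a semimartingale in every filtration to which it is adapted, and in particular on $\basisgp$; together with the previous step this yields the semimartingale property of $X^{\tau} = X_0 + M^{\tau} + A^{\tau}$ on $\basisgp$. The main point where care is required is the first step: the transfer from the $\basisp$-supermartingale property of the constant $1$ to the $\basisgp$-semimartingale property of $L^{\tau}$ itself (via inversion), which is precisely what unlocks the integration-by-parts factorisation $N^{\tau} = L^{\tau} \cdot (N^{\tau}/L^{\tau})$ used in step two.
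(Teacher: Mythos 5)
Your steps (1), (2) and (4) are fine and in fact mirror the paper's own argument: the paper also applies part (1) of Proposition~\ref{prop: loc_mart at random time} to a shifted nonnegative supermartingale and then (implicitly) uses that $L^\tau$ is a strictly positive $\bG$-semimartingale to undo the division by $L^{\tau}$, exactly the inversion/product manoeuvre you spell out. The genuine gap is in step (3): a general local martingale on $\basisp$ \emph{cannot} be localised to bounded martingales. Local boundedness of a local martingale requires its jumps to be locally bounded, and this fails, for instance, for a compensated compound Poisson process whose jump sizes are integrable but unbounded: any stopping time $\rho$ that charges the first jump time makes $M^{\rho}$ unbounded, so no sequence $\rho_n \uparrow \infty$ with $\abs{M^{\rho_n}} \le c_n$ exists. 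Consequently the translation $M^{\rho_n}+c_n$ need not be nonnegative, and Proposition~\ref{prop: loc_mart at random time}(1) cannot be invoked as you do; as written, step (3) is false.

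The missing ingredient is exactly the reduction used in the paper: by \cite[Proposition I.4.17]{MR1943877} (the fundamental theorem of local martingales), write the local-martingale part as $N=N'+N''$, where $\abs{\Delta N'}\le a$ and $N''$ has finite variation. The bounded-jump part $N'$ \emph{is} locally bounded: with $\sigma_n:=\inf\set{t\in\Real_+ \such \abs{N'_t}\ge n}$ one has $(N')^{\sigma_n}\ge -(a+n)$, so your shift-and-divide argument applies to $a+n+(N')^{\sigma_n}$, while $(N'')^{\tau}$ is of finite variation and $\bG$-adapted, hence trivially a semimartingale on $\basisgp$. An alternative repair staying closer to your wording is to localise $M$ so that each $M^{\rho_n}$ is a uniformly integrable martingale and to write $M^{\rho_n}_t=\expec\bra{M_{\rho_n}^{+}\such \F_t}-\expec\bra{M_{\rho_n}^{-}\such \F_t}$, a difference of two nonnegative martingales to which your step (2) applies. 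With either repair the rest of your proof goes through and coincides in substance with the paper's.
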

\begin{proof}
Let $X$ be a semimartingale on $\basisp$, so that $X=X_0+B+N$, for some adapted process of finite variation $B$ and a local martingale $N$ on $\basisp$. By \cite[Proposition I.4.17]{MR1943877}, it holds that $N=N'+N''$, where $N'$ and $N''$ are two local martingales on $\basisp$ such that $|\Delta N'|\leq a$ $\prob$-a.s. for some $a>0$ and $N''$ is of finite variation. In order to prove the claim it suffices to show that $(N')^{\tau}$ is a semimartingale on $\basisgp$. To this effect, let $\sigma_n:=\inf\{t\geq0:|N'_t|\geq n\}$, for $n\in\N$, so that $N'_{t\wedge\sigma_n}\geq -(a+n)$ $\prob$-a.s. for all $t\geq0$. Hence, by part (1) of Proposition \ref{prop: loc_mart at random time}, the process $(a+n+N')^{\sigma_n\wedge\tau}/L^{\sigma_n\wedge\tau}$ is a supermartingale on $\basisgp$. In turn, this implies the semimartingale property of $(N')^{\sigma_n\wedge\tau}$ on $\basisgp$. Since semimartingales are stable by localization (see e.g. \cite[Proposition I.4.25]{MR1943877}), this shows the semimartingale property of $(N')^{\tau}$ on $\basisgp$.
\end{proof}

\subsection{Condition \naone\ in the progressively enlarged filtration} \label{subsec:proof_main_prog}

As a consequence of Proposition~\ref{prop: loc_mart at random time}, a \emph{sufficient} condition for \naoneg \ to hold is immediate. The proof of the following result is straightforward, hence omitted. The notation $\Ygp$ is self-explanatory.

\begin{prop} \label{prop: suff_na1}
Suppose that there exists a local martingale deflator $M$ for $S$ on $\basisp$ such that $\set{M > 0} = \dbraco{0 ,\eta}$. Then, 
$M^\tau / L^\tau \in \Ygp$.
\end{prop}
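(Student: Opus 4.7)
The plan is a direct two-fold application of Proposition~\ref{prop: loc_mart at random time}(2): to verify $M^\tau / L^\tau \in \Ygp$, I need to check (i) strict positivity with unit initial value, (ii) the local-martingale property of $M^\tau/L^\tau$ on $\basisgp$, and (iii) the local-martingale property of $(M^\tau/L^\tau) S^\tau$ on $\basisgp$. Conditions (ii) and (iii) will follow by applying Proposition~\ref{prop: loc_mart at random time}(2) to the processes $M$ and $MS$, respectively.

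First I would dispose of strict positivity and the normalisation. The hypothesis $\set{M > 0} = \dbraco{0, \eta}$, combined with the already recorded fact $\prob \bra{\eta > \tau} = 1$ established immediately after the definition \eqref{eta}, forces $M^\tau > 0$; Remark~\ref{rem: L_tau_pos} provides $L^\tau > 0$. Together with $M_0 = 1 = L_0$, this gives that $M^\tau / L^\tau$ is strictly positive with $(M^\tau/L^\tau)_0 = 1$, and it is $\bG$-adapted and \cadlag\ since $\tau$ is a $\bG$-stopping time.

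For the two local-martingale conditions, I would first apply Proposition~\ref{prop: loc_mart at random time}(2) with $X = M$: the process $M$ is a nonnegative local martingale on $\basisp$ by the defining property of a local martingale deflator, and the hypothesis $\set{M > 0} = \dbraco{0, \eta}$ is equivalent (modulo evanescence) to the required inclusion $\dbraco{\eta, \infty} \subseteq \set{M = 0}$. Hence $M^\tau/L^\tau$ is a local martingale on $\basisgp$. I would then apply the same proposition with $X = MS$: the product $MS$ is a nonnegative local martingale on $\basisp$ because $S \geq 0$ and $M$ is a deflator for $S$, while $M = 0$ on $\dbraco{\eta, \infty}$ forces $MS = 0$ there, so again the hypothesis is met; the conclusion yields that $(MS)^\tau / L^\tau = (M^\tau/L^\tau) S^\tau$ is a local martingale on $\basisgp$.

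Since all of the analytic work has been carried out in Proposition~\ref{prop: loc_mart at random time}, I do not expect any genuine obstacle; the only points worth noting are the translation of $\set{M > 0} = \dbraco{0, \eta}$ into the inclusion $\dbraco{\eta, \infty} \subseteq \set{M = 0}$ used in the hypothesis of that proposition, and the trivial identification $(MS)^\tau = M^\tau S^\tau$ that allows the product deflator property to be read off.
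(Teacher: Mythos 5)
Your proof is correct and is exactly the argument the paper has in mind: the paper omits the proof precisely because it is the immediate double application of Proposition~\ref{prop: loc_mart at random time}(2) to $M$ and $MS$, plus the strict-positivity check via $\prob\bra{\eta>\tau}=1$ and Remark~\ref{rem: L_tau_pos}. Nothing is missing.
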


In particular, observe that Proposition \ref{prop: suff_na1} provides an explicit procedure for transforming a local martingale deflator for $S$ on $\basisp$ into a local martingale deflator for $S^{\tau}$ on $\basisgp$.
We are now ready to present the proofs of our results on \naone\ stability under progressive filtration enlargement.

\begin{proof}[Proof of Theorem~\ref{thm: NA1_G}]
In view of Lemma~\ref{lem: eta_P_Q} and Theorem~\ref{thm: num_loc_mart}, we may assume without loss of generality (replacing $\prob$ with $\qprob$ if necessary) the existence of a strictly positive $\Xhat \in \X(\bF, S)$ such that $Y \dfn (1 / \Xhat) \in \Yfp$. Since $\prob \bra{\eta < \infty, \Delta S_{\eta} \neq 0} = 0$ holds, we obtain $\prob \bra{\eta < \infty, \Delta Y_{\eta} \neq 0} = 0$; in particular, $\prob \bra{\eta < \infty, \Delta (Y S)_{\eta} \neq 0} = 0$ holds. In the notation of Lemma~\ref{lem: key-loc-mart}, define $M \dfn Y \Exp(-D)^{-1} \indic_{\dbraco{0, \eta}}$. Note that $M_0 = 1$ and $\set{M > 0} = \dbraco{0 ,\eta}$. By Lemma~\ref{lem: key-loc-mart}, it follows that $M S^i - \bra{ \Exp(-D)^{-1} \indic_{\dbraco{0, \eta}}, Y S^i }$ is a local martingale on $\basisp$ for all $\iii$. Furthermore,
\[
\bra{ \Exp(-D)^{-1} \indic_{\dbraco{0, \eta}}, Y S^i } = \bra{ \Exp(-D)^{-1}, Y S^i } - \bra{ \Exp(-D)^{-1} \indic_{\dbraco{\eta, \infty}}, Y S^i } = \bra{ \Exp(-D)^{-1}, Y S^i },
\]
where $\bra{ \Exp(-D)^{-1} \indic_{\dbraco{\eta, \infty}}, Y S^i } = 0$ follows from the fact that $\Exp(-D)^{-1} \indic_{\dbraco{\eta, \infty}} = \Exp(-D)^{-1}_{\eta} \indic_{\dbraco{\eta, \infty}}$
is a single-jump process, jumping at $\eta$. Since $\Exp(-D)^{-1}$ is predictable, it follows that
\[
\bra{ \Exp(-D)^{-1} \indic_{\dbraco{0, \eta}}, Y S^i } = \bra{ \Exp(-D)^{-1}, Y S^i } = \int_0^\cdot \Delta \Exp(-D)^{-1}_t \ud \pare{Y S^i}_t
\]
is a local martingale on $\basisp$ for all $\iii$. Therefore, $M S^i$ is a local martingale on $\basisp$ for all $\iii$, and Theorem~\ref{thm: NA1_G} follows from Proposition~\ref{prop: suff_na1}.
 \end{proof}

\begin{proof}[Proof of Theorem \ref{thm:gen_progr}]
Statement (1) follows directly from Theorem~\ref{thm: NA1_G}.

For statement (2), let $D$ be as in Lemma~\ref{lem: key-loc-mart}, and define $S = \Exp(-D)^{-1} \indic_{\dbraco{0, \eta}}$.
Then $S_0=1$ and $S$ is a nonincreasing process up to $\tau$, thus $S_\tau\geq1$. Moreover, by Lemma~\ref{lem: key-loc-mart}, $S$ is a local martingale on $\basisp$, hence \naone$(\bF,S)$ holds.
From \eqref{eq: main} and $Z=L(1-K)$, and using integration by parts and the definition of $D$, we have
\begin{align*}
\expec[D_\tau]&=\expec\left[\int_0^\infty D_t L_t \ud K_t\right]=-\expec\left[\int_0^\infty D_t \ud Z_t\right]=\expec\left[\int_0^\infty Z_{t-} \ud D_t\right]=\expec\left[\int_0^\infty Z_{t-} \ud \indic_{\{\eta\leq t\}}\right]\\
&=\expec[Z_{\eta-}\indic_{\{\eta<\infty\}}].
\end{align*}
Therefore, if $\prob\bra{\eta<\infty}>0$, then $\prob\bra{D_\tau>0}>0$, hence $\prob\bra{S_\tau>1}>0$.
This means that \naone$(\bG,S^\tau)$ fails, concluding the proof.
\end{proof}

Note that, in view of Proposition \ref{mart-progr}, Theorem \ref{thm:gen_progr} implies that \naone\ is stable for all semimartingale models if and only if the process $1/L^{\tau}$ is a local martingale on $\basisgp$.

\begin{rem}	\label{rem:NA1_progr_takaoka}
Proposition~\ref{mart-progr} allows to give a direct proof of statement (1) of Theorem~\ref{thm:gen_progr}. Indeed, in view of~\cite[Theorem 2.6]{Tak_13}, \naone$(\bF,S)$ is equivalent to the existence of a process $Y\in\Yfp$. Due to Proposition~\ref{mart-progr}, if $\prob\bra{\eta<\infty}=0$, then $Y^{\tau}/L^{\tau}$ and $(Y^{\tau}/L^{\tau})S^{\tau}$ are local martingales on $\basisgp$, so that $Y^{\tau}/L^{\tau}\in\Ygp$. Hence, by \cite[Theorem 2.6]{Tak_13}, \naone$(\bG,S^{\tau})$ holds. 

Note, however, that this line of reasoning cannot be applied to prove Theorem~\ref{thm: NA1_G}. In fact, in order to construct a strictly positive local martingale deflator in the enlarged filtration $\bG$ starting from an element of $\Yfp$ and relying on Proposition~\ref{prop: loc_mart at random time}, one needs to show that \naone$(\bF,S)$ and $\prob\bra{\eta<\infty,\Delta S_{\eta}\neq0}=0$ together imply the existence of a strictly positive local martingale deflator which does not jump at $\eta$. For this property to hold, we need a more precise statement of the main result of~\cite{Tak_13} in the form of Theorem~\ref{thm: num_loc_mart}.
\end{rem}

\subsection{A partial converse to Proposition~\ref{prop: suff_na1}}

While Proposition~\ref{prop: suff_na1} is sufficient for establishing the \naone\ stability under progressive enlargement in Theorem~\ref{thm: NA1_G}, here we address the inverse problem. Precisely, we seek conditions ensuring the existence of a deflator for $S$ in $\bF$ once a deflator for $S^\tau$ exists in the enlarged filtration $\bG$. Additionally, we want the deflator in $\bF$ to vanish on $\dbraco{\eta, \infty}$, in order to end up in the setting of Proposition~\ref{prop: suff_na1}. The next result shows that this is indeed the case when $\tau$ avoids all stopping times on $\basisp$, meaning that $\prob \bra{\tau = \sigma < \infty} = 0$ holds for all stopping times $\sigma$ on $\basis$.

\begin{thm} \label{thm: NA1_F}
Suppose that $\tau$ avoids all stopping times on $\basisp$. If $\Ygp \neq \emptyset$, then there exists a local martingale deflator $Y$ for $S$ on $\basisp$, 
with $Y=0$ on $\dbraco{\eta, \infty}$.
\end{thm}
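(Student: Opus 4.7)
The plan is to construct, from a given deflator $\widetilde{Y}\in\Ygp$, an $\bF$-optional process $Y$ that coincides with $\widetilde{Y}L^{\tau}$ on $\dbraco{0,\tau}$ and vanishes on $\dbraco{\eta,\infty}$, and then to verify that $Y$ enjoys the required local martingale properties. First, under the avoidance hypothesis $\prob\bra{\tau=\sigma<\infty}=0$ for all $\bF$-stopping times $\sigma$, the dual optional projection $A$ of $\indic_{\dbraco{\tau,\infty}}$ has continuous paths (so that, by the remark following Theorem~\ref{thm: doleans sharpened}, the process $K$ is continuous), and classical results of enlargement-of-filtrations theory (see e.g.\ \cite[Ch.~VI]{MR1037262}) ensure that every $\bG$-optional process admits an $\bF$-optional version on $\dbraco{0,\tau}$. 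Applying this to the nonnegative $\bG$-optional process $\widetilde{Y}L^{\tau}$ yields an $\bF$-optional process $\bar{Y}$ with $\bar{Y}\indic_{\dbraco{0,\tau}}=\widetilde{Y}L^{\tau}\indic_{\dbraco{0,\tau}}$ up to indistinguishability. Setting $Y:=\bar{Y}\indic_{\dbraco{0,\eta}}$, one checks immediately that $Y$ is $\bF$-optional and nonnegative, with $Y_{0}=\widetilde{Y}_{0}L_{0}=1$ and $Y=0$ on $\dbraco{\eta,\infty}$.

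The next step is to establish that both $Y$ and each $YS^{i}$, $\iii$, are local martingales on $\basisp$. The key tool I would develop is a converse to Proposition~\ref{prop: loc_mart at random time}(2) under the avoidance hypothesis, which I expect to read: if $X$ is a nonnegative $\bF$-optional process vanishing on $\dbraco{\eta,\infty}$ and $X^{\tau}/L^{\tau}$ is a local martingale on $\basisgp$, then $X$ is itself a local martingale on $\basisp$. The main computation would be to test the $\bF$-martingale property of $X$ at a bounded $\bF$-stopping time $\rho$ by using the disintegration formula \eqref{eq: main} applied to a suitable optional process, rewriting $\expec\bra{X_{\rho}}$ in terms of integrals of $X_{t}L_{t}\ud K_{t}$ analogous to those appearing in \eqref{eq:loc_mart_new_proof_1}; one then uses the $\bG$-martingale property of $X^{\tau}/L^{\tau}$ to evaluate those integrals and recover $X_{0}$. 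Continuity of $K$ is essential here, since it rules out atomic mismatches between the jumps of $X$ and of $L$ at times where $K$ charges mass.

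With this converse at hand, apply it first with $X=Y$ to conclude that $Y$ is an $\bF$-local martingale, and then with $X=YS^{i}$ for each $\iii$, using that $(YS^{i})^{\tau}/L^{\tau}=\widetilde{Y}(S^{i})^{\tau}$ is a $\bG$-local martingale by virtue of $\widetilde{Y}\in\Ygp$. A standard localisation via the sequence $(\zeta_{n})$ from \S\ref{subsec:main-progr}, together with stopping times bounding $\widetilde{Y}$ and $L$, reduces everything to bounded computations. The main obstacle is the converse statement itself, which is the only place where the avoidance hypothesis is genuinely used; establishing it requires careful bookkeeping on the event $\{\rho\geq\tau\}$, where the $\bG$-martingale property of $\widetilde{Y}$ no longer directly controls $Y$, and one must exploit the continuity of $K$ (equivalently, the absence of atoms of $\tau$ at $\bF$-stopping times) to bridge $\bF$-conditional expectations with the $\bG$-martingale relation on $\dbraco{0,\tau}$.
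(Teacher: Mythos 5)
There is a genuine gap, and it sits exactly where your plan leans on the ``converse to Proposition~\ref{prop: loc_mart at random time}(2)''. That converse is false as stated: the hypothesis that $X^{\tau}/L^{\tau}$ is a local martingale on $\basisgp$ only constrains $X$ on $\dbra{0,\tau}$, while an $\bF$-optional process vanishing on $\dbraco{\eta,\infty}$ is completely unconstrained on the (generally nonevanescent) region between $\tau$ and $\eta$ --- for instance on $\dbraoo{\zeta,\infty}$ on the event $\Omega\setminus\Lambda$, where $\eta=\infty$. Concretely, take $\tau$ uniform on $[0,1]$ independent of a trivial $\bF$ (so avoidance holds, $Z_t=1-t$, $\Lambda=\emptyset$, $\eta=\infty$) and $X=\indic_{[2,\infty)}$: then $X^{\tau}/L^{\tau}\equiv 0$ is a $\bG$-local martingale, yet $X$ is not an $\bF$-local martingale. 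Continuity of $K$ cannot repair this, since the problem is not an atomic mismatch at $\tau$ but the total invisibility, to the $\bG$-condition, of everything after $\tau$. For the same reason your construction of $Y$ is not well posed: the $\bF$-optional process $\bar Y$ is pinned down only on $\dbraco{0,\tau}$, which is not an $\bF$-optional set, so $\bar Y\indic_{\dbraco{0,\eta}}$ depends on an arbitrary choice of version precisely on the stretch where the martingale property has to be proved, and no argument is offered (nor can one be, in this generality) that \emph{some} version works. Indeed even the target identity ``$Y=\widetilde Y L^{\tau}$ on $\dbraco{0,\tau}$ for an $\bF$-local martingale $Y$'' is dubious: an arbitrary $\widetilde Y\in\Ygp$ need not arise from an $\bF$-deflator via Proposition~\ref{prop: loc_mart at random time}, so inverting that proposition on $\widetilde Y$ is the wrong starting point.

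The missing ideas, which the paper's proof supplies, are (i) first to \emph{kill the $\bG$-deflator at $\tau$}: with $C$ the $\bG$-predictable compensator of $\indic_{\dbraco{\tau,\infty}}$ (continuous because avoidance makes $\tau$ totally inaccessible in $\bG$ --- note this is where avoidance enters, not through $K$ or $A$), the process $U:=M\,\Exp(-C)^{-1}\indic_{\dbraco{0,\tau}}$ is a local martingale deflator for $S^{\tau}$ on $\basisgp$ supported on $\dbraco{0,\tau}$ (avoidance is also used to get $\Delta S_{\tau}=0$ and $\Delta(MS)_{\tau}=0$, so the argument of Theorem~\ref{thm: NA1_G} applies); and (ii) then to take the \emph{$\bF$-optional projection} of $U$, which is a canonical choice of the values after $\tau$ and automatically inherits the local martingale property (with localising sequences $\sigma_n$ on $\basis$ chosen so that $\sigma_n\wedge\tau=\sigma'_n\wedge\tau$), while $\prob\bra{\tau<\eta}=1$ forces the projection to vanish on $\dbraco{\eta,\infty}$. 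Without the killing step and the projection, the extension problem your plan runs into has no canonical solution, and the claimed converse lemma on which everything rests does not hold.
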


\begin{proof}
Let $C$ be the predictable compensator of $\indic_{\dbraco{\tau, \infty}}$ on $\basisg$, and note that for every predictable time $\sigma$ in $\basisg$ it holds that $\Delta C_\sigma = \prob \bra{\tau = \sigma \such \G_{\sigma -}}$ on $\set{\sigma < \infty}$. Now, by assumption $\tau$ avoids all stopping times on $\basisp$, hence in particular all the predictable ones, which is equivalent to say that $\tau$ is a totally inaccessible stopping time on $\basisgp$; see \cite[p.65]{MR604176}. From this fact it follows that $\Delta C_\sigma =0$ holds on $\set{\sigma < \infty}$ for every predictable time $\sigma$ in $\basisg$. The predictable section theorem then implies that $C$ is continuous, thus, in particular, the process $\Exp (- C)^{-1} \indic_{\dbraco{0, \tau}}$ is well-defined. Now, by the same arguments used in the proof of Lemma~\ref{lem: key-loc-mart}, it holds that $\Exp (- C)^{-1} \indic_{\dbraco{0, \tau}}$ is a local martingale on $\basisgp$. Take $M \in \Ygp$. Since $\tau$ avoids all stopping times on $\basisp$, then $\Delta S_\tau=0$ and, as in the proof of Theorem~\ref{thm: NA1_G}, we can assume without loss of generality that $\Delta (M S)_\tau=0$ as well. These two facts allow us to repeat the same steps as in the proof of Theorem~\ref{thm: NA1_G} to show that $U:=M\Exp (- C)^{-1} \indic_{\dbraco{0, \tau}}$ is a local martingale deflator for $S^\tau$ on $\basisgp$.

Now, define $Y$ as the optional projection of $U$ on $\basisp$. Note that $Y_0 = 1$ and that $Y=0$ on $\dbraco{\eta, \infty}$, since $\prob \bra{\tau<\eta} = 1$ (see the discussion after \eqref{eta}). Let $(\sigma'_n)_{n\in\N}$ be a localising sequence for $U$ on $\basisg$, and let $(\sigma_n)_{n\in\N}$ be a sequence of stopping times on $\basis$ such that $\sigma'_n \wedge \tau = \sigma_n \wedge \tau$ for $n\in\N$. Then it is easily verified that $Y$ is a local martingale on $\basisp$, with  $(\sigma_n)_{n\in\N}$ as a localising sequence. Moreover, for any stopping time $\sigma$ in $\basis$ we have
\[
\expec[S^i_{\sigma\wedge\sigma_n} Y_{\sigma\wedge\sigma_n}]=\expec[S^i_{\sigma\wedge\sigma_n} U_{\sigma\wedge\sigma_n}]=\expec[(S^i)^\tau_{\sigma\wedge\sigma_n} U_{\sigma\wedge\sigma_n}]=\expec[(S^i)^\tau_{\sigma\wedge\sigma'_n} U_{\sigma\wedge\sigma'_n}]=S^i_0, \quad \forall \iii.
\]
This shows that $YS^i$ is a local martingale on $\basisp$ for all $\iii$ and concludes the proof.
\end{proof}

\section{Arbitrage of the First Kind in Initially Enlarged Filtrations}\label{sect.init}
In this section, the proof of Theorem \ref{NA1-init-thm} and Theorem \ref{thm:gen_init} will be given, and interesting side results will also be discussed. The validity of Jacod's criterion (Assumption \ref{ass:Jac_sep}) is tacitly assumed throughout.
We start by proving the existence of a good version of conditional densities for $J$.
\begin{proof}[Proof of Lemma~\ref{lem-Jac}]
Denote by $\cO(\oFF)$ the optional $\sigma$-field associated to the filtration $\oFF =(\oF_t)_{t\in \Real_+}$ on $E \times \Omega$ defined by $\oF_t:=\bigcap_{s>t}\left(\cE\otimes\F_s\right)$, $t\in \Real_+$. Note that $\cE\otimes\cO(\FF) \subseteq \cO(\oFF)$ (see \cite{Jac85}). By  \cite[Lemma 1.8]{Jac85}, Assumption \ref{ass:Jac_sep} implies the existence of an $\cO(\oFF)$-measurable nonnegative function $\tilde{p}:(x,\omega,t)\mapsto\tilde{p}^x_t(\omega)$ such that $(i)$-$(ii)$ hold. Since, for every $x\in E$, the process $\tilde{p}^x$ is $\FF$-optional, being $\FF$-adapted and \cadlag, Remark 1 after Proposition 3 of \cite{SY} gives the existence of a $\cE\otimes\cO(\FF)$-measurable version $p$ of $\tilde{p}$.
\end{proof}

The following consequence of Lemma~\ref{lem-Jac} will be used in several places: for any $t\in \Real_+$ and $\pare{\cE\otimes\F_t}$-measurable function $E \times \Omega \times \Real_+ \ni (x, \omega,t)\mapsto f^x_t(\omega) \in \Real_+$, it holds that
\be	\label{exp-init}
\EE\left[f_t^J\right] 
= \EE\left[\int_Ef_t^x\,p^x_t\,\gamma[\ud x]\right]
= \int_E\EE\left[f_t^x\,p^x_t\right]\gamma[\ud x].
\ee

\subsection{Results regarding the stopping times $\pare{\eta^x}_{x \in E}$}
The next result can be regarded as a counterpart to Lemma~\ref{lem: key-loc-mart} in the case of initially enlarged filtrations. 
Note that $\cP(\FF)$ denotes the $\FF$-predictable $\sigma$-field on $\Omega\times \Real_+$ in all that follows.

\begin{lem}	\label{compensator}
Fix $x\in E$, and let $D^x$ be the predictable compensator of $\ind_{\lsi\eta^x,\infty\lsi}$ on $\basisp$, with $\eta^x$ defined in \eqref{eta-init}. Then:
\begin{enumerate}
	\item $\Delta D^x < 1$, $\PP$-a.s.; in particular, $\Exp (- D^x)$ is nonincreasing and strictly positive;
	\item the nonnegative process $\Exp (- D^x)^{-1} \indic_{\dbraco{0, \eta^x}}$ is a local martingale on $\basisp$.
\end{enumerate}
Suppose moreover that the space $\Lb^1 (\Omega, \F, \prob)$ is separable.  Then, the function $E \times \Omega \times \Real_+ \ni (x,\omega,t)\mapsto\Exp(-D^x)_t(\omega)$ can be chosen $\cE\otimes\cP(\FF)$-measurable.
\end{lem}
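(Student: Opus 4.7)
The plan is to reduce parts (1) and (2) to the progressive-enlargement analogue Lemma~\ref{lem: key-loc-mart}, since $\eta^x$ plays in the present setting exactly the role that $\eta$ played there, with the martingale $p^x$ replacing $L$. The remaining measurability claim is the only genuinely new ingredient; it will be obtained by invoking \cite[Proposition~4]{SY}, whose separability hypothesis is precisely the one assumed.

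For part (1), I would argue by contradiction as in Lemma~\ref{lem: key-loc-mart}: since $\Delta D^x_\sigma=\prob[\eta^x=\sigma\,|\,\F_{\sigma-}]$ on $\{\sigma<\infty\}$ for every predictable stopping time $\sigma$ on $\basis$, it suffices to show that $\prob[\eta^x=\sigma<\infty\,|\,\F_{\sigma-}]<1$ on $\{\sigma<\infty\}$. If not, restricting $\sigma$ to the $\F_{\sigma-}$-event where this conditional probability equals $1$ yields a predictable $\sigma$ with $\prob[\sigma<\infty]>0$ and $\{\eta^x=\sigma<\infty\}\in\F_{\sigma-}$. Localizing the martingale $p^x$ by a reducing sequence one obtains $\expec[p^x_\sigma\,|\,\F_{\sigma-}]=p^x_{\sigma-}$ on a positive-probability subset of $\{\sigma<\infty\}$; but on $\{\eta^x=\sigma<\infty\}\subseteq\{\zeta^x=\sigma<\infty\}$ one has $p^x_\sigma=0$, while $p^x_{\sigma-}=p^x_{\eta^x-}>0$ by the very definition of $\Lambda^x$ in~\eqref{eta-init}. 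The predictable section theorem then gives $\Delta D^x<1$ $\prob$-a.s., so $\Exp(-D^x)$ is strictly positive and nonincreasing. Part (2) is then just integration by parts: setting $I^x\dfn\indic_{\dbraco{\eta^x,\infty}}$, so that $I^x-D^x$ is a local martingale on $\basisp$, the identity
\[
\Exp(-D^x)^{-1}\,\indic_{\dbraco{0,\eta^x}}
=1-\int_0^\cdot \Exp(-D^x)_t^{-1}\,\ud(I^x-D^x)_t
\]
is verified exactly as in the proof of Lemma~\ref{lem: key-loc-mart}, and part (2) follows.

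The substantive part, and the main obstacle, is the joint measurability assertion in (3). The idea is to propagate measurability from the density $p$ all the way down to $\Exp(-D^x)$. First, the $\cE\otimes\cO(\FF)$-measurable and $t$-c\`adl\`ag version of $p$ given by Lemma~\ref{lem-Jac} makes $(x,\omega)\mapsto\zeta^x(\omega)$ jointly measurable, since $\{\zeta^x\le t\}=\{\inf_{s\in\QQ\cap[0,t]}p^x_s=0\}\cup\{p^x_t=0\}$; the left limit $p^x_{\zeta^x-}$ is then jointly measurable by approximation along rationals, so $\Lambda^x$, $\eta^x$, and hence $I^x=\indic_{\dbraco{\eta^x,\infty}}$ are jointly $\cE\otimes\cO(\FF)$-measurable. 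At this stage I would invoke \cite[Proposition~4]{SY}, which, under separability of $\Lb^1(\Omega,\F,\prob)$, produces a $\cE\otimes\cP(\FF)$-measurable version of the parameter-dependent dual predictable projection $(x,\omega,t)\mapsto D^x_t(\omega)$ of the jointly measurable increasing process $(I^x)_{x\in E}$. Finally, because $\Exp(-D^x)$ is a path-Borel functional of the predictable process $D^x$ (its explicit product formula involves only $D^x$, its continuous part, and its jumps), the measurability transfers, and one obtains the desired $\cE\otimes\cP(\FF)$-measurable version of $(x,\omega,t)\mapsto\Exp(-D^x)_t(\omega)$. The only delicate point is the application of the Stricker--Yor selection result to $I^x$; the rest is a bookkeeping exercise in measurable dependence on the parameter~$x$.
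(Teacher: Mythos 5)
Your proposal is correct and follows essentially the same route as the paper: part (1) is the same contradiction argument using that the conditional jump of the martingale $p^x$ at a predictable time vanishes while $p^x_{\eta^x-}>0$ on $\{\eta^x<\infty\}$ (followed by the predictable section theorem), part (2) is the same integration-by-parts identity borrowed from Lemma~\ref{lem: key-loc-mart}, and the measurability claim is obtained, as in the paper, from \cite[Proposition 4]{SY} (together with the remark after their Proposition 3) under the separability hypothesis, with the measurability then transferred to $\Exp(-D^x)$. Your extra preliminary step establishing joint measurability of $(x,\omega)\mapsto\eta^x(\omega)$ and of $\indic_{\dbraco{\eta^x,\infty}}$ is a harmless (indeed welcome) elaboration of a point the paper leaves implicit.
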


\begin{rem}\label{rmk:sep}
Note that separability of $\Lb^1 (\Omega, \F, \prob)$ is only needed to ensure that the collection $\pare{\Exp(-D^x)}_{x \in E}$ introduced in Lemma~\ref{compensator} admits a version with good measurability properties.
\end{rem}

\begin{proof}
Fix $x\in E$. For any $\FF$-predictable time $\sigma$, it holds that $\Delta D^x_{\sigma}=\PP\bra{\eta^x=\sigma|\F_{\sigma-}}$ on $\{\sigma<\infty\}$. As in the proof of Lemma~\ref{lem: key-loc-mart}, if the event $\{\PP\bra{\eta^x=\sigma<\infty|\F_{\sigma-}}=1\}$ has positive probability, one can find an predictable time $\tilde{\sigma}$ on $\basis$ such that $\PP\bra{\eta^x=\tilde{\sigma}<\infty}>0$ and $\{\eta^x=\tilde{\sigma}<\infty\}\in\F_{\tilde{\sigma}-}$. Then, by the martingale property of $p^x$ on $\basisp$ and the definition of $\eta^x$,
\[
0 = \EE\left[\Delta p^x_{\tilde{\sigma}}|\F_{\tilde{\sigma}-}\right]
= \EE\left[\Delta p^x_{\eta^x}|\F_{\tilde{\sigma}-}\right]
= -\EE\left[p^x_{\eta^x-}|\F_{\tilde{\sigma}-}\right],
\qquad\text{on } 
\{\eta^x=\tilde{\sigma}<\infty\}.
\]
In turn, since $p^x_{\eta^x-}>0$ holds on $\{\eta^x<\infty\}$, this implies that $\PP\bra{\eta^x=\tilde{\sigma}<\infty}=0$, thus leading to a contradiction and showing that $\PP\bra{\eta^x=\sigma<\infty|\F_{\sigma-}}<1$ holds in the $\PP$-a.s. sense for any predictable time $\sigma$ on $\basis$. Part (1) then follows by the predictable section theorem, while part (2) can be proved by relying on the same arguments used in the proof of Lemma~\ref{lem: key-loc-mart}.

Finally, since $\Lb^1(\Omega,\F,\PP)$ is assumed separable, \cite[Proposition 4]{SY} gives the existence of a $\cE\otimes\F\otimes\cB(\R_+)$-measurable function $(x,\omega,t)\mapsto D^x_t(\omega)$ such that, for all $x\in E$, $D^x$ is the predictable compensator of $\ind_{\lsi\eta^x,\infty\lsi}$ on $\basisp$. Due to \cite[Remark 1, after Proposition 3]{SY}, the function $D$ can also be chosen $\cE\otimes\cP(\FF)$-measurable and the same measurability property is inherited by the function $(x,\omega,t) \mapsto \Exp(-D^x)_t(\omega)$ (see also \cite[\S~12]{SY}).
\end{proof}

In order to establish our main results, we need to ensure that the collection $(\eta^x)_{x \in E}$ of stopping times on $\basis$ remains invariant under equivalent changes of measure, for $\gamma$-a.e. $x\in E$. 

\begin{lem}	\label{eta-inv}
Let $\QQ$ be a probability measure on $(\Omega,\F)$ with $\QQ\sim\PP$. For $x \in E$, let $\eta^{\QQ,x}$ be the stopping time on $\basis$ defined under $\QQ$ in analogy to $\eta^{\PP,x}:=\eta^x$ defined in \eqref{eta-init} under $\PP$. Then $\eta^{\QQ,x}=\eta^x$ holds almost surely (under both $\PP$ and $\QQ$) for $\gamma$-a.e. $x\in E$.
\end{lem}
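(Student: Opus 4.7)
The plan is to show that, for $\gamma$-a.e. $x\in E$, the $\QQ$-conditional density $p^{\QQ,x}$ (obtained by applying Lemma~\ref{lem-Jac} under $\QQ$, which satisfies Jacod's hypothesis because $\QQ\sim\PP$) agrees with $p^x$ up to multiplication by a strictly positive cadlag factor. Once this is done, the time-zero sets of $p^{\QQ,x}$ and $p^x$ coincide and their left limits vanish at the same instants, so $\zeta^{\QQ,x}=\zeta^x$ and $\Lambda^{\QQ,x}=\Lambda^x$ hold $\PP$-a.s.; hence $\eta^{\QQ,x}=\eta^x$ $\PP$-a.s. (equivalently $\QQ$-a.s., since $\PP\sim\QQ$) for $\gamma$-a.e. $x$.

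To produce such a factor, I would let $Y=(Y_t)_{\tir}$ be the strictly positive density process of $\QQ$ with respect to $\PP$, with $Y_\infty=d\QQ/d\PP$, and write $\gamma^{\QQ}$ for the law of $J$ under $\QQ$, so that $\gamma^{\QQ}\sim\gamma$ and $d\gamma^{\QQ}/d\gamma>0$ $\gamma$-a.s. For every $B\in\cE$ and $t\in\Real_+$, Bayes' rule gives
\[
\EE^{\PP}\bra{Y_\infty\indic_{\{J\in B\}}\mid \F_t}
= Y_t\,\QQ\bra{J\in B\mid\F_t}
= \int_B Y_t\, p^{\QQ,x}_t\, \frac{d\gamma^{\QQ}}{d\gamma}(x)\, \gamma(\ud x).
\]
On the other hand, conditioning first on $\F_t\vee\sigma(J)$ and then using \eqref{exp-init} yields
\[
\EE^{\PP}\bra{Y_\infty\indic_{\{J\in B\}}\mid \F_t}
= \int_B \hat Y^x_t\, p^x_t\, \gamma(\ud x),
\]
where $\hat Y^x_t$ is a jointly measurable version of $\EE^{\PP}\bra{Y_\infty\mid\F_t\vee\sigma(J)}\big|_{J=x}$. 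Varying $B\in\cE$ and invoking right-continuity in $t$, one deduces that for $\gamma$-a.e. $x$, up to $\PP$-evanescence,
\[
p^{\QQ,x}_t = \phi^x_t\, p^x_t, \qquad \phi^x_t:= \frac{\hat Y^x_t}{Y_t\,(d\gamma^{\QQ}/d\gamma)(x)}.
\]

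To close the argument, I would verify that $\phi^x$ is strictly positive with strictly positive left limits. Since $Y$ is strictly positive cadlag and $d\gamma^{\QQ}/d\gamma>0$ $\gamma$-a.s., only the positivity of $\hat Y^x$ is at stake. For $\gamma$-a.e. fixed $x$, $\hat Y^x$ is a nonnegative cadlag martingale on the filtration $(\F_t\vee\sigma(J))_{\tir}$ (a tower of conditional expectations); because $Y_\infty>0$ $\PP$-a.s., one has $\hat Y^x_t>0$ $\PP$-a.s. for every fixed $t$, hence simultaneously for all rational $t$. Since zero is absorbing for a nonnegative cadlag martingale, $\hat Y^x$ remains strictly positive (left limits included) at every $t\in\Real_+$, $\PP$-a.s.

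The main technical obstacle I anticipate is the construction of the jointly measurable cadlag version of $\hat Y^x$, or equivalently of the auxiliary density $q^x_t:=\hat Y^x_t\, p^x_t$ of the random measure $B\mapsto\EE^{\PP}\bra{Y_\infty\indic_{\{J\in B\}}\mid\F_t}$, together with its $\PP$-martingale property in $t$ for $\gamma$-a.e. $x$. This is precisely the disintegration performed in Lemma~\ref{lem-Jac}, applied now to $Y_\infty$ in place of $1$, and should go through by the same \cite[Proposition 4]{SY}-type measurable-selection argument used in the proof of that lemma.
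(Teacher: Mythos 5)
Your route is genuinely different from the paper's. The paper constructs no Bayes factor at all: it applies Lemma~\ref{lem-Jac} under $\QQ$ to get densities $q^x$, invokes \cite[Corollary 1.11]{Jac85} under $\QQ$ together with $\QQ\sim\PP$ to get $\PP\bra{q^J_t=0}=0$, feeds the indicator $\ind_{\{q^x_t=0\}}$ into the disintegration formula \eqref{exp-init} to obtain $\{q^x_t=0\}\subseteq\{p^x_t=0\}$ $\PP$-a.s.\ for $\gamma$-a.e.\ $x$, gets the reverse inclusion by exchanging the roles of $\PP$ and $\QQ$, and upgrades this to $\{q^x=0\}=\{p^x=0\}$ up to evanescence by right-continuity, from which $\eta^{\QQ,x}=\eta^x$ is read off. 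Your multiplicative factorisation $q^x=\phi^x p^x$ would be a more informative statement, but as written it has a genuine gap at exactly the step that matters.

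The gap is the positivity argument for $\hat Y^x$. For fixed $x$, the section $\hat Y^x$ of the $\GG^0$-martingale $\EE\bra{Y_\infty \mid \F_t\vee\sigma(J)}$ is \emph{not} a martingale under $\PP$, neither in $\bF$ nor in $(\F_t\vee\sigma(J))_{\tir}$; what the $\GG^0$-martingale property disintegrates into is that $\hat Y^x p^x$ is a martingale on $\basisp$ for $\gamma$-a.e.\ $x$ (equivalently, $Y q^x$ is, by Bayes). Moreover $\hat Y^x_t$ is only pinned down, for $\gamma$-a.e.\ $x$, up to $\PP$-null sets \emph{on} $\{p^x_t>0\}$: from $\EE\bra{Y_\infty \mid \F_t\vee\sigma(J)}>0$ and \eqref{exp-init} you can only deduce $\{\hat Y^x_t=0\}\subseteq\{p^x_t=0\}$ $\PP$-a.s., not that $\hat Y^x_t>0$ $\PP$-a.s. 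Consequently the ``zero is absorbing for a nonnegative martingale'' step collapses, and with it the strict positivity of the left limit $\hat Y^x_{\zeta^x-}$ --- which is precisely what you need, since equality of the zero sets only gives $\zeta^{\QQ,x}=\zeta^x$, while $\eta^x$ distinguishes a jump to zero ($p^x_{\zeta^x-}>0$) from a continuous decrease to zero. Your identity does settle one implication: since $Y_{\zeta^x-}>0$ and $(\ud\gamma^{\QQ}/\ud\gamma)(x)>0$ for $\gamma$-a.e.\ $x$, the equality $\hat Y^x_{\zeta^x-}p^x_{\zeta^x-}=Y_{\zeta^x-}q^x_{\zeta^x-}(\ud\gamma^{\QQ}/\ud\gamma)(x)$ shows $q^x_{\zeta^x-}>0 \Rightarrow p^x_{\zeta^x-}>0$. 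For the converse you cannot rely on $\hat Y^x_{\zeta^x-}>0$; instead, repeat your Bayes computation with $\PP$ and $\QQ$ interchanged (density process $1/Y$), which gives $\hat W^x q^x = (1/Y)\,p^x\,(\ud\gamma/\ud\gamma^{\QQ})(x)$ and hence $p^x_{\zeta^x-}>0 \Rightarrow q^x_{\zeta^x-}>0$. With that symmetric identity (or by falling back on the paper's shorter zero-set argument, which also avoids your measurable-version construction of $\hat Y^x$) the proof closes; as it stands, it does not.
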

\begin{proof}
As can be readily checked, Assumption \ref{ass:Jac_sep} is invariant under equivalent changes of probability. Hence, there exists a nonnegative $\cE\otimes\cO(\FF)$-measurable function $E \times \Omega \times \Real_+ \ni (x,\omega,t)\mapsto q_t^x(\omega)$ satisfying the properties of Lemma~\ref{lem-Jac} under $\QQ$. Moreover, due to \cite[Corollary 1.11]{Jac85} (now applied under the probability $\QQ$), it holds that $\QQ\bra{q^J_t=0}=0$ and also $\PP\bra{q^J_t=0}=0$, since $\QQ\sim\PP$, for all $t\in \Real_+$. Hence, by using formula \eqref{exp-init} applied to the $\cE\otimes\F_t$-measurable function $f^x_t=\ind_{\{q^x_t=0\}}$, for $t\in \Real_+$, we obtain
\[
0 = \PP\bra{q^J_t=0}
= \int_E\EE\left[\ind_{\{q^x_t=0\}}p^x_t\right]\gamma \bra{\ud x},
\qquad\text{for all }t\in \Real_+,
\]
so that $\{q^x_t=0\}\subseteq\{p^x_t=0\}$ $\PP$-a.s. for $\gamma$-a.e. $x\in E$. In a similar way, due to the symmetric role of $\PP$ and $\QQ$, one can show that $\{p^x_t=0\}\subseteq\{q^x_t=0\}$ holds $\QQ$-a.s. for $\gamma$-a.e. $x\in E$ and for all $t\in \Real_+$. By right-continuity, $\{q^x=0\}=\{p^x=0\}$ holds (up to evanescence), for $\gamma$-a.e. $x\in E$. Hence, by definition, $\eta^{\QQ,x}=\eta^x$ holds almost surely (under both $\PP$ and $\QQ$) for $\gamma$-a.e. $x\in E$.
\end{proof}

\subsection{Super/local martingales in the initially enlarged filtration} \label{subsec: enlarged_loc_marts_init}

The next result is a counterpart to Proposition~\ref{prop: loc_mart at random time} in the case of initially enlarged filtrations.  Recall that $\PP\bra{\zeta^J=\infty}=1$, as explained after \eqref{st-init}, so that the optional process $1/p^J$ on $\basisgp$ is well-defined.

\begin{prop}	\label{mart-init}
Let $X: E \times \Omega \times \Real_+ \mapsto \Real_+$ be $\cE\otimes\cO(\FF)$-measurable, and such that $X^x$ is \cadlag \ for every $x\in E$.
Then the following statements hold.
\begin{enumerate}
\item
If $X^x$ is a supermartingale on $\basisp$ for $\gamma$-a.e. $x\in E$, then, $X^J/p^J$ is a supermartingale on $\basisgp$.
\item
If $X^x$ is a local martingale on $\basisp$ and $\lsi\!\eta^x,\infty\!\lsi\subseteq\{X^x=0\}$ (modulo evanescence) hold for $\gamma$-a.e. $x\in E$, then, $X^J/p^J$ is a local martingale on $\basisgp$.
\end{enumerate}
\end{prop}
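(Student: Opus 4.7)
The plan is to reduce both statements to computations about the $\bF$-behaviour of $X^x$ via the disintegration formula \eqref{exp-init}, exploiting that $X^J/p^J$ is a well-defined $\bG$-adapted \cadlag \ process (since $\PP \bra{\zeta^J = \infty} = 1$). Throughout, I will test (super)martingale inequalities against the $\pi$-system of sets $F \cap \set{J \in B}$ with $F \in \F_s$ and $B \in \cE$, which generates $\bG^0 \dfn \pare{\F_t \vee \sigma(J)}_{\tir}$; the passage from $\bG^0$ to its right-continuous augmentation $\bG$ is then standard, using the \cadlag \ regularity of $X^J/p^J$.

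For part~(1), fix $s \leq t$, $F \in \F_s$ and $B \in \cE$. Applying \eqref{exp-init} to the $\pare{\cE \otimes \F_t}$-measurable function $(x, \omega) \mapsto \ind_F(\omega) \ind_B(x) \pare{X^x_t(\omega)/p^x_t(\omega)} \ind_{\set{p^x_t(\omega) > 0}}$ yields
\[
\EE \bra{\ind_{F \cap \set{J \in B}} \frac{X^J_t}{p^J_t}} = \int_B \EE \bra{\ind_F X^x_t \ind_{\set{p^x_t > 0}}} \gamma \bra{\ud x},
\]
where the $\PP$-a.s. equality $\ind_{\set{p^J_t > 0}} = 1$ was used on the left, and analogously at time $s$. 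Since $p^x$ is a nonnegative \cadlag \ $\bF$-martingale, $\set{p^x_t > 0} \subseteq \set{p^x_s > 0} \in \F_s$; combining with the supermartingale property of $X^x$ applied on the $\F_s$-set $F \cap \set{p^x_s > 0}$ gives, for $\gamma$-a.e. $x \in E$,
\[
\EE \bra{\ind_F X^x_t \ind_{\set{p^x_t > 0}}} \leq \EE \bra{\ind_F X^x_t \ind_{\set{p^x_s > 0}}} \leq \EE \bra{\ind_F X^x_s \ind_{\set{p^x_s > 0}}},
\]
and integration over $B$ against $\gamma$ delivers the required supermartingale inequality.

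For part~(2), the hypotheses imply $X^x$ is a nonnegative $\bF$-local martingale, hence an $\bF$-supermartingale, for $\gamma$-a.e. $x$, so part~(1) already yields that $X^J/p^J$ is a nonnegative $\bG$-supermartingale. To upgrade this to a local martingale, I will exhibit a sequence $(\rho^x_n)_{\nin}$ of $\bF$-stopping times, jointly $\pare{\cE \otimes \cP(\bF)}$-measurable in $(x, \omega)$ via the measurable-selection techniques of \cite[Proposition~4]{SY} (as in Lemma~\ref{lem-Jac} and Lemma~\ref{compensator}), satisfying $\rho^x_n \leq \zeta^x$ and such that $(X^x)^{\rho^x_n}$ is a uniformly integrable $\bF$-martingale; a natural choice is $\rho^x_n \dfn \nu^x_n \wedge \zeta^x_n$, where $(\nu^x_n)_{\nin}$ is a localising sequence for $X^x$. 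The crucial point is that on $\set{t \wedge \rho^x_n = \zeta^x}$ one necessarily has $\zeta^x_n = \zeta^x$, forcing $p^x_{\zeta^x -} \geq 1/n > 0$, hence $\eta^x = \zeta^x$, and the assumption $\dbraco{\eta^x, \infty} \subseteq \set{X^x = 0}$ then forces $X^x_{t \wedge \rho^x_n} = 0$; consequently, $X^x_{t \wedge \rho^x_n} \ind_{\set{p^x_{t \wedge \rho^x_n} > 0}} = X^x_{t \wedge \rho^x_n}$ $\PP$-a.s. Applying \eqref{exp-init} to the function $(x, \omega) \mapsto \pare{X^x_{t \wedge \rho^x_n(\omega)}(\omega)/p^x_{t \wedge \rho^x_n(\omega)}(\omega)} \ind_{\set{p^x_{t \wedge \rho^x_n(\omega)}(\omega) > 0}}$, invoking the tower property together with optional stopping for $p^x$ (so that $\EE \bra{p^x_t \such \F_{t \wedge \rho^x_n}} = p^x_{t \wedge \rho^x_n}$), and then optional stopping for $X^x$ at $\rho^x_n$, yields
\[
\EE \bra{\frac{X^J_{t \wedge \rho^J_n}}{p^J_{t \wedge \rho^J_n}}} = \int_E \EE \bra{X^x_{t \wedge \rho^x_n}} \gamma \bra{\ud x} = \int_E \EE \bra{X^x_0} \gamma \bra{\ud x} = \EE \bra{\frac{X^J_0}{p^J_0}}.
\]
Since $\rho^J_n \up \zeta^J = \infty$ $\PP$-a.s., this constant-expectation property, combined with the $\bG$-supermartingale property from part~(1), implies that $(X^J/p^J)^{\rho^J_n}$ is a $\bG$-martingale for every $n$, establishing the local martingale claim.

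The two main technical obstacles I anticipate are: first, producing the jointly $\pare{\cE \otimes \cP(\bF)}$-measurable localising family $(\rho^x_n)_{x \in E}$, which requires a measurable-selection argument adapting the constructions already used in Lemma~\ref{lem-Jac} and Lemma~\ref{compensator}; and second, bridging the $\bG^0$-supermartingale inequality (which the $\pi$-system argument produces directly) to the corresponding $\bG$-inequality, which is standard via \cadlag \ regularity but deserves explicit mention.
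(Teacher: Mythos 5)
Your proposal follows essentially the same route as the paper: part (1) is the same disintegration-plus-monotone-class computation via \eqref{exp-init} (your indicator $\ind_{\set{p^x_t>0}}$ playing the role of the paper's $\ind_{\set{\zeta^x>t}}$), and part (2) is the same reduction to a constant-expectation identity for a stopped process, resting on the same key observation that $p^x_{t\wedge\rho^x_n}=0$ with $t\wedge\rho^x_n\leq\zeta^x_n$ forces $p^x_{\zeta^x-}\geq 1/n$, hence $\eta^x=t\wedge\rho^x_n$ and $X^x_{t\wedge\rho^x_n}=0$, followed by ``nonnegative supermartingale with constant expectation is a martingale''.

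The one point where you genuinely depart from the paper is also the one unresolved step: the joint measurability of the localising family $(\nu^x_n)_{x\in E}$. Deferring this to a measurable-selection argument in the spirit of \cite[Proposition~4]{SY} is both shaky and costly. It is not clear that the Stricker--Yor results (which produce measurable versions of projections, compensators and the like) yield a jointly measurable choice of localising sequences for an \emph{abstract} parameterized family of local martingales; and, as in Lemma~\ref{compensator}, their Proposition~4 requires separability of $\Lb^1(\Omega,\F,\prob)$, a hypothesis that Proposition~\ref{mart-init} does not (and should not) carry, since it is invoked in places where separability is not assumed (e.g.\ Proposition~\ref{all-mart-init}, part (1) of Theorem~\ref{thm:gen_init}, Remark~\ref{rem:H_init}). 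The paper's resolution, which you should substitute for the selection argument, is to take the \emph{explicit} sequence $\nu^x_n:=\inf\set{t\in\R_+ \such X^x_t>n}$: first-entrance times localise any nonnegative local martingale (using that $X^x$ is a nonnegative supermartingale), and since $X$ and $p$ are $\cE\otimes\cO(\FF)$-measurable, the maps $(x,\omega)\mapsto \nu^x_n(\omega)\wedge\zeta^x_n(\omega)\wedge t$ and hence $(x,\omega)\mapsto X^x_{\nu^x_n\wedge\zeta^x_n\wedge t}(\omega)/p^x_{\nu^x_n\wedge\zeta^x_n\wedge t}(\omega)$ are $\cE\otimes\F_t$-measurable by composition, with no selection theorem and no separability (cf.\ \cite{SY}, Remark~1 after Theorem~2). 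This choice also makes $\rho^J_n=\nu^J_n\wedge\zeta^J_n$ manifestly a stopping time on $\basisg$ (first entrance times of the $\bG$-adapted \cadlag\ processes $X^J$ and $p^J$ in a right-continuous filtration), which you implicitly need when arguing that the stopped process inherits the supermartingale property from part (1). With this substitution, your argument coincides with the paper's proof.
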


\begin{proof}
We first prove part (1).
For any $s\leq t$, $A\in\F_s$ and  $h:(E,\cE)\rightarrow(\R_+,\cB_{\R_+})$, using the fact that $\prob \bra{\zeta^J=\infty} = 1$ together with formula \eqref{exp-init} (with $f_t(x)=\ind_{A\cap\{\zeta^x>t\}}\,g(x)X^x_t/p^x_t$) and the supermartingale property of $X^x$ on $\basisp$, for $\gamma$-a.e. $x\in E$, we obtain
\begin{align*}
\expec\left[\ind_A\,g(J)\frac{X_t^J}{p^J_t}\right]
&= \expec\left[\ind_{A\cap\{\zeta^J>t\}}g(J)\frac{X_t^J}{p^J_t}\right] \\
&= \int_Eg(x)\expec\left[\ind_{A\cap\{\zeta^x>t\}}X^x_t\right] \gamma [\ud x]	\\
&\leq \int_Eg(x)\expec\left[\ind_{A\cap\{\zeta^x>s\}}X^x_t\right] \gamma[\ud x] \\
&\leq \int_Eg(x)\expec\left[\ind_{A\cap\{\zeta^x>s\}}X^x_s\right] \gamma[\ud x]
= \expec\left[\ind_A\,g(J)\frac{X_s^J}{p^J_s}\right].
\end{align*}
By the monotone class theorem, this shows that $X^J/p^J$ is a supermartingale on $(\Omega, \, \GG^0, \, \prob)$. By right-continuity, this implies the supermartingale property on $\basisgp$, thus proving part (1).

To prove part (2), note first that, since $X^x$ is a nonnegative local martingale on $\basisp$, hence a supermartingale on $\basisp$, the sequence $\pare{\sigma^x_n}_{n\in\N}$ defined by $\sigma^x_n:=\inf\{t\in\R_+\,|\,X^x_t>n\}$ for $n \in \N$ is localising for $X^x$ on $\basisp$, for $\gamma$-a.e. $x\in E$. Moreover, since $X$ is $\cE\otimes\cO(\FF)$-measurable, the function $E \times \Omega \ni (x,\omega)\mapsto\sigma_n^x(\omega)\wedge t$ is $\cE\otimes\F_t$-measurable for all $t\in \Real_+$ and $n\in\N$, and, as a composition of measurable mappings, the function $E \times \Omega \ni (x,\omega)\mapsto X^x_{\sigma^x_n(\omega)\wedge t}(\omega)$ is also $\cE\otimes\F_t$-measurable, for all $t\in \Real_+$ and $n\in\N$ (compare also with \cite{SY}, Remark 1 after Theorem 2). Since $p$ is $\cE\otimes\cO(\FF)$-measurable (see Lemma~\ref{lem-Jac}), the same reasoning allows to show that the function $E \times \Omega \ni (x,\omega)\mapsto X^x_{\sigma^x_n(\omega)\wedge\zeta^x_n(\omega)\wedge t}(\omega)/p^x_{\sigma^x_n(\omega)\wedge\zeta^x_n(\omega)\wedge t}(\omega)$ is $\cE\otimes\F_t$-measurable for all $t\in \Real_+$ and $n\in\N$, where the stopping time $\zeta^x_n$ on $\basis$ is defined in \eqref{st-init}. Then, for any $t\geq0$,  formula \eqref{exp-init} gives
\[	\ba
\EE\left[\frac{X^J_{\sigma^J_n\wedge\zeta^J_n\wedge t}}{p^J_{\sigma^J_n\wedge\zeta^J_n\wedge t}}\right]
&= \int_E\EE\left[
\frac{X^x_{\sigma^x_n\wedge\zeta^x_n\wedge t}}{p^x_{\sigma^x_n\wedge\zeta^x_n\wedge t}}
\ind_{\left\{p^x_{\sigma^x_n\wedge\zeta^x_n\wedge t}>0\right\}}p^x_t\right]\gamma[\ud x]	\\
&= \int_E\EE\left[
X^x_{\sigma^x_n\wedge\zeta^x_n\wedge t}
\ind_{\left\{p^x_{\sigma^x_n\wedge\zeta^x_n\wedge t}>0\right\}}\right]\gamma[\ud x]	\\
&= \int_E\EE\left[
X^x_{\sigma^x_n\wedge\zeta^x_n\wedge t}\right]\gamma[\ud x]	\\
&= \int_E\EE\left[
X^x_0\right]\gamma[\ud x]	
= \EE\left[
\frac{X^J_0}{p^J_0}\right],
\ea	\]
where the second equality follows from the martingale property of $p^x$ on $\basisp$ for all $x\in E$, the third equality from the  fact that $\{p^x_{\sigma^x_n\wedge\zeta^x_n\wedge t}=0\}=\{\eta^x=\sigma^x_n\wedge\zeta^x_n\wedge t\}$ together with the assumption that $\lsi\!\eta^x,\infty\!\lsi\subseteq\{X^x=0\}$ for $\gamma$-a.e. $x\in E$, the fourth equality from the martingale property of $X^x_{\sigma^x_n\wedge\cdot}$ on $\basisp$, for $\gamma$-a.e. $x\in E$ and $n\in\N$, and the last equality from all the previous steps in reverse order. 
In turn, since by part (1) the process $(X^J/p^J)^{\sigma^J_n\wedge\zeta^J_n}$ is a supermartingale on $\basisgp$ for all $\nin$, this implies that $(X^J/p^J)^{\sigma^J_n\wedge\zeta^J_n}$ is a martingale on $\basisgp$ for all $\nin$.
Since $\prob \bra{ \limn \sigma^x_n = \infty } = 1$ holds for every $x\in E$, and $\PP\bra{\zeta^J=\infty}=1$, 
the sequence $\pare{\sigma^J_n\wedge\zeta^J_n}_{n\in\N}$ of stopping times on $\basisg$ satisfies $\prob \bra{\limn \pare{\sigma^J_n\wedge\zeta^J_n} = \infty} = 1$, thus proving that $X^J/p^J$ is a local martingale on $\basisgp$.
\end{proof}

A result analogous to part (1) of Proposition \ref{mart-init} has been recently established in \cite{ImkPer13} (see their Proposition 5.2). More specifically, according to their terminology, the process $1/p^J$ is a \emph{universal supermartingale density} for $\GG$. 

\begin{rem}\label{rem:H_init}
Proposition \ref{mart-init} can be used to establish that any semimartingale $X$ on $\basisp$ remains a semimartingale on $\basisgp$. As was the case in Corollary \ref{cor:H_prog}, it suffices to show the result whenever $X$ is a nonnegative and bounded local martingale, thus a supermartingale, on $\basisp$. By part (1) of Proposition \ref{mart-init}, the process $X / p^J$ is a a semimartingale on $\basisgp$; since also $1 / p^J$ is a strictly positive semimartingale on $\basisgp$, the result follows.
\end{rem}

We proceed with a result that is a ramification of Proposition \ref{mart-init} (this side result will not be used in other places). In the same spirit of Proposition \ref{mart-progr}, we can characterise the local martingale property of $X^J/p^J$ on $\basisgp$ for \emph{every} $\cE\otimes\cO(\FF)$-measurable non-negative function $X$ such that $X^x$ is a local martingale on $\basisp$ for $\gamma$-a.e. $x\in E$.

\begin{prop}	\label{all-mart-init}
The following statements are equivalent:
\begin{enumerate}
\item For any $X: E \times \Omega \times \Real_+ \mapsto \Real_+$ that is $\cE\otimes\cO(\FF)$-measurable, and such that $X^x$ is \cadlag \ for every $x\in E$, $X^x$ is a local martingale on $\basisp$ and $\lsi\!\eta^x,\infty\!\lsi\subseteq\{X^x=0\}$ (modulo evanescence) hold for $\gamma$-a.e. $x\in E$, the process $X^J/p^J$ is a local martingale on $\basisgp$.
\item The process $1/p^J$ is a local martingale on $\basisgp$.
\item $\prob\bra{\eta^x < \infty} = 0$ holds for $\gamma$-a.e. $x\in E$.
\end{enumerate}
\end{prop}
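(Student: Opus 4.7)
The plan is to prove the cyclic chain $(3) \Rightarrow (1) \Rightarrow (2) \Rightarrow (3)$, in close parallel with Proposition~\ref{mart-progr}. Under (3), $\eta^x = \infty$ almost surely for $\gamma$-a.e.\ $x\in E$, so the hypothesis $\lsi\eta^x,\infty\lsi \subseteq \{X^x = 0\}$ of (1) is vacuously satisfied and part~(2) of Proposition~\ref{mart-init} delivers the local martingale property of $X^J/p^J$ on $\basisgp$; this is $(3)\Rightarrow(1)$. The implication $(1)\Rightarrow(2)$ is obtained by specialising (1) to the constant process $X^x\equiv 1$, so that $X^J/p^J = 1/p^J$.

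The core new content is $(2)\Rightarrow(3)$. Fix $n\in\N$ and consider the $\bG$-stopping time $\nu^J_n$ from \eqref{st-init}. On $\dbraco{0,\nu^J_n}$ one has $p^J \geq 1/n$, hence $(1/p^J)_{t\wedge\nu^J_n} \leq n\vee(1/p^J)_{\nu^J_n}$ for every $t\geq 0$. Since $1/p^J$ is a nonnegative supermartingale on $\basisgp$ with initial value $1$ (by part~(1) of Proposition~\ref{mart-init} applied to $X^x\equiv 1$), optional stopping yields $\expec[(1/p^J)_{\nu^J_n}] \leq 1$; the family $\{(1/p^J)_{t\wedge\nu^J_n}:t\geq 0\}$ is therefore dominated by an integrable function and so uniformly integrable. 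Combined with the assumption that $1/p^J$ is a local martingale on $\basisgp$, this upgrades the stopped local martingale $(1/p^J)^{\nu^J_n}$ to a true martingale, giving $\expec[(1/p^J)_{t\wedge\nu^J_n}] = 1$ for every $t\geq 0$.

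The same expectation can be computed independently via \eqref{exp-init} with the $\cE\otimes\F_t$-measurable integrand $f^x_t = \indic_{\{p^x_{t\wedge\nu^x_n}>0\}}/p^x_{t\wedge\nu^x_n}$, followed by optional stopping for the $\bF$-martingale $p^x$ at $t\wedge\nu^x_n$:
\[
\expec\bra{(1/p^J)_{t\wedge\nu^J_n}} = \int_E \expec\bra{\frac{\indic_{\{p^x_{t\wedge\nu^x_n}>0\}}}{p^x_{t\wedge\nu^x_n}}\,p^x_t}\gamma[\ud x] = \int_E \PP\bra{p^x_{t\wedge\nu^x_n}>0}\gamma[\ud x].
\]
Since $\{p^x_{t\wedge\nu^x_n} = 0\} = \{\nu^x_n = \zeta^x \leq t\}$, equating the two expressions forces $\PP[\nu^x_n = \zeta^x \leq t] = 0$ for $\gamma$-a.e.\ $x\in E$ and every $n,t$. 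As $n\to\infty$, the events $\{\nu^x_n = \zeta^x \leq t\}$ increase to $\{\eta^x \leq t\}$: on $\Lambda^x$ one has $\nu^x_n = \zeta^x = \eta^x$ as soon as $n > 1/p^x_{\eta^x-}$, while on $(\Omega\setminus\Lambda^x)\cap\{\zeta^x<\infty\}$ the relation $p^x_{\zeta^x-} = 0$ forces $\nu^x_n < \zeta^x$ for every $n$. Sending first $n\to\infty$ and then $t\to\infty$ yields (3).

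The principal technical obstacle is the justification of the equality $\expec[(1/p^J)_{t\wedge\nu^J_n}] = 1$: since $p^J$ may jump downward across the threshold $1/n$ at $\nu^J_n$, producing an upward jump in $1/p^J$ of arbitrary magnitude, the stopped process $(1/p^J)^{\nu^J_n}$ is not a priori bounded, and the argument must instead leverage the supermartingale bound $\expec[(1/p^J)_{\nu^J_n}]\leq 1$ to produce an integrable dominating function and hence uniform integrability.
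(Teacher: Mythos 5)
Your overall route coincides with the paper's: (3)~$\Rightarrow$~(1) via part (2) of Proposition~\ref{mart-init}, (1)~$\Rightarrow$~(2) by specialising to the constant family, and (2)~$\Rightarrow$~(3) by stopping $1/p^J$ at the times $\zeta^J_n$ of \eqref{st-init} (your $\nu^J_n$), computing $\expec\bra{(1/p^J)_{t\wedge\zeta^J_n}}$ through \eqref{exp-init} and the $\bF$-martingale property of $p^x$, and then letting $n$ and $t$ tend to infinity. Your explicit justification that $(\zeta^J_n)_{\nin}$ is localising (the supermartingale bound $\expec\bra{(1/p^J)_{\zeta^J_n}}<\infty$ plus domination of the stopped process, upgrading the stopped local martingale to a true martingale) is precisely the argument the paper leaves implicit, and it is correct. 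One remark on (1)~$\Rightarrow$~(2): as literally stated, the requirement $\lsi\eta^x,\infty\lsi\subseteq\set{X^x=0}$ excludes $X\equiv1$ unless (3) already holds; the intended reading of (1) (parallel to Proposition~\ref{mart-progr}) is that it ranges over all nonnegative local-martingale families without that requirement, and your specialisation, like the paper's ``trivial'' implication, should be phrased in that sense.

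There is, however, one step of your (2)~$\Rightarrow$~(3) that fails as written: you take $(1/p^J)_0=1$, hence that the common value of the stopped expectations equals $1$, and from this you force $\PP\bra{\zeta^x_n=\zeta^x\le t}=0$ for $\gamma$-a.e.\ $x$. Nothing in the setting makes $\F_0$ trivial, and when it is not, $p^J_0\neq1$ in general: if, say, $J$ takes two values and is $\F_0$-measurable, then $p^J\equiv2$, $\expec\bra{(1/p^J)_{t\wedge\zeta^J_n}}=1/2$, and $\PP\bra{\zeta^x_n=\zeta^x\le t}=1/2>0$ for both atoms, even though conclusion (3) does hold in that example. The correct bookkeeping --- and this is exactly what the paper does --- is to compare with $\expec\bra{1/p^J_0}=\int_E\PP\bra{p^x_0>0}\gamma\bra{\ud x}$, use the inclusion $\set{p^x_{t\wedge\zeta^x_n}>0}\subseteq\set{p^x_0>0}$, and conclude only that $\set{p^x_0>0}\cap\set{p^x_{t\wedge\zeta^x_n}=0}$ is $\PP$-null for $\gamma$-a.e.\ $x$; since $\set{\eta^x<\infty}\subseteq\set{p^x_0>0}$ (if $p^x_0=0$ then $\zeta^x=0$ and $p^x_{\zeta^x-}=0$, so $\Lambda^x$ fails), your monotone passage $n\to\infty$, $t\to\infty$ then still yields (3). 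A second, minor imprecision: on $\Lambda^x$ the identity $\zeta^x_n=\zeta^x$ holds once $1/n$ is below $\inf_{s<\zeta^x}p^x_s$ (which is strictly positive there, because both $p^x$ and $p^x_-$ are strictly positive on $\dbraco{0,\zeta^x}$), not merely once $n>1/p^x_{\zeta^x-}$; this does not affect the identification $\bigcup_n\set{\zeta^x_n=\zeta^x\le t}=\set{\eta^x\le t}$.
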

\begin{proof}
Implication (1)~$\Rightarrow$~(2) is trivial, while (3)~$\Rightarrow$~(1) follows from part (2) of Proposition \ref{mart-init}.
In order to prove (2)~$\Rightarrow$~(3), note that the sequence $\pare{\zeta^J_n}_{\nin}$ of stopping times on $\basisg$ is localising for $1/p^J$ (see \eqref{st-init}), so that $\expec[1/p^J_{\zeta^J_n\wedge T}]=\expec[1/p^J_0]$, for any $T\in\Real_+$. Hence, due to formula \eqref{exp-init} applied first to the $\cE\otimes\F_0$-measurable function $E \times \Omega \ni (x,\omega)\mapsto\indic_{\{p^x_0(\omega)>0\}} \big( 1/p^x_0(\omega) \big)$ and then to the $\cE\otimes\F_t$-measurable function $E \times \Omega \ni (x,\omega)\mapsto\indic_{\{p^x_{\zeta^x_n\wedge T}>0\}} \big( 1/p^x_{\zeta^x_n\wedge T} \big)$,
\begin{align*}
\int_E\expec\left[\indic_{\{p^x_0>0\}}\right]\gamma(\ud x)
= \expec\left[\frac{1}{p^J_0}\right]
= \expec\left[\frac{1}{p^J_{\zeta^J_n\wedge T}}\right]
&= \int_E\expec\left[\frac{1}{p^x_{\zeta^x_n\wedge T}}\indic_{\{p^x_{\zeta^x_n\wedge T}>0\}}p^x_T\right]\gamma [\ud x]	\\
&= \int_E\expec\left[\indic_{\{p^x_{\zeta^x_n\wedge T}>0\}}\right]\gamma[\ud x],
\end{align*}
where in the last equality we have used the martingale property of $p^x$ on $\basisp$ for every $x\in E$.
This implies that $\{p^x_0>0\}\cap\{p^x_{\zeta^x_n\wedge T}=0\} = \emptyset$ holds (modulo evanescence) for $\gamma$-a.e. $x\in E$, for all $T\in\R_+$. Equivalently, it holds that $\prob\bra{\eta^x=\infty}=1$ for $\gamma$-a.e. $x\in E$.
\end{proof}

\subsection{Condition \naone\ in the initially enlarged filtration}
\label{subsec:proofs_initial}

In the spirit of Proposition~\ref{prop: suff_na1}, we can then establish a sufficient condition for the validity of \naone\ in the initially enlarged filtration $\GG$. The proof of the next proposition is a straightforward application of Proposition~\ref{mart-init}. The notation $\Y(\GG, S, \prob)$ is clear.

\begin{prop}	\label{NA1-init-prop}
Suppose that there exists a $\cE\otimes\cO(\FF)$-measurable function $M : E \times \Omega \times \Real_+ \mapsto \Real_+$ such that $M^x_0=1$ and $M^x$ is \cadlag, for every $x\in E$,  $M^x$ and $M^x S$ are local martingales on $\basisp$ and $\{M^x>0\}\subseteq\lsi\!0,\eta^x\!\lsi$ hold for $\gamma$-a.e. $x\in E$. Then, $M^J/p^J\in\Y(\GG, S, \prob)$.
\end{prop}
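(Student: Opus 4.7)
The plan is to derive both local martingale conclusions required for $M^J/p^J\in\Y(\GG,S,\prob)$ from two parallel applications of part~(2) of Proposition~\ref{mart-init}: the first with input $X=M$, giving that $M^J/p^J$ is a local martingale on $\basisgp$; the second, componentwise, with input $X^x=M^x S^i$ for each $\iii$, giving that $(M^J S^i)/p^J=(M^J/p^J)S^i$ is a local martingale on $\basisgp$. Together, these are precisely the local martingale properties demanded of a local martingale deflator for $S$ in $\bG$.

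To apply Proposition~\ref{mart-init}(2), I would verify its standing hypotheses for both choices. Measurability is automatic: $M$ is $\cE\otimes\cO(\FF)$-measurable by assumption, each $S^i$ is $\cO(\FF)$-measurable, and hence each product function $(x,\omega,t)\mapsto M^x_t(\omega)S^i_t(\omega)$ is $\cE\otimes\cO(\FF)$-measurable and \cadlag \ in $t$ for every $x\in E$. For the local martingale assumption, $M^x$ and $M^x S^i$ are both nonnegative local martingales on $\basisp$ for $\gamma$-a.e. $x\in E$ by hypothesis. For the vanishing requirement $\lsi\!\eta^x,\infty\!\lsi\subseteq\{X^x=0\}$, the case $X=M$ is a direct rephrasing of $\{M^x>0\}\subseteq\lsi\!0,\eta^x\!\lsi$, and the case $X=M S^i$ follows from the same inclusion together with $S^i\geq 0$, which yields $\{M^x S^i>0\}\subseteq\{M^x>0\}\subseteq\lsi\!0,\eta^x\!\lsi$.

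The remaining properties are routine to check. Adaptedness of $M^J/p^J$ to $\bG$ follows from the $\G_0$-measurability of $J$ together with the $\bF$-optionality of $M^x$ and $p^x$ for each $x$; the \cadlag \ property is inherited from the pathwise regularity of $M^x$ and $p^x$; the initial value $M^J_0/p^J_0=1$ comes from $M^x_0=1$ for every $x\in E$ together with $p^x_0=1$ for $\gamma$-a.e.\ $x$ (using that $\pj_0$ is the law of $J$); and $p^J>0$ on all of $\Real_+$ by the identity $\prob\bra{\zeta^J<\infty}=0$ recalled after \eqref{st-init}. I do not foresee any real obstacle: the substantive content is already contained in Proposition~\ref{mart-init}, and the present proposition reduces to the bookkeeping that lets one feed both $M$ and the products $M S^i$ into that result. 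The only point needing a little care is the transfer of measurability and of the vanishing condition on $\lsi\!\eta^x,\infty\!\lsi$ from $M$ to $M S^i$, which the nonnegativity and $\cO(\FF)$-measurability of $S^i$ provide at once.
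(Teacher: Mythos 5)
Your proposal is correct and matches the paper, which omits the proof precisely because it is (in the authors' words) a straightforward application of Proposition~\ref{mart-init}: one feeds $M$ and, componentwise, $M S^i$ into part (2) of that result exactly as you do, the nonnegativity and $\cO(\FF)$-measurability of $S$ handling the transfer of the measurability and vanishing conditions. The remaining bookkeeping you list (adaptedness, \cadlag\ paths, $p^J>0$ from $\prob\bra{\zeta^J<\infty}=0$, and the normalisation at time $0$) is the same routine verification the paper leaves implicit.
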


We are now in the position to prove our first main theorem in the  framework of initial filtration enlargement.

\begin{proof}[Proof of Theorem \ref{NA1-init-thm}]
We follow the proof of Theorem~\ref{thm: NA1_G} in the case of progressively enlarged filtrations.
In view of Theorem~\ref{thm: num_loc_mart} and Lemma~\ref{eta-inv}, we may assume without loss of generality the existence of a strictly positive $\widehat{X}\in\mathcal{X}(\FF,S)$ such that $Y:=1/\widehat{X}\in\mathcal{Y}(\FF,S,\PP)$. Since  $\PP\bra{\eta^x<\infty, \, \Delta S_{\eta^x}\neq0}=0$ holds for $\gamma$-a.e. $x\in E$, we obtain $\PP\bra{\eta^x<\infty, \, \Delta Y_{\eta^x}\neq0}=0$ and $\PP\bra{\eta^x<\infty,\, \Delta(YS)_{\eta^x}\neq0}=0$ for $\gamma$-a.e. $x\in E$. In the notation of Lemma~\ref{compensator}, define the function $E \times \Omega \times \Real_+ \ni (x,\omega,t)\mapsto M^x_t(\omega) \dfn Y_t(\omega)\Exp(-D^x)_t^{-1}(\omega)\ind_{\{\eta^x(\omega)>t\}}$. For all $x\in E$, the process $M^x$ is \cadlag\ and satisfies $M^x_0=1$ and $\{M^x>0\}=\dbraco{0,\eta^x}$. By part (2) of Lemma~\ref{compensator} and proceeding as in the proof of Theorem~\ref{thm: NA1_G}, it can be shown that $M^x$ and $M^xS$ are local martingales on $\basisp$ for $\gamma$-a.e. $x\in E$. Moreover, due to the separability of $\Lb^1(\Omega,\F,\PP)$, Lemma~\ref{compensator} shows that $\Exp(-D)$ admits a $\cE\otimes\cP(\FF)$-measurable version. Since $\cP(\FF)\subseteq\cO(\FF)$, the conclusion then follows from Proposition~\ref{NA1-init-prop}.
\end{proof}

Finally, we provide the proof of our last main result.

\begin{proof}[Proof of Theorem \ref{thm:gen_init}]
Statement (1) follows directly from Theorem~\ref{NA1-init-thm}, by Remark~\ref{rmk:sep} and since $\PP\bra{\eta^x < \infty} = 0$ implies that $D^x$ is indistinguishable from $1$.
We proceed with the proof of statement (2). Due to Lemma \ref{compensator}, the function $E \times \Omega \times \Real_+ \ni (x,\omega,t)\mapsto S^x_t(\omega) \dfn \Exp( D^x)^{-1}_t(\omega)\indic_{\{\eta^x(\omega)>t\}}$ is $\cE\otimes\cP(\FF)$-measurable, and, therefore, also $\cE\otimes\cO(\FF)$-measurable. Moreover, for all $x\in E$, $S^x$ is a local martingale on $\basisp$. Recall that $\prob \bra{\eta^J=\zeta^J=\infty} = 1$ (see \S~\ref{subsec:main_res_init}), so that the process $S^J$ is nondecreasing. Moreover, using in sequence formula \eqref{exp-init}, integration by parts and the properties of predictable compensators, we get, for any $T\in(0,\infty)$,
\[
\expec\left[D^J_T\right]
= \int_E\expec\left[D^x_Tq^x_T\right] \gamma[\ud x]
= \int_E\expec\left[\int_{(0,T]}q^x_{t-}dD^x_t\right]\gamma[\ud x]
= \int_E\expec\left[q^x_{\eta^x-}\indic_{\{\eta^x\leq T\}}\right]\gamma[\ud x].
\]
Hence, if $\int_E \PP\bra{\eta^x < \infty} \gamma \bra{\ud x} > 0$, then $\PP\bra{D^J_T>0}>0$ holds for some $T\in(0,\infty)$, which implies that $\prob \bra{S^J_t = S^J_0, \, \forall \tir} < 1$.
\end{proof}

Note that, in view of Proposition \ref{all-mart-init}, the \naone\ stability (in the sense of Theorem \ref{thm:gen_init}) in the enlarged filtration $\GG$ is also equivalent to the local martingale property of  $1/p^J$ on $\basisgp$.

\begin{rem}	\label{rem:NA1_init_takaoka}
We want to point out that, analogously to Remark~\ref{rem:NA1_progr_takaoka}, Proposition~\ref{all-mart-init} allows to give a direct proof of statement (1) of Theorem~\ref{thm:gen_init}. Indeed, in view of~\cite[Theorem 2.6]{Tak_13}, \naone$(\bF,S)$ is equivalent to the existence of a process $Y\in\Yfp$. Due to Proposition~\ref{all-mart-init}, if $\prob\bra{\eta^x<\infty}=0$ holds for $\gamma$-a.e. $x\in E$, then $Y/p^J$ and $(Y/p^J)S$ are local martingales on $\basisgp$, meaning that $Y/p^J\in\Y(\GG, S, \prob)$. \cite[Theorem 2.6]{Tak_13} then implies that \naone$(\bG,S)$ holds.
However, as for the case of progressive enlargement, this argument fails to provide a direct proof of Theorem~\ref{NA1-init-thm} (compare with Remark~\ref{rem:NA1_progr_takaoka}).
\end{rem}

\bibliographystyle{alpha}
\bibliography{AFK_na1}
\end{document}